\documentclass[10pt,reqno]{amsart}

\usepackage[dvipsnames]{xcolor}
\usepackage{amsmath,amssymb,amsthm,amsfonts,enumerate,tikz,bm}
\usepackage{mathrsfs}
\usetikzlibrary{matrix,arrows,positioning,calc}
\usepackage[all]{xy}
\usepackage[bookmarksnumbered,colorlinks]{hyperref}
\usepackage{url}
\numberwithin{equation}{section}
\usepackage{graphicx}
\usepackage[T1]{fontenc}
\usepackage{textcomp}
\usepackage{palatino,helvet}

\usepackage{footmisc}

\newtheorem{definition}{Definition}[section]
\newtheorem{remark}[definition]{Remark}
\newtheorem{example}[definition]{Example}
\newtheorem{theorem}[definition]{Theorem}
\newtheorem{proposition}[definition]{Proposition}
\newtheorem{lemma}[definition]{Lemma}
\newtheorem{corollary}[definition]{Corollary}
\theoremstyle{remark}

\usepackage{paralist}

\usepackage{scrextend}
\deffootnote{2em}{0em}{\thefootnotemark\quad}


\newcommand{\mcA}{\mathcal{A}}
\newcommand{\mcB}{\mathcal{B}}

\newcommand{\mcI}{\mathcal{I}}

\newcommand{\mcP}{\mathcal{P}}

\newcommand{\mcX}{\mathcal{X}}

\newcommand{\mcY}{\mathcal{Y}}
\newcommand{\mcZ}{\mathcal{Z}}
\newcommand{\mcGP}{\mathcal{GP}}

\newcommand{\mcGI}{\mathcal{GI}}

\newcommand{\mfC}{\mathfrak{C}}
\newcommand{\mfF}{\mathfrak{F}}

\newcommand{\mfGF}{\mathfrak{GF}}
\newcommand{\mfGC}{\mathfrak{GC}}


\newcommand{\Mod}{\mathsf{Mod}}

\newcommand{\Ch}{\mathsf{Ch}}
\newcommand{\Qcoh}{\mathfrak{Qcoh}}

\newcommand{\LPRes}{\mathsf{p.l.res}}
\newcommand{\RPRes}{\mathsf{p.r.res}}


\newcommand{\resdim}{{\rm l.resdim}}
\newcommand{\coresdim}{{\rm r.resdim}}
\newcommand{\pd}{{\rm pd}}
\newcommand{\id}{{\rm id}}


\newcommand{\Hom}{{\rm Hom}}
\newcommand{\Ext}{{\rm Ext}}
\newcommand{\Hin}{\mathcal{H}om}
\newcommand{\Exin}{\mathcal{E}xt}


\newcommand{\Ker}{{\rm Ker}}
\newcommand{\Coker}{{\rm CoKer}}

\makeatletter
\def\@seccntformat#1{%
  \protect\textup{\protect\@secnumfont
    \ifnum\pdfstrcmp{section}{#1}=0 \scshape\bfseries\fi
    \ifnum\pdfstrcmp{subsection}{#1}=0 \bfseries\fi
    \csname the#1\endcsname
    \protect\@secnumpunct
  }%
}
\makeatother


\begin{document}

\title{Balanced systems for Hom}
\thanks{2020 MSC: Primary: 18G10, 18F20. Secondary: 16E30, 16E65. }
\thanks{Key Words: (finite) balanced systems; balanced pairs; relative Ext-bifunctors}

\author{V\'ictor Becerril}
\address[V. Becerril]{Centro de Ciencias Matem\'aticas. Universidad Nacional Aut\'onoma de M\'exico. 
 58089. Morelia, Michoac\'an, M\'EXICO}
\email{victorbecerril@matmor.unam.mx}

\author{Octavio Mendoza}
\address[O. Mendoza]{Instituto de Matem\'aticas. Universidad Nacional Aut\'onoma de M\'exico. Circuito Exterior, Ciudad Universitaria. CP04510. Mexico City, MEXICO}
\email{omendoza@matem.unam.mx}

\author{Marco A. P\'erez}
\address[M. A. P\'erez]{Instituto de Matem\'atica y Estad\'istica ``Prof. Ing. Rafael Laguardia''. Facultad de Ingenier\'ia. Universidad de la Rep\'ublica. CP11300. Montevideo, URUGUAY}
\email{mperez@fing.edu.uy}

\maketitle

\begin{abstract}
From the notion of generator and its dual in Auslander-Buchweitz approximation theory, we present the concept of finite balanced system as a tool to induce balanced pairs for Hom bifunctors with domain determined by classes of objects with finite left and right resolution dimension relative to these pairs.

This approach to balance of Hom will cover several well known ambients where its right derived functors are obtained relative to certain classes of objects in abelian categories, such as Gorenstein projective and injective modules and chain complexes, Gorenstein modules relative to Auslander and Bass classes, among others. 
\end{abstract}


\pagestyle{myheadings}
\markboth{\rightline {\scriptsize V. Becerril, O. Mendoza and M. A. P\'{e}rez}}
         {\leftline{\scriptsize Balanced systems for Hom}}


\section*{\textbf{Introduction}}

Given an associative ring $R$ with identity, and two left $R$-modules $M$ and $N$, a well known result from basic homological algebra states that the abelian group of $i$-fold extensions $\Ext^i_R(M,N)$, with $i > 0$ a positive integer, can be computed from an exact left projective resolution of $M$ or from an exact right injective resolution of $N$. This is a consequence of the fact that the bifunctor $\Hom_R(-,\sim)$ is (right) balanced on the product category $\Mod(R) \times \Mod(R)$ by the subcategory $\mathcal{P}(R) \times \mathcal{I}(R)$, where $\Mod(R)$ denotes the category of left $R$-modules, and $\mathcal{P}(R)$ and $\mathcal{I}(R)$ the classes of projective and injective left $R$-modules, respectively. Using the terminology of Chen \cite[Def. 1.1]{Chen}, one can summarize the previous by saying that $(\mathcal{P}(R),\mathcal{I}(R))$ is a \emph{balanced pair}. Specifically, two classes $\mcX$ and $\mcY$ of left $R$-modules form a balanced pair if:
\begin{enumerate}
\item $\mcX$ is precovering and $\mcY$ is preenveloping.

\item Every left $R$-module has a left resolution by objects in $\mcX$ that is exact after applying $\Hom_R(-,Y)$ for every $Y \in \mcY$.

\item Every left $R$-module has a right resolution by objects in $\mcY$ that is exact after applying $\Hom_R(X,-)$ for every $X \in \mcX$.
\end{enumerate}

This notion of balance has turned out to be key in the theory of derived functors, with connections to relative homological algebra. For example, it has been proved by Enochs and Jenda's \cite[Thm. 12.1.4]{EnJen00} that if $R$ is an Iwanaga-Gorenstein ring, then $\Hom_{R}(-,\sim)$ is balanced on $\Mod(R) \times \Mod(R)$ by $\mcGP(R) \times \mcGI(R)$, where $\mcGP(R)$ and $\mcGI(R)$ denote the classes of Gorenstein projective and Gorenstein injective left $R$-modules. In other words, $(\mathcal{GP}(R),\mathcal{GI}(R))$ is a balanced pair. This result was later generalized by Holm \cite[Thm. 3.6]{Holm05} for arbitrary rings, in the sense that $\Hom_R(-,\sim)$ is balanced by $\mcGP(R) \times \mcGI(R)$ on $\mcGP(R)^\wedge \times \mcGI(R)^{\vee}$, the product subcategory of $\Mod(R) \times \Mod(R)$ formed by the pairs $(M,N)$ where $M$ has finite Gorenstein projective dimension and $N$ has finite Gorenstein injective dimension. More specifically, conditions (1), (2) and (3) above can be restated in the following way:
\begin{enumerate}[(a)]
\item Every left $R$-module in $\mathcal{GP}(R)^\wedge$ (that is, with finite Gorenstein projective dimension) has a Gorenstein projective precover, and every left $R$-module in $\mathcal{GI}(R)^\vee$ has a Gorenstein injective preenvelope. More specifically, one can find for objects in $\mathcal{GP}(R)^\wedge$ Gorenstein projective epic precovers with kernel in $\mathcal{P}(R)^\wedge$, and for objects in $\mathcal{GI}(R)^\vee$ Gorenstein injective monic preenvelopes with cokernel in $\mathcal{I}(R)^\vee$.

\item Every left $R$-module in $\mathcal{GP}(R)^\wedge$ has an exact left Gorenstein projective resolution which remains exact after applying $\Hom_R(-,N)$ with $N \in \mathcal{GI}(R)$.

\item Every left $R$-module in $\mathcal{GI}(R)^\vee$ has an exact right Gorenstein injective resolution which remains exact after applying $\Hom_R(M,-)$ with $M \in \mathcal{GP}(R)$.
\end{enumerate}
So in some sense, the concept of balanced pair becomes \emph{relative} to the subcategory $\mathcal{GP}(R)^\wedge \times \mathcal{GI}(R)^\vee$. One important aspect to point out is that conditions (b) and (c) are based on Enochs and Jenda's \cite[Lem. 1.3]{EnochsJenda95}, which asserts that $\Ext^i_R(M,N) = 0$ for every $M \in \mathcal{P}(R)^\wedge$ and $N \in \mathcal{GI}(R)$ (or for every $M \in \mathcal{GP}(R)$ and $N \in \mathcal{I}(R)^\vee$). 

Motivated by the previous example, we aim to present a relativization of the notion of balanced pair. This relativization, called \emph{finite balanced system}, will be formed by a quadruple $[(\mcX,\omega);(\nu,\mcY)]$ of classes of objects $\mcX$, $\mcY$, $\omega$ and $\nu$ in an abelian category $\mcA$ which somehow mimic conditions (a), (b) and (c) above, along with \cite[Lem. 1.3]{EnochsJenda95}. Finite balanced systems will cover situations of balance of $\Hom_R(-,\sim)$ that cannot be approached using only the theory of balanced pairs. More specifically, the interplay between the classes in a finite balanced system $[(\mcX,\omega);(\nu,\mcY)]$ will comprise sufficient conditions so that $(\mcX,\mcY)$ is a balanced pair with respect to $(\mcX^\wedge,\mcY^\vee)$ (as in the mentioned example formed by Gorenstein projective and Gorenstein injective modules over an arbitrary ring). On the other hand, $\omega$ and $\nu$ will act as auxiliary classes for $\mcX$ and $\mcY$, in the sense that every object $X \in \mcX$ can be embedded into an object $W \in \omega$ such that $W / X \in \mcX$, and that for every object $Y \in \mcY$ there is an epimorphism $V \twoheadrightarrow Y$ with kernel in $\mcY$ (keep in mind the definitions of Gorenstein projective and Gorenstein injective left $R$-modules). In other words, $\mcY$ (resp., $\mcX$) is equipped with a (co)generating class $\nu$ (resp., $\omega$). These, along with some Ext-orthogonality conditions, namely
\[
\Ext^1_{\mcA}(\mcX^\wedge,\nu) = \Ext^1_{\mcA}(\mcX,\nu^\vee) = \Ext^1_{\mcA}(\omega,\mcY^\vee) = \Ext^1_{\mcA}(\omega^\wedge,\mcY) = 0
\] 
(which mimic \cite[Lem. 1.3]{EnochsJenda95}), will allow us to construct relative derived groups $\Ext^i_{(\mcX,\mcY)}(M,N)$ for every pair $(M,N) \in \mcX^\wedge \times \mcY^\vee$, by either taking:
\begin{itemize}
\item a left resolution of $M$ by objects in $\mcX$ which is exact after applying the (contravariant) functor  $\Hom_{\mcA}(-,Y)$ for every $Y \in \mcY$; or 
\item a right resolution of $N$ by objects in $\mcY$ which is exact after applying the (covariant) functor $\Hom_{\mcA}(X,-)$ for every $X \in \mcX$.
\end{itemize} 
In our setting, the constructions of such left and right resolutions will come from Auslander-Buchweitz approximation theory, where the notions of relative injective cogenerator and relative projective generator are essential. 

This paper is organized as follows. In Section \ref{sec:prelims} we recall some preliminary notions from relative homological algebra, such as relative homological dimensions, left and right approximations, cotorsion pairs and relative (co)generators. Section \ref{sec:rel_Ext_functors} is devoted to recall the concepts of proper resolutions and relative derived functors of $\Hom_{\mcA}(-,\sim)$, along with some of their properties. The main results of this paper are presented in Section \ref{sec:main}. First, we present in Definition \ref{def:balanced_pair} the concept of balanced pairs $(\mcX,\mcY)$ relative to a pair $(\tilde{\mcX},\tilde{\mcY})$ of classes of objects in $\mcA$. This is a relativization of the original concept proposed by Chen in \cite{Chen}. These pairs are sources for relative derived functors of $\Hom_{\mcA}(-,\sim)$ with domain in $\tilde{\mcX} \times \tilde{\mcY}$ (see Proposition \ref{CriterioBalance}). Later, in Definition \ref{balanced-couple} we define balanced systems $[(\mcX,\omega); (\nu,\mcY)]$ of pairs of classes of objects in $\mcA$ with respect to a couple $[(\mcX',\mcX''); (\mcY'',\mcY')]$. These collect sufficient conditions for $(\mcX,\mcY)$ to be an admissible balanced pair relative to $(\overline{\mathsf{L}}_\mcX(\mcX',\mcX''),\overline{\mathsf{R}}^{\mcY}(\mcY'',\mcY'))$ (see Proposition \ref{tripleKer-ex}), where $\overline{\mathsf{L}}_\mcX(\mcX',\mcX'')$ is formed by the objects in $\mcX$ or those $M \in \mcX''$ admitting a proper left $\mcX$-resolution with cycles in $\mcX'$ (and $\overline{\mathsf{R}}^{\mcY}(\mcY'',\mcY')$ is defined dually). We also provide examples of balanced systems in relative Gorenstein homological algebra, and provide equivalences that characterize the balance of Hom by relative Gorenstein modules via the existence of certain balanced systems. Later on we shall focus on a particular type of balanced systems called finite and strong finite (Definition \ref{finite-balanced-system}). We give some basic examples and characterizations for these concepts, which will help us to construct more elaborated examples and applications in Section \ref{sec:applications}. We show how to induce, from a strong finite balanced system in the ground category $\mcA$, a balanced system of the same sort in the category of chain complexes of objects in $\mcA$. This method will bring to the category of chain complexes over a ring the examples of balance of Hom by the classes of Gorenstein (Ding) projective and injective modules. We also recover a result of Sather-Wagstaff, Sharif and White \cite{SeanWhite10} concerning balance of Hom by the classes of $C$-Gorenstein projective and $\Hom(C,D)$-Gorenstein injective modules over a commutative Cohen-Macaulay ring with a dualizing module $D$ and a semidualizing module $C$. Other situations of balance regarding Gorenstein flat quasi-coherent sheaves and virtually Gorenstein rings are analyzed as well.


\section{Preliminaries}\label{sec:prelims}

Throughout, $\mcA$ always denotes an abelian category (not necessarily with enough projective or injective objects). Monomorphisms and epimorphisms in $\mcA$ may be denoted by $\rightarrowtail$ and $\twoheadrightarrow$, respectively. Classes of objects in $\mcA$ are always assumed to be closed under isomorphisms. The set of natural numbers, denoted by $\mathbb{N}$, will be used as a set of indexes in several constructions. In this paper, $0 \in \mathbb{N}$, and by $\mathbb{N}^\ast$ we shall mean the set of all natural numbers excluding $0$.

Given two objects $M, N \in \mcA$ and $n \in \mathbb{N}$, the Yoneda abelian group of $n$-fold extensions of $N$ by $M$ is denoted by $\Ext^n_{\mcA}(M,N)$. For $n = 0$, $\Ext^0_{\mcA}(M,N)$ is the abelian group $\Hom_{\mcA}(M,N)$ of morphisms from $M$ to $N$. Given $\mcX, \mcY \subseteq \mcA$ two classes of objects of $\mathcal{A}$, the notation $\Ext^n_{\mcA}(\mcX,N) = 0$ means that $\Ext^n_{\mcA}(X,N) = 0$ for every $X \in \mcX$. The equality $\Ext^n_{\mcA}(M,\mcY) = 0$ has a similar meaning. Finally, $\Ext^n_{\mcA}(\mcX,\mcY) = 0$ means that $\Ext^n_{\mcA}(X,Y) = 0$ for every $X \in \mcX$ and $Y \in \mcY$. In the case where $\Ext^i_{\mcA}(\mcX,\mcY) = 0$ for every $i > n$, we write $\Ext^{\geq n+1}_{\mcA}(\mcX,\mcY) = 0$.


\subsection*{Relative projective and injective dimensions}

Let $\mcX \subseteq \mcA$ and $M \in \mcA$. The \emph{projective dimension of $M$ relative to $\mcX$} is defined as
\[
\pd_{\mcX}(M) := \inf \{ n \in \mathbb{N} \mbox{{\rm \ : \ }} \Ext_{\mcA}^{\geq n+1}(M,\mcX) = 0 \}.
\]
In the case where $\{ n \in \mathbb{N} \mbox{{\rm \ : \ }} \Ext_{\mcA}^{\geq n+1}(M,\mcX) = 0 \} = \emptyset$, we set $\pd_{\mcX}(M) = \infty$. Dually, we have the \emph{injective dimension of $M$ relative to $\mcX$}, denote by $\id_{\mcX}(M)$. Furthermore, for any $\mcY \subseteq \mcA$, we set 
\[
\pd_{\mcX}(\mcY) := \mathrm{sup}\,\{\pd_{\mcX}(Y) \text{ : } Y \in \mcY \},
\] 
and $\id_{\mcX}(\mcY)$ is defined similarly. Moreover, these two dimensions are related by the equality $\pd_{\mcX}(\mcY) = \id_{\mcY}(\mcX)$. If $\mcX = \mcA$, we simply write $\pd(M)$ and $\pd(\mcY)$ for the (absolute) projective dimensions of $M$ and $\mcY$, and similarly for $\id(M)$ and $\id(\mcX)$ if $\mcY = \mcA$. We denote by 
\[
\mcP(\mcA) := \{ M \in \mathcal{A} \mbox{{\rm \ : \ }} \pd(M) = 0 \} \text{ \ and \ } \mcI(\mcA) := \{ M \in \mathcal{A} \mbox{{\rm \ : \ }} \id(M) = 0 \}
\]
the classes of projective and injective objects of $\mcA$.


\subsection*{Orthogonal complements}

For each $i \in \mathbb{N}^\ast$ we consider the right  orthogonal classes
\[
\mcX^{\perp_i}:=\{M \in \mcA \text{ : } \Ext^i_{\mcA}(\mcX,M) = 0 \} \quad {\rm and} \quad  \mcX^\perp := \bigcap_{i\in \mathbb{N}^\ast}\,\mcX^{\perp_i}.
\] 
Dually, we have the left orthogonal classes ${}^{\perp_i}\mcX$ and ${}^{\perp}\mcX$.


\subsection*{Relative resolutions} 

Given a class $\mcX \subseteq \mcA$ and an object $M \in \mcA$, a \emph{left $\mcX$-resolution of $M$} is a (not necessarily exact) complex 
\[
\cdots \to X_1 \to X_0 \xrightarrow{\varepsilon} M \to 0
\] 
where $X_k \in \mcX$ for every $k \in \mathbb{N}$. The truncated complex $\cdots \to X_1 \to X_0$ will be denoted by $\mcX_\bullet(M)$. The morphism $\varepsilon$ is called the \emph{augmentation map} of the resolution. Thus, we may sometimes denote left $\mcX$-resolutions as $\varepsilon \colon \mcX_\bullet(M) \to M$.

The \emph{left $\mcX$-resolution dimension of $M$}, denoted $\resdim_{\mcX}(M)$, is defined as the infimum of the set
\[
\{ n \in \mathbb{N} \mbox{{\rm \ : \ }} M \text{ admits an exact left $\mcX$-resolution with $X_k = 0$ for every $k > n$} \}.
\]
Again, if the previous set is empty, we set $\resdim_{\mcX}(M) = \infty$. \emph{Right $\mcX$-resolutions} and \emph{right $\mcX$-resolution dimensions} of objects, denoted $\coresdim_{\mcX}(M)$, are defined dually. Given $\mcY \subseteq \mcA$, we set 
\[
\resdim_{\mcX}(\mcY) := \mathrm{sup}\{ \resdim_{\mcX}(Y) \text{ : } Y \in \mcY \},
\] 
and $\coresdim_{\mcX}(\mcY)$ is defined similarly.

We shall often consider the following classes of objects in $\mcA$:
\begin{align*}
\mcX^\wedge_n & := \{ M \in \mcA \mbox{{\rm \ : \ }} \resdim_{\mcX}(M) \leq n \}, & \mcX^\wedge & := \bigcup_{n \in \mathbb{N}} \mcX^\wedge_n, \\
\mcX^\vee_n & := \{ M \in \mcA \mbox{{\rm \ : \ }} \coresdim_{\mcX}(M) \leq n \}, & \mcX^\vee & := \bigcup_{n \in \mathbb{N}} \mcX^\vee_n.
\end{align*}
In some references, $\resdim_{\mcX}(-)$ and $\coresdim_{\mcX}(-)$ are also known as resolution and coresolution dimensions relative to $\mcX$.


\subsection*{Approximations} 

Given a class $\mcX \subseteq \mcA$, a morphism $f \colon X \rightarrow M$ in $\mcA$ with $X \in \mcX$ is an \emph{$\mcX$-precover of $M$} if $\Hom_{\mcA}(X', f) \colon \Hom_{\mcA}(X',X) \to \Hom_{\mcA}(X',M)$ is surjective for every $X' \in \mcX$. Moreover, $f$ is said to be \emph{special} if $\Coker(f) = 0$ and $\Ker(f) \in \mcX^{\perp_1}$. We shall freely make use of the dual notions of \emph{$\mcX$-preenvelope} and \emph{special $\mcX$-preenvelope}. The class $\mcX$ is \emph{precovering} if every object of $\mcA$ has a $\mcX$-precover. \emph{Special precovering}, \emph{preenveloping} and \emph{special preenveloping} classes are defined similarly.


\subsection*{Relative (co)generators} 

Let $(\mcX,\omega)$ be a pair of classes of objects in $\mcA$. We recall that $\omega$ is a \emph{relative quasi-cogenerator} in $\mcX$ if for every $X \in \mcX$ there is a short exact sequence $X \rightarrowtail W \twoheadrightarrow X'$ such that $W \in \omega$ and $X' \in \mcX$. If in addition $\omega \subseteq \mcX$, then $\omega$ is said to be a \emph{relative cogenerator} in $\mcX$. We also recall from Becerril, Mendoza and Santiago's \cite{BMS} that $\omega$ is a \emph{generator} in $\mcX$ if for any $X\in\mcX$ there is an epimorphism $W\twoheadrightarrow X$ with $W\in\omega$.\footnote{Not to be confused with the usual terminology of generator in category theory, where one asks that $\omega$ is a set and $X$ is the epimorphic image of a coproduct of objects in $\omega$.} Dually, we have the notions of \emph{relative} (\emph{quasi}) \emph{generators}. For example, $\mcI(\mcA)$ (resp., $\mcP(\mcA)$) is a relative cogenerator (resp., generator) in $\mcA$ if, and only if, $\mcA$ has enough injective (resp., projective) objects. Other well known examples can be found in the category $\Mod(R)$ of left $R$-modules. In particular, the class $\mathcal{P}(R)$ (resp., $\mathcal{I}(R)$) of projective (resp., injective) left $R$-modules is both relative generating and relative cogenerating in the class $\mathcal{GP}(R)$ (resp., $\mathcal{GI}(R)$) of Gorenstein projective (resp., Gorenstein injective) left $R$-modules.


\section{Relative extension bifunctors}\label{sec:rel_Ext_functors}

In this section we recall how to define relative right derived functors from Hom. Let $\mcX \subseteq \mcA$ be a class of objects of $\mcA$. We recall that a chain complex
\[
A_\bullet = \cdots \to A_1 \to A_0 \to A_{-1} \to \cdots 
\]
in $\mcA$ is \emph{$\Hom_{\mcA}(\mcX,-)$-acyclic} if the induced complex of abelian groups
\[
\Hom_{\mcA}(X,A_\bullet) = \cdots \to \Hom_{\mcA}(X,A_1) \to \Hom_{\mcA}(X,A_0) \to \Hom_{\mcA}(X,A_{-1}) \to \cdots
\]
is exact for every $X \in \mcX$. Similarly, we have the concept of \emph{$\Hom_{\mcA}(-,\mcX)$-acyclic} complexes. This type of acyclicity can be characterized for bounded complexes as shown in Lemma \ref{ex-xyres} below. Given a bounded below complex 
\[
M_{\bullet \geq -1} \equiv \cdots \to M_2 \xrightarrow{d_2} M_1 \xrightarrow{d_1} M_0 \xrightarrow{d_0} M_{-1} \to 0
\] 
of objects in $\mcA$, for each $i \in \mathbb{N}$ let $\mu_i \colon Z_i(M_\bullet^+) \rightarrowtail M_i$ denote the kernel of the differential $d_i \colon M_i \to M_{i-1}$. Since $d_{i-1} \circ d_i = 0$, there exists a unique morphism $d'_i \colon M_i \to Z_{i-1}(M_{\bullet \geq -1})$ such that $d_i = \mu_{i-1} \circ d'_i$. By taking $d'_0 := d_0$ and $Z_{-1}(M_{\bullet \geq -1}) = M_{-1}$, we have that the complex $M_{\bullet \geq -1}$ produces for every $i \in \mathbb{N}$ the short complex 
\[
\eta_i \equiv Z_i(M_{\bullet \geq -1}) \overset{\mu_i}\rightarrowtail M_i \xrightarrow{d'_i} Z_{i-1}(M_{\bullet \geq -1}).
\]

The statement of the following result comes from the arguments appearing in \cite[Proof of Lem. 2.4]{Chen}, and its proof follows by standard arguments.

\begin{lemma}[acyclicity condition for Hom]\label{ex-xyres} 
The complex $M_{\bullet \geq -1}$ is $\Hom_{\mcA}(\mcX,-)$-acyclic if, and only if, $\eta_i$ is $\Hom_{\mcA}(\mcX,-)$-acyclic for every $i \in \mathbb{N}$. Moreover, if any of these conditions holds true and for every $j \in \mathbb{N} \cup \{ -1\}$ there is an epimorphism $X_j \to Z_j(M_{\bullet \geq -1})$ with $X_j \in \mcX$, then $M_{\bullet \geq -1}$ is exact. 
\end{lemma}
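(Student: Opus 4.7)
The plan is to analyze how the differentials $d_i$ factor through the cycle objects via $d_i=\mu_{i-1}\circ d'_i$, and to convert $\Hom_{\mcA}(X,-)$-acyclicity of $M_{\bullet \geq -1}$ into exactness data at each $Z_i$. The equivalence will amount to the usual splicing of a long exact sequence into short complexes, but since we only have $\Hom_{\mcA}(X,-)$-acyclicity (not exactness of $M_{\bullet \geq -1}$ itself), we have to produce the lifts by hand. Throughout, fix an arbitrary $X \in \mcX$ and apply $\Hom_{\mcA}(X,-)$; since this functor is left exact, injectivity of $\Hom_{\mcA}(X,\mu_i)$ is automatic, and the arguments reduce to checking middle exactness and surjectivity of $\Hom_{\mcA}(X,d'_i)$.

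For the implication $(\Rightarrow)$, I would check the two nontrivial conditions for $\eta_i$. Middle exactness follows by chasing: $f\in\Ker\Hom_{\mcA}(X,d'_i)$ is equivalent to $d_i\circ f=\mu_{i-1}\circ d'_i\circ f=0$, which by the assumed acyclicity means $f=d_{i+1}\circ g=\mu_i\circ(d'_{i+1}\circ g)\in\Ima\Hom_{\mcA}(X,\mu_i)$; the reverse inclusion comes from $d'_i\circ\mu_i=0$ (a consequence of $d_i\circ\mu_i=0$ and $\mu_{i-1}$ being a monomorphism). Surjectivity of $\Hom_{\mcA}(X,d'_i)$ is obtained by lifting: given $g\colon X\to Z_{i-1}(M_{\bullet \geq -1})$, one has $d_{i-1}\circ\mu_{i-1}\circ g=0$, so by assumption $\mu_{i-1}\circ g=d_i\circ h=\mu_{i-1}\circ d'_i\circ h$ for some $h$, and cancelling the monomorphism $\mu_{i-1}$ gives $g=d'_i\circ h$. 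The case $i=0$ uses the convention $\mu_{-1}=\mathrm{id}$, so acyclicity at the augmentation position produces surjectivity of $\Hom_{\mcA}(X,d'_0)=\Hom_{\mcA}(X,d_0)$.

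For $(\Leftarrow)$, I would reverse the computation: assuming each $\eta_i$ yields a short exact sequence after $\Hom_{\mcA}(X,-)$, factor $\Hom_{\mcA}(X,d_i)=\Hom_{\mcA}(X,\mu_{i-1})\circ\Hom_{\mcA}(X,d'_i)$. Using injectivity of $\Hom_{\mcA}(X,\mu_{i-1})$ one gets $\Ker\Hom_{\mcA}(X,d_i)=\Ker\Hom_{\mcA}(X,d'_i)=\Ima\Hom_{\mcA}(X,\mu_i)$, while surjectivity of $\Hom_{\mcA}(X,d'_{i+1})$ gives $\Ima\Hom_{\mcA}(X,d_{i+1})=\Hom_{\mcA}(X,\mu_i)\bigl(\Hom_{\mcA}(X,Z_i(M_{\bullet \geq -1}))\bigr)=\Ima\Hom_{\mcA}(X,\mu_i)$, so the two agree and $\Hom_{\mcA}(X,M_{\bullet \geq -1})$ is exact at $\Hom_{\mcA}(X,M_i)$. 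Exactness at $\Hom_{\mcA}(X,M_{-1})$ is just surjectivity of $\Hom_{\mcA}(X,d_0)=\Hom_{\mcA}(X,d'_0)$ coming from $\eta_0$.

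For the moreover part, the decisive step is to show $d'_i$ is an epimorphism in $\mcA$ for every $i\geq 0$ (since $\Ima\,d_i=\mu_{i-1}(\Ima\,d'_i)$, this yields $\Ima\,d_i=\Ima\,\mu_{i-1}=Z_{i-1}(M_{\bullet \geq -1})=\Ker\,d_{i-1}$, giving exactness at $M_{i-1}$ for all $i\geq 0$, including the augmentation). To force $d'_i$ epi, I would apply the already-established surjectivity of $\Hom_{\mcA}(X_{i-1},d'_i)$ to the epimorphism $p_{i-1}\colon X_{i-1}\twoheadrightarrow Z_{i-1}(M_{\bullet \geq -1})$: there is $h\colon X_{i-1}\to M_i$ with $d'_i\circ h=p_{i-1}$, and since $p_{i-1}$ is epi, so is $d'_i$. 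The only delicate point to keep an eye on is the boundary case $i=0$ (so as not to misuse the convention $Z_{-1}(M_{\bullet \geq -1})=M_{-1}$ and $d'_0=d_0$), which is what ultimately yields surjectivity of the augmentation $d_0$; this is the one spot where indexing must be handled with care, but no genuine obstacle arises.
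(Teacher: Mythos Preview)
Your argument is correct and is precisely the ``standard argument'' the paper alludes to (the paper does not actually supply a proof, only cites \cite[Proof of Lem.~2.4]{Chen} and leaves the details to the reader). Your splicing via the factorization $d_i=\mu_{i-1}\circ d'_i$, the use of left exactness of $\Hom_{\mcA}(X,-)$ for injectivity of $\Hom_{\mcA}(X,\mu_i)$, and the lifting argument to force each $d'_i$ epi from the given epimorphisms $X_{i-1}\twoheadrightarrow Z_{i-1}(M_{\bullet\geq -1})$ are exactly the expected steps, and your handling of the boundary case $i=0$ is fine.
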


\begin{definition}
A left (resp., right) $\mcX$-resolution is called \textbf{proper} if it is $\Hom_{\mcA}(\mcX,-)$-acyclic (resp., $\Hom_{\mcA}(-,\mcX)$-acyclic). The classes of objects in $\mcA$ admitting a proper left and a proper right $\mcX$-resolution will be denoted by $\LPRes(\mcX)$ and $\RPRes(\mcX)$, respectively.
\end{definition}

\begin{remark}
One can use the previous lemma to provide an alternative description of the class $\LPRes(\mcX)$ in the case where $\mcX$ is a generator in $\mcA.$ This description is stated below in terms of relative Gorenstein objects in the sense of \cite[Def. 3.1]{BMS}. Specifically, $M \in \mcA$ is \textbf{weak $\bm{\mcX}$-Gorenstein injective} if $M \in \mcX^\perp$ and if there exists an exact left $\mcX$-resolution $\varepsilon \colon \mcX_\bullet(M) \to M$ with ${\rm Ker}(\varepsilon) \in \mcX^\perp$ and $Z_i(\mcX_\bullet(M)) \in \mcX^\perp$ for every $i \in \mathbb{N}$. So if $\mathcal{WGI}_{\mcX}(\mcA)$ denotes the class of weak $\mcX$-Gorenstein injective objects in $\mcA$, then one has that 
\[
\LPRes(\mcX) = \mathcal{WGI}_{\mcX}(\mcA).
\]
provided that $\mcX$ is a generator in $\mcA$ and $\pd_{\mcX}(\mcX) = 0$. One example of this situation is obtained by setting $\mcX = \omega$ satisfying $\Ext^{\geq 1}_{\mcA}(\omega,\omega) = 0$, where $\mathcal{WGI}_{\omega}(\mcA)$ coincides with the class of dual Cohen-Macaulay objects relative to $\omega$ (see Beligiannis and Reiten's \cite[pp. 95]{BeligiannisReiten}). 
\end{remark}

The following result is basically the one that appears in \cite[Lem. 8.2.1]{EnJen00}, but we have removed the condition that the given class $\mcX \subseteq \mcA$ is precovering. Its proof is similar to the mentioned reference and uses Lemma \ref{ex-xyres}.

\begin{lemma}[relative horseshoe lemma]\label{HorseshoeL}
Let $\mcX \subseteq \mcA$ be a class of objects closed under finite coproducts, and 
\[
\mathbb{M} \equiv 0 \to M ' \xrightarrow{\alpha} M \xrightarrow{\beta} M'' \to 0
\] 
be a $\Hom_{\mcA}(\mcX,-)$-acyclic complex. If $\varepsilon' \colon \mcX_\bullet(M') \to M'$ and $\varepsilon'' \colon \mcX_\bullet(M'') \to M''$ are proper left $\mcX$-resolutions, then there is a proper left $\mcX$-resolution $\varepsilon \colon \mcX_\bullet(M) \to M$ and a degreewise split exact sequence of complexes 
\[
0 \to \mcX_\bullet(M') \xrightarrow{\mcX_\bullet(\alpha)} \mcX_\bullet(M)\xrightarrow{\mcX_\bullet(\beta)} \mcX_\bullet(M'') \to 0
\]
such that $\varepsilon \circ [\mcX_\bullet(\alpha)]_0 = \alpha \circ \varepsilon'$ and $\varepsilon'' \circ [\mcX_\bullet(\beta)]_0 = \beta \circ \varepsilon$, where $\mcX_\bullet(\alpha)$ and $\mcX_\bullet(\beta)$ are the chain maps induced from $\alpha$ and $\beta$ in the usual way.
\end{lemma}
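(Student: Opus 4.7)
The plan is to build the proper left $\mcX$-resolution of $M$ inductively, mirroring the classical Horseshoe Lemma but with the additional burden of propagating $\Hom_{\mcA}(\mcX,-)$-acyclicity at every stage. The key tool is Lemma \ref{ex-xyres}, which reduces the properness of a left $\mcX$-resolution to the $\Hom_{\mcA}(\mcX,-)$-acyclicity of each one of its short complexes $\eta_i$. Thus the whole argument boils down to producing, degree by degree, a $3\times 3$ commutative diagram whose middle column is such an $\eta_i$ and whose $\Hom_{\mcA}(X,-)$-image has the right exactness properties.

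For the base step I set $X_0 := \mcX_0(M')\oplus\mcX_0(M'')$, which belongs to $\mcX$ by closure under finite coproducts. I take $\overline{\alpha}_0$ and $\overline{\beta}_0$ to be the canonical inclusion and projection, so that $0 \to \mcX_0(M') \to X_0 \to \mcX_0(M'') \to 0$ is split exact. The augmentation $\varepsilon \colon X_0 \to M$ is defined as $\alpha \circ \varepsilon'$ on the first summand, while on the second summand I choose a lift $\gamma_0 \colon \mcX_0(M'') \to M$ of $\varepsilon''$ through $\beta$; such a lift exists because $\mathbb{M}$ is $\Hom_{\mcA}(\mcX,-)$-acyclic and $\mcX_0(M'')\in\mcX$. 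The compatibility relations $\varepsilon\circ\overline{\alpha}_0 = \alpha\circ\varepsilon'$ and $\varepsilon''\circ\overline{\beta}_0 = \beta\circ\varepsilon$ then hold by construction, and the snake lemma applied to the resulting diagram (with split exact middle row and short exact bottom row $\mathbb{M}$) yields that $\varepsilon$ is an epimorphism whose kernel $K_0$ fits into a short exact sequence $0 \to K'_0 \to K_0 \to K''_0 \to 0$, where $K'_0 := \Ker(\varepsilon')$ and $K''_0 := \Ker(\varepsilon'')$.

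The crucial step is to verify that this kernel sequence and the short complex $\eta_0 \equiv K_0 \rightarrowtail X_0 \to M$ are both $\Hom_{\mcA}(\mcX,-)$-acyclic. Given $X \in \mcX$, I apply $\Hom_{\mcA}(X,-)$ to the $3\times 3$ commutative diagram whose bottom row is $\mathbb{M}$, whose middle row is the split exact sequence of degree-zero terms, whose top row is the new kernel sequence, and whose outer columns are the short complexes $\eta'_0$ and $\eta''_0$. By Lemma \ref{ex-xyres} the outer columns become short exact after applying $\Hom_{\mcA}(X,-)$; the middle row stays split exact, and the bottom row becomes short exact by hypothesis on $\mathbb{M}$. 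A short diagram chase shows that the middle column also becomes short exact after $\Hom_{\mcA}(X,-)$: any morphism $X \to M$ is first lifted through the right column and then corrected, via the left column, to a morphism $X \to X_0$. The classical nine lemma, applied with all three columns and two rows now short exact, then forces the top row to be short exact as well.

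To close the induction I observe that the truncated complexes $\cdots \to \mcX_2(M')\to \mcX_1(M')\to K'_0$ and $\cdots\to \mcX_2(M'')\to \mcX_1(M'')\to K''_0$ are again proper left $\mcX$-resolutions, since their short complexes are those of $\mcX_\bullet(M')$ and $\mcX_\bullet(M'')$ in degrees $i\geq 1$, so Lemma \ref{ex-xyres} applies. Combined with the $\Hom_{\mcA}(\mcX,-)$-acyclicity of $0 \to K'_0 \to K_0 \to K''_0 \to 0$, this puts me in the same situation as at the start with $M,M',M''$ replaced by $K_0,K'_0,K''_0$, so the construction can be iterated: at each stage I obtain $X_n := \mcX_n(M')\oplus \mcX_n(M'')$ together with a compatible differential to $X_{n-1}$ and a new kernel triple. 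This produces the chain map with the degreewise split exact sequence claimed in the statement, and by another application of Lemma \ref{ex-xyres} the resulting augmentation $\varepsilon \colon \mcX_\bullet(M) \to M$ is a proper left $\mcX$-resolution. I expect the main obstacle to lie in the acyclicity transfer isolated in the third paragraph; once that diagram chase is in place, the remainder is standard Horseshoe-style bookkeeping.
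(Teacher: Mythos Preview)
Your approach is exactly what the paper intends: the standard Horseshoe construction combined with Lemma~\ref{ex-xyres} to propagate $\Hom_{\mcA}(\mcX,-)$-acyclicity degree by degree, which is precisely how the paper says the proof goes (it refers to \cite[Lem.~8.2.1]{EnJen00} and Lemma~\ref{ex-xyres}). The diagram chase in your third paragraph and the inductive step in the fourth are correct.

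There is one slip in the second paragraph: you invoke the snake lemma with ``short exact bottom row $\mathbb{M}$'' and conclude that $\varepsilon$ is an epimorphism and that the kernel sequence $0 \to K'_0 \to K_0 \to K''_0 \to 0$ is short exact. Neither is justified: the hypothesis does \emph{not} say $\mathbb{M}$ is exact, only $\Hom_{\mcA}(\mcX,-)$-acyclic (the paper stresses exactly this point in the paragraph following the lemma), and the given proper resolutions $\varepsilon'$, $\varepsilon''$ need not be surjective either. Fortunately you never actually use these claims: all you need is that the induced maps $K'_0 \to K_0 \to K''_0$ exist and form a complex, and that both $\eta_0$ and this kernel complex become short exact after applying $\Hom_{\mcA}(X,-)$ for each $X\in\mcX$---and that is precisely what your third paragraph establishes via the nine lemma. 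Simply drop the exactness assertions in paragraph two and the argument is clean.
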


In Holm's \cite[Lem. 1.7]{Holm} a similar result is stated but with the assumption that the sequence $\mathbb{M}$ is exact. After a careful revision of this reference, and using Lemma \ref{ex-xyres}, one can note that exactness is not needed. Another result form \cite{Holm} concerning proper left resolutions is the following comparison lemma in the category of left $R$-modules, but its proof carries over to any abelian category.

\begin{lemma}[uniqueness up to homotopy]\label{ComparisonT}
Let $\mcX \subseteq \mcA$ be a class of objects, and $\varepsilon \colon \mcX_\bullet(M) \to M$ and $\phi \colon \mcX_\bullet(N) \to N$ be left $\mcX$-resolutions of $M$ and $N$, where $\phi$ is proper. Then, for any morphism $f \colon M \to N$ in $\mcA$, there is a chain map 
\[
\mcX_\bullet(f) \colon \mcX_\bullet(M) \to \mcX_\bullet(N)
\]
such that $f \circ \varepsilon = \phi \circ [\mcX_\bullet(f)]_0$. Moreover, $\mcX_\bullet(f)$ is unique up to chain homotopy.
\end{lemma}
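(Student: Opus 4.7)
My plan is to mimic the classical comparison theorem for projective resolutions, with the properness hypothesis on $\phi$ substituting for projectivity. The only ingredient I will use is the following: since each component $X_n := \mcX_n(M)$ lies in $\mcX$, the $\Hom_{\mcA}(\mcX,-)$-acyclicity of the augmented complex $\mcX_\bullet(N) \to N \to 0$ guarantees that applying $\Hom_{\mcA}(X_n,-)$ to it produces an exact complex of abelian groups. Every lift constructed below will be obtained by a routine diagram chase through this exact sequence.

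\textbf{Existence by induction on $n$.} For $n = 0$, the morphism $f \circ \varepsilon \colon X_0 \to N$ lies in the image of $\Hom_{\mcA}(X_0, \phi)$ by exactness at the augmentation spot; I take $\overline{f}_0$ to be any preimage, so that $\phi \circ \overline{f}_0 = f \circ \varepsilon$. Assuming $\overline{f}_0, \ldots, \overline{f}_{n-1}$ have been constructed making the relevant squares commute, I will first verify that $\overline{f}_{n-1} \circ d_n^M$ is annihilated by the next differential of $\mcX_\bullet(N)$: for $n \geq 2$ this uses the inductive square at level $n-1$ together with $d_{n-1}^M \circ d_n^M = 0$, while for $n = 1$ it reduces to $\phi \circ \overline{f}_0 \circ d_1^M = f \circ \varepsilon \circ d_1^M = 0$. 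Hence $\overline{f}_{n-1} \circ d_n^M$ belongs to the kernel of $\Hom_{\mcA}(X_n, d_{n-1}^N)$, which by acyclicity coincides with the image of $\Hom_{\mcA}(X_n, d_n^N)$; any preimage gives the required $\overline{f}_n$.

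\textbf{Uniqueness up to homotopy.} Given two lifts $\overline{f}$ and $\overline{g}$ of $f$, I will set $h := \overline{f} - \overline{g}$, so that $\phi \circ h_0 = 0$ and $h$ is a chain map over the zero morphism on $N$. I will construct a null-homotopy $s_n \colon X_n \to \mcX_{n+1}(N)$ satisfying $h_n = d_{n+1}^N \circ s_n + s_{n-1} \circ d_n^M$ by induction on $n \geq 0$, with $s_{-1} := 0$. For $n = 0$, acyclicity places $h_0$ in the image of $\Hom_{\mcA}(X_0, d_1^N)$, yielding $s_0$. In the inductive step, the critical verification is that $h_n - s_{n-1} \circ d_n^M$ is annihilated by $d_n^N$: this follows by combining the chain-map identity $d_n^N \circ h_n = h_{n-1} \circ d_n^M$ with the homotopy relation at level $n-1$ and the fact that $d_{n-1}^M \circ d_n^M = 0$. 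Acyclicity then produces $s_n$ as a preimage.

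\textbf{Expected obstacle.} There is no genuinely deep step; the whole argument is a two-pass lifting sweep. The only delicate moment is the cycle check for $h_n - s_{n-1} \circ d_n^M$ in the uniqueness half, which must be carried out carefully before invoking acyclicity. Everything else is a direct translation of the standard projective-resolution comparison theorem to the relative setting, with the properness of $\phi$ taking the place of enough projectives inside $\mcX$.
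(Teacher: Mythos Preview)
Your argument is correct and is precisely the standard comparison-theorem proof that the paper has in mind: the paper itself does not spell out a proof but defers to Holm's argument, noting only that exactness of the resolutions is unnecessary because $\Hom_{\mcA}(\mcX,-)$-acyclicity of the proper resolution $\phi$ suffices for each lifting step. Your write-up makes exactly this point explicit, so there is nothing to add.
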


The previous statement is slightly more general than the one appearing in \cite{Holm}, where the resolutions are assumed to be exact. Actually, for the construction of $\mcX_\bullet(f)$ and chain homotopies, exactness is not needed but $\Hom_{\mcA}(\mcX,-)$-acyclicity. 

Having recalled all the previous properties for proper resolutions, one can define left and right derived functors of $\Hom_{\mcA}(-,\sim) \colon \mcA^{\rm op} \times \mcA \longrightarrow \Mod(\mathbb{Z})$ as follows. Let $N \in \mcA$ and $i \in \mathbb{N}$. By \cite[Prop. 1.4.13]{EnJen00} and Lemma \ref{ComparisonT}, it can be shown that there is a well defined (contravariant) functor
\[
\underline{\Ext}^i_{\mcX}(-,N) \colon \LPRes(\mcX)^{\rm op} \longrightarrow \Mod(\mathbb{Z}),
\]
where for every $M \in \LPRes(\mcX)$,
\begin{align*}
\underline{\Ext}^i_{\mcX}(M,N) & := H^i({\rm Hom}_{\mcA}(\mcX_\bullet(M),N))
\end{align*}
is the $i$-th cohomology group of the complex ${\rm Hom}_{\mcA}(\mcX_\bullet(M),N)$, that is $\underline{\Ext}^i_{\mcX}(-,N)$ is the right derived functor of $\Hom_{\mcA}(-,N)$ with respect to $\mcX$.  

The following proposition lists several properties of $\underline{\Ext}^i_{\mcX}(-,N)$. It will be useful to recall a special type of proper resolutions.

\begin{definition}
A proper left $\mathcal{X}$-resolution $\varepsilon \colon \mcX_\bullet(M) \to M$ is \textbf{admissible} if the sequence 
\[
X_1 \to X_0 \xrightarrow{\varepsilon} M \to 0
\] 
is exact. Admissible proper right resolutions are defined dually. 
\end{definition}

\begin{proposition}[properties of relative extension bifunctors]\label{ExtUL} 
Let $\mcX \subseteq \mcA$ be a class of objects in $\mcA$. Then, the following assertions hold true:
\begin{enumerate}
\item $\underline{\Ext}^{\geq 1}_{\mcX} (\mcX,- ) = 0$.

\item If $\mcZ \subseteq \LPRes(\mcX)$ is a class of objects admitting an admissible proper left $\mcX$-resolution, then there is a natural isomorphism 
\[
\underline{\Ext}^0_{\mcX}(Z,N) \cong \Hom _{\mcA}(Z,N)
\] 
for every $Z \in \mcZ$.
\end{enumerate}
Moreover, for any $\Hom_{\mcA}(\mcX,-)$-acyclic complex
\[
\mathbb{M} \equiv 0 \to M ' \to M \to M'' \to 0,
\]
the following also hold:
\begin{enumerate}
\setcounter{enumi}{2}
\item If $\mcX$ is closed under finite coproducts and $M', M'' \in \LPRes(\mcX)$, then for every object $N \in \mathcal{A}$ there is a long exact sequence of abelian groups:
\begin{align*}
{} & \underline{\Ext}^0_{\mcX}(M'',N) \rightarrowtail \underline{\Ext}^0_{\mcX}(M,N) \to \underline{\Ext}^0_{\mcX}(M',N) \to \underline{\Ext}^1_{\mcX}(M'',N) \to \cdots \\ 
\cdots \to & \mbox{ } \underline{\Ext}^i_{\mcX}(M'',N) \to \underline{\Ext}^i_{\mcX}(M,N) \to \underline{\Ext}^i_{\mcX}(M',N) \to \underline{\Ext}^{i+1}_{\mcX}(M'',N) \to \cdots.
\end{align*}

\item If $L \in \LPRes (\mcX)$, then there is a long exact sequence of abelian groups:
\begin{align*}
{} & \underline{\Ext}^0_{\mcX}(L,M') \rightarrowtail \underline{\Ext}^0_{\mcX}(L,M) \to \underline{\Ext}^0_{\mcX}(L,M'') \to \underline{\Ext}^1_{\mcX}(L,M') \to \cdots \\ 
\cdots \to & \mbox{ } \underline{\Ext}^i_{\mcX}(L,M') \to \underline{\Ext}^i_{\mcX}(L,M) \to \underline{\Ext}^i_{\mcX}(L,M'') \to \underline{\Ext}^{i+1}_{\mcX}(L,M') \to \cdots.
\end{align*}
\end{enumerate}
\end{proposition}

\begin{proof}
For part (1), it suffices to consider for each $X \in \mcX$ the proper and exact left $\mcX$-resolution $\cdots \xrightarrow{0} X \xrightarrow{{\rm id}} X  \xrightarrow{0} X \xrightarrow{{\rm id}} X \to 0$, while part (2) is straightforward from the definition of $\underline{\Ext}^0_{\mcX}(-,\sim)$. On the other hand, part (3) follows as in the proof of \cite[Thm. 8.2.3]{EnJen00}, by using Lemma \ref{HorseshoeL} (see also Avramov and Martsinkovsky's \cite[Prop. 4.6]{AvMar02}). Finally, part (4) follows by \cite[Prop. 4.4]{AvMar02}. 
\end{proof}

Dually, for $M \in \mcA$, $\mcY \subseteq \mcA$ and $i \in \mathbb{N}$ it can be shown that there is a well defined (covariant) functor
\[
\overline{\Ext}^i_{\mcY}(M,-) \colon \RPRes(\mcY) \to \Mod(\mathbb{Z}),
\]
where for any $N \in \RPRes(\mcY)$ and any proper right $\mcY$-resolution $0 \to N \to \mcY^\bullet(N)$ of $N$,
\begin{align*}
\overline{\Ext}^i_{\mcY}(M,N) & := H^i({\rm Hom}_{\mcA}(M,\mcY^\bullet(N))).
\end{align*} 
In other words, $\overline{\Ext}^i_{\mcY}(M,-)$ is the right derived functor of $\Hom_{\mcA}(M,-)$ with respect to $\mcY$, satisfying the dual properties from Proposition \ref{ExtUL}.


\section{Balance systems and induced balanced pairs}\label{sec:main}

Balanced pairs were firstly introduced by Chen in \cite{Chen}. This notion comprises the conditions that two classes $\mathcal{X}$ and $\mathcal{Y}$ of objects in an abelian category $\mathcal{A}$ need to fulfill in order to obtain balance of Hom by $\mathcal{X} \times \mathcal{Y}$. Specifically, one needs that:
\begin{itemize}
\item $\mcX$ is precovering and $\mcY$ is preenveloping;

\item every object in $\mathcal{A}$ admits a $\Hom_{\mcA}(-,\mcY)$-acyclic proper left $\mcX$-resolution;

\item every object in $\mathcal{A}$ admits a $\Hom_{\mcA}(\mcX,-)$-acyclic proper right $\mcY$-resolution.
\end{itemize}
Sometimes it is difficult to verify each of these conditions for all of the objects in the whole domain category $\mcA$ (that is, if we want to balance Hom over $\mcA \times \mcA$). It is possible to overcome this limitation by restricting balance to classes contained in $\LPRes(\mcX)$ and $\RPRes(\mcY)$, in the cases where it is not possible to approximate all the objects in $\mcA$ by objects in $\mcX$ and $\mcY$. The following definition is an adaptation of \cite[Def. 1.1]{Chen} for this purpose.

\begin{definition}\label{def:balanced_pair} 
Let $\mcX$, $\tilde{\mcX}$, $\mcY$ and $\tilde{\mcY}$ be classes of objects in an abelian category $\mcA$. The pair $(\mcX,\mcY)$ is a (\textbf{n admissible}) \textbf{balanced pair with respect to a pair $\bm{(\tilde{\mcX},\tilde{\mcY})}$} if the following conditions are satisfied: 
\begin{enumerate}
\item[$\mathsf{(bp1)}$] $\mcX \subseteq \tilde{\mcX} \subseteq \LPRes(\mcX)$ and $\mcY \subseteq \tilde{\mcY} \subseteq \RPRes(\mcY)$.

\item[$\mathsf{(bp2)}$] Every $M \in \tilde{\mcX}$ has a (n admissible) $\Hom_{\mcA}(-,\mcY)$-acyclic proper left $\mcX$-resolution.

\item[$\mathsf{(bp3)}$] Every $N \in \tilde{\mcY}$ has a (n admissible) $\Hom_{\mcA}(\mcX,-)$-acyclic proper right $\mcY$-resolution.
\end{enumerate}
\end{definition}

One of the main advantages of having balance of Hom by $\mcX \times \mcY$ over $\tilde{\mcX} \times \tilde{\mcY}$ is to have several ways to define right derived functors. For instance, in Holm's \cite[Def. 3.7]{Holm05} the Gorenstein extension groups ${\rm Gext}^i_R(M,N)$, where $M$ has finite Gorenstein projective dimension and $N$ has finite Gorenstein injective dimension, are defined as right derived functors of Hom by either taking proper left Gorenstein projective ${\rm Hom}_R(-,\mathcal{GI}(R))$-acyclic resolutions of $M$, or proper right Gorenstein injective ${\rm Hom}_R(\mathcal{GP}(R),-)$-acyclic resolutions of $N$. In a more general sense, the following result gives us equivalent ways to compute the relative derived bifunctors $\underline{\Ext}^i_{\mcX}(-,\sim)$ and $\overline{\Ext}^i_{\mcY}(-,\sim)$. The proof follows from \cite[Thm. 2.6]{Holm05}.

\begin{proposition}[relative derived functors]\label{CriterioBalance}  
Let $(\mcX,\mcY)$ be a balanced pair in $\mcA$ with respect to $(\tilde{\mcX},\tilde{\mcY})$. Then, for every $i \in \mathbb{N}$, $M \in \tilde{\mcX}$ and $N \in \tilde{\mcY}$, there is a natural isomorphism 
\[
\underline{\Ext}^i_{\mcX}(M,N) \cong \overline{\Ext}^i_{\mcY}(M,N),
\]
which yields a well defined bifunctor 
\[
\Ext^i_{(\mcX,\mcY)}(-,\sim) \colon \tilde{\mcX}^{\rm op} \times \tilde{\mcY} \longrightarrow \Mod(\mathbb{Z})
\]
by setting 
\[
\Ext^i_{(\mcX,\mcY)}(M,N) := \underline{\Ext}^i_{\mcX}(M,N)
\]
or 
\[
\Ext^i_{(\mcX,\mcY)}(M,N) := \overline{\Ext}^i_{\mcY}(M,N)
\]
for every $M \in \tilde{\mcX}$ and $N \in \tilde{\mcY}$.
\end{proposition}

\begin{remark}
From Lemma \ref{ex-xyres} and Proposition \ref{ExtUL} (2), we can note that $\underline{\Ext}^0_{\mcX}$ can be extended to $\tilde{\mcX}^{\rm op} \times \mcA$ if $\mcX$ is a generator in $\mcA$, where 
\[
\Ext^0_{(\mcX,\mcY)}(M,N) = \Hom_{\mcA}(M,N) \cong \underline{\Ext}^0_{\mcX}(M,N),
\] 
for every $M \in \tilde{\mcX}$ and $N \in \mcA$. 
\end{remark}

The rest of this section will be devoted to the concept and properties of balanced systems. These will be formed by a pair of classes of objects accompanied with relative (co)generators satisfying certain orthogonality relations under $\Ext$. Such relations are in practice easier to check that conditions $\mathsf{(bp2)}$ and $\mathsf{(bp3)}$ in Definition \ref{def:balanced_pair} above. Moreover, balanced systems will be a good source to obtain balanced pairs. Before being more specific on this, let us introduce some notation and terminology.  

Let us borrow the term ``cospan'' from category theory. In our setting, a \emph{cospan} will be a triple $(\mcX,\mcX'',\mcX')$ of classes of objects in $\mcA$ along with containments $\mcX \subseteq \mcX'' \supseteq \mcX'$. If in addition $\mcX'' \subseteq \mcY$ for a class $\mcY \subseteq \mcA$ of objects in $\mcA$, we shall say that $(\mcX,\mcX'',\mcX')$ is a \emph{cospan in} $\mcY$. 

Given a cospan $(\mcX,\mcX'',\mcX')$ in $\LPRes(\mcX)$, we define the following classes of objects in $\mcA$:
\begin{itemize}
\item $\mathsf{L}_{\mcX}(\mcX',\mcX'')$ is the class of objects $M \in \mcX''$ admitting a proper left $\mcX$-resolution $\varepsilon \colon \mcX_\bullet(M) \to M$ such that $\Ker(\varepsilon) \in \mcX'$ and $ Z_i(\mcX_\bullet(M)) \in \mcX'$ for every $i \in \mathbb{N}^\ast$. \footnote{In the notation $\mathsf{L}_{\mcX}(\mcX',\mcX'')$, $\mathsf{L}_{\mcX}$ indicates that one takes \emph{left} $\mcX$-resolutions. The class $\mcX''$ placed to the right suggests that these resolutions end at an object in $\mcX''$.} 

\item $\overline{\mathsf{L}}_\mcX(\mcX',\mcX'') := \mathsf{L}_\mcX(\mcX',\mcX'') \cup \mcX$.
\end{itemize} 
Dually, for a cospan $(\mcY,\mcY'',\mcY')$ in $\RPRes(\mcY)$, we have the classes $\mathsf{R}^{\mcY}(\mcY'',\mcY')$ and $\overline{\mathsf{R}}^{\mcY}(\mcY'',\mcY')$.

\begin{remark}
Note that if $\mcX$ and $\mcX'$ are pointed (that is, $0 \in \mcX$ and $0 \in \mcX'$), then $\overline{\mathsf{L}}_\mcX(\mcX',\mcX'') = \mathsf{L}_\mcX(\mcX',\mcX'')$.
\end{remark}

\begin{example}\label{descLX} 
Let $\mcX,\omega \subseteq \mcA$ such that $\mcX$ is pointed and closed under extensions, $\omega$ is a relative cogenerator in $\mcX$ with $\id_{\mcX}(\omega) = 0$. Then from \cite[Thm. 2.8 (b)]{BMS} we know that every $M \in \mcX^\wedge$ has a special $\mcX$-precover with kernel in $\omega^\wedge$. It follows that $(\mcX,\mcX^\wedge,\omega^\wedge)$ is a cospan in $\LPRes(\mcX)$ and that $\mathsf{L}_{\mcX}(\omega^\wedge,\mcX^\wedge) = \mcX^\wedge = \overline{\mathsf{L}}_{\mcX}(\omega^\wedge,\mcX^\wedge)$.
\end{example}

The classes $\overline{\mathsf{L}}_\mcX(\mcX',\mcX'')$ and $\overline{\mathsf{R}}^{\mcY}(\mcY'',\mcY')$ offer a possible domain to obtain balance for Hom, provided that certain conditions are fulfilled. Such conditions are comprised in the following definition.

\begin{definition}\label{balanced-couple} 
A couple $[(\mcX,\omega); (\nu,\mcY)]$ of pairs of classes of objects in $\mcA$ is a \textbf{balanced system} with respect to a couple $[(\mcX',\mcX''); (\mcY'',\mcY')]$ in $\mcA$ if the following are satisfied:
\begin{enumerate}
\item[$\mathsf{(bs1)}$] $(\mcX,\mcX'',\mcX')$ is a cospan in $\LPRes(\mcX)$.

\item[$\mathsf{(bs2)}$] $(\mcY,\mcY'',\mcY')$ is a cospan in $\RPRes(\mcY)$.

\item[$\mathsf{(bs3)}$] $\mcX$ is a generator in $\mcX''$.

\item[$\mathsf{(bs4)}$] $\mcY$ is a cogenerator in $\mcY''$.

\item[$\mathsf{(bs5)}$] $\nu$ is a relative generator in $\mcY$.

\item[$\mathsf{(bs6)}$] $\omega$ is a relative cogenerator in $\mcX$. 

\item[$\mathsf{(bs7)}$] $\Ext^1_\mcA(\mcX'',\nu) = \Ext^1_\mcA(\mcX',\mcY) = 0$. 

\item[$\mathsf{(bs8)}$] $\Ext^1_\mcA(\omega,\mcY'') = \Ext^1_\mcA(\mcX,\mcY') = 0$.\footnote{Note that the even numbered conditions are the dual of the odd numbered ones. We shall use this convention in some of the upcoming definitions.}
\end{enumerate}
\end{definition}

Conditions $\mathsf{(bs1)}$, $\mathsf{(bs3)}$, $\mathsf{(bs5)}$ and $\mathsf{(bs7)}$ are enough to guarantee that any object in $\mathsf{L}_{\mcX}(\mcX',\mcX'')$ has an exact and $\Hom_\mcA(-,\mcY)$-acyclic proper left $\mcX$-resolution with cycles in $\mcX'$. This is proved in the following result.

\begin{proposition}[induced relative balanced pairs from balanced systems]\label{tripleKer-ex} 
Let $\mcX$, $\mcX'$, $\mcX''$, $\nu$, $\mcY \subseteq \mcA$ be classes of objects satisfying conditions $\mathsf{(bs1)}$, $\mathsf{(bs3)}$, $\mathsf{(bs5)}$ and $\mathsf{(bs7)}$ in Definition \ref{balanced-couple}. For every $M \in \mcX''$, if $\varepsilon \colon \mcX_\bullet(M) \to M$ is a proper left $\mcX$-resolution with $\Ker(\varepsilon) \in \mcX'$ and $ Z_i(\mcX_\bullet(M)) \in \mcX'$ for every $i \in \mathbb{N}^\ast$, then $\varepsilon$ is exact and $\Hom_\mcA(-,\mcY)$-acyclic. 

In particular, if $[(\mcX,\omega);(\nu,\mcY)]$ is a balanced system with respect to $[(\mcX',\mcX'');(\mcY'',\mcY')]$, then $(\mcX,\mcY)$ is a balanced pair with respect to $(\overline{\mathsf{L}}_\mcX(\mcX',\mcX''),\overline{\mathsf{R}}^{\mcY}(\mcY'',\mcY'))$.
\end{proposition}

\begin{proof} 
Let $M \in \mathsf{L}_{\mcX}(\mcX',\mcX'')$ with a proper left $\mcX$-resolution $\varepsilon \colon \mcX_\bullet(M) \to M$ with $\Ker(\varepsilon) \in \mcX'$ and $Z_i(\mcX_\bullet(M)) \in \mcX'$ for every $i \in \mathbb{N}^\ast$. We show that $\varepsilon$ is exact and $\Hom_{\mcA}(-,\mcY)$-acyclic. Indeed, since $\mcX$ is a generator in $\mcX''$ and $\mcX' \subseteq \mcX''$, we have by Lemma \ref{ex-xyres} that $\varepsilon$ is exact. By the same result, it remains to show that the short exact sequences
\begin{align*}
\Ker(\varepsilon) & \stackrel{\mu_0}\rightarrowtail X_0 \twoheadrightarrow M & & \text{and} & Z_i(\mcX_\bullet(M)) & \stackrel{\mu_i}\rightarrowtail X_i \twoheadrightarrow Z_{i-1}(\mcX_\bullet(M)),
\end{align*}
are $\Hom_{\mcA}(-,\mcY)$-acyclic. So let $f \colon Z_i(\mcX_\bullet(M)) \to Y$ with $Y \in \mcY$ (the proof for the sequence involving $\Ker(\varepsilon)$ is analogous). Since $\nu$ is a relative generator in $\mcY$, there is a short exact sequence 
\[
Y' \rightarrowtail V \stackrel{g}\twoheadrightarrow Y
\]
with $V \in \nu$ and $Y' \in \mcY$. Using the condition $\Ext^1_{\mcA}(\mcX',\mcY) = 0$, there exists a morphism $f' \colon Z_i(\mcX_\bullet(M)) \to V$ such that $g \circ f' = f$. On the other hand, the condition $\Ext^1_{\mcA}(\mcX'',\nu) = 0$ implies that there exists a morphism $f'' \colon X_i \to V$ such that $f'' \circ \mu_i = f'$. Hence, $(g \circ f'') \circ \mu_i = f$, and the result follows. 

The assertion concerning balanced couples is straightforward. 
\end{proof}

\begin{corollary}[relative derived functors from balanced systems]\label{coroll:tripleKer-ex}
Let $[(\mcX,\omega);(\nu,\mcY)]$ be a balanced system with respect to $[(\mcX',\mcX'');(\mcY'',\mcY')]$. Then, the following properties hold for the bifunctor
\[
\Ext^i_{(\mcX,\mcY)}(-,\sim) \colon \overline{\mathsf{L}}_\mcX(\mcX',\mcX'')^{\rm op} \times \overline{\mathsf{R}}^{\mcY}(\mcY'',\mcY') \longrightarrow \Mod(\mathbb{Z}).
\]
\begin{enumerate}
\item $\Ext^{\geq 1}_{(\mcX,\mcY)}(\mcX,-) = 0$ and $\Ext^{\geq 1}_{(\mcX,\mcY)}(-,\mcY) = 0$. 

\item $\Ext^0_{(\mcX,\mcY)}(M,N) = \Hom_{\mcA}(M,N)$ for $M \in \overline{\mathsf{L}}_\mcX(\mcX',\mcX'')$ and $N \in \overline{\mathsf{R}}^{\mcY}(\mcY'',\mcY')$. Moreover, we can replace $N \in \overline{\mathsf{R}}^{\mcY}(\mcY'',\mcY')$ by $N \in \mcA$ if $\mcX$ is a generator in $\mcA$, and $M \in \overline{\mathsf{L}}_\mcX(\mcX',\mcX'')$ by $M \in \mcA$ if $\mcY$ is a cogenerator in $\mcA$. 
\end{enumerate}
Moreover, for any $\Hom_{\mcA}(\mcX,-)$-acyclic complex
\[
\mathbb{M} \equiv 0 \to M ' \to M \to M'' \to 0,
\]
in $\overline{\mathsf{L}}_\mcX(\mcX',\mcX'')$ (that is, $M, M', M'' \in \overline{\mathsf{L}}_\mcX(\mcX',\mcX'')$), the following also hold:
\begin{enumerate}
\setcounter{enumi}{2}
\item If $\mcX$ is closed under finite coproducts, then for every $N \in \overline{\mathsf{R}}^{\mcY}(\mcY'',\mcY')$ there is a long exact sequence of abelian groups:
\begin{align*}
{} & \Hom_{\mcA}(M'',N) \rightarrowtail \Hom_{\mcA}(M,N) \to \Hom_{\mcA}(M',N) \to \Ext^1_{(\mcX,\mcY)}(M'',N) \to \cdots \\ 
{} & \cdots \to \Ext^i_{(\mcX,\mcY)}(M,N) \to \Ext^i_{(\mcX,\mcY)}(M',N) \to \Ext^{i+1}_{(\mcX,\mcY)}(M'',N) \to \cdots. 
\end{align*}

\item If $L \in \overline{\mathsf{L}}_\mcX(\mcX',\mcX'')$, then there is a long exact sequence of abelian groups:
\begin{align*}
{} & \Hom_{\mcA}(L,M') \rightarrowtail \Hom_{\mcA}(L,M) \to \Hom_{\mcA}(L,M'') \to \Ext^{1}_{(\mcX,\mcY)}(L,M') \to \cdots \\ 
{} & \cdots \to \Ext^{i}_{(\mcX,\mcY)}(L,M) \to \Ext^{i}_{(\mcX,\mcY)}(L,M'') \to \Ext^{i+1}_{(\mcX,\mcY)}(L,M') \to \cdots. \\
\end{align*}
\end{enumerate}
\end{corollary}

\begin{example}[balanced systems in relative Gorenstein homological algebra]\label{ex:balanced_sys}
~\
\begin{enumerate}
\item Easily, we can note that $[(\mathcal{P}(R),\mathcal{P}(R));(\mathcal{I}(R),\mathcal{I}(R)]$ is a balanced system with respect to $[(\Mod(R),\Mod(R));(\Mod(R),\Mod(R))]$. The corresponding relative extension bifunctors $\Ext^i_{(\mcX,\mcY)}(-,\sim)$ on $\Mod(R)^{\rm op} \times \Mod(R)$ is this case are the usual extension functors $\Ext^i_{R}(-,\sim)$. The properties from Corollary \ref{coroll:tripleKer-ex} are well known. 

\item From {\v{S}}aroch and {\v{S}}t'ov{\'{\i}}{\v{c}}ek's \cite[Thm. 4.11]{SarochStovicek} and Estrada, Iacob and Yeomans' \cite[Thm. 1]{EstradaIacobYeomans}, we know that if $R$ is a ring such that every Gorenstein projective left $R$-module is Gorenstein flat, and if every Gorenstein flat left $R$-module has finite Gorenstein projective dimension, then $\mcGP(R)$ is special precovering in $\Mod(R)$. Examples of such rings are given in \cite[\S \ 4]{EstradaIacobYeomans}. It then follows that over such rings, we have that 
\[
\mathsf{L}_{\mcGP(R)}(\mcGP(R)^\perp,\Mod(R)) = \Mod(R) = \LPRes(\mcGP(R)).
\] 
Moreover, in \cite[Thm. 5.6]{SarochStovicek} it is also proved that over an arbitrary ring $R$, the class $\mcGI(R)$ is special preenveloping. This in turn implies that 
\[
\mathsf{R}^{\mcGI(R)}({}^\perp\mcGI(R),\Mod(R)) = \Mod(R) = \RPRes(\mcGI(R)).
\] 
Thus, if $\mathcal{GF}(R)$ denotes the class of Gorenstein flat left $R$-modules and $R$ is a ring such that $\mcGP(R) \subseteq \mathcal{GF}(R) \subseteq \mathcal{GP}(R)^\wedge$, then after setting
\begin{align*}
\hspace{0.3cm} \mcX & = \mcGP(R), & \omega & = \mcP(R), & \mcX' & = \mcGP(R)^\perp, & \mcX'' & = \Mod(R), \\
\hspace{0.3cm} \mcY & = \mcGI(R), & \nu & = \mcI(R), & \mcY' & = {}^\perp\mcGI(R), & \mcY'' & = \Mod(R). 
\end{align*}
we have that the following conditions are equivalent:
\begin{enumerate}
\item[(i)] $[(\mcGP(R),\mcP(R));(\mcI(R),\mcGI(R))]$ is a balanced system with respect to 
\[
[(\mcGP(R)^\perp,\Mod(R));(\Mod(R),{}^\perp\mcGI(R))].
\]

\item[(ii)] $\Ext^1_R(\mcGP(R)^\perp,\mcGI(R)) = 0$ and $\Ext^1_R(\mcGP(R),{}^\perp\mcGI(R)) = 0$.

\item[(iii)] $(\mathcal{GP}(R),\mathcal{GI}(R))$ is a balanced pair (with respect to $(\Mod(R),\Mod(R))$).
\end{enumerate}
Moreover, if any of the previous equivalent conditions holds, the corresponding relative bifunctors $\Ext^i_{(\mcGP(R),\mcGI(R))}(-,\sim)$ are usually denoted by ${\rm Gext}^i_R(-,\sim)$ in the literature, and they satisfy the properties described in Corollary \ref{coroll:tripleKer-ex}. Note that the equivalence between (i) and (ii) is clear from the previous comments, while (ii) $\Rightarrow$ (iii) is a consequence of Proposition \ref{tripleKer-ex}. Finally, we can deduce (iii) $\Rightarrow$ (ii) from \cite[Prop. 2.2]{Chen}. Indeed, suppose $(\mathcal{GP}(R),\mathcal{GI}(R))$ is a balanced pair and take $M \in \mathcal{GP}(R)$ and $N \in {}^\perp\mcGI(R)$. Since $\mcGI(R)$ is special preenveloping, we have a short exact sequence $N \rightarrowtail E \twoheadrightarrow N'$ where $N' \in {}^\perp\mcGI(R)$ and $E \in {}^\perp\mcGI(R) \cap \mcGI(R) \subseteq \mcI(R)$. This sequence is clearly $\Hom_R(-,\mcGI(R))$-acyclic, and so it is $\Hom_R(\mcGP(R),-)$-acyclic since $(\mcGP(R),\mcGI(R))$ is a balanced pair. It then follows from the induced exact sequence
\[
\Hom_R(M,E) \twoheadrightarrow \Hom_R(M,N') \to \Ext^1_R(M,N) \to \Ext^1_R(M,E) = 0
\]
that $\Ext^1_R(M,N) = 0$. In a similar way, $\Ext^1_R(\mcGP(R)^\perp,\mcGI(R)) = 0$. 

For rings over which every left $R$-module has finite Gorenstein projective and Gorenstein injective dimensions, special Gorenstein projective precovers (resp., special Gorenstein injective preenvelopes) can be constructed so that their kernels belong to $\mathcal{P}(R)^\wedge$ (resp., with cokernels in $\mathcal{I}(R)^\vee$). In this case, we can set $\mcX' = \mcP(R)^\wedge$ and $\mcY' = \mcI(R)^\vee$, and following the arguments in \cite[Proof of Lem. 3.4]{Holm05} we can show that $\Ext^1_R(\mcP(R)^\wedge,\mcGI(R)) = 0$ and $\Ext^1_R(\mcGP(R),\mcI(R)^\vee) = 0$. 

\item Let $\mathcal{DP}(R)$ and $\mathcal{DI}(R)$ denote the classes of \textbf{Ding projective} and \textbf{Ding injective} left $R$-modules. Recall that $\mathcal{DP}(R)$ is formed by all cycles of exact complexes of projective left $R$-modules which are $\Hom_R(-,\mathcal{F}(R))$-acyclic. Dually, $\mathcal{DI}(R)$ is defined as the collection of all cycles of exact complexes of injective left $R$-modules which are $\Hom_R(\text{FP}_1\mbox{-}\mcI(R),-)$-acyclic, where if $\mathcal{FP}_1(R)$ denotes the class of finitely presented left $R$-modules, then $\text{FP}_1\mbox{-}\mcI(R) := [\mathcal{FP}_1(R)]^{\perp_1}$ denotes the class of FP-injective (or absolutely pure) left $R$-modules. 

Following the approach of the previous example, let us set 
\begin{align*}
\hspace{0.3cm} \mcX & = \mathcal{DP}(R), & \omega & = \mcP(R), & \mcX' & = \mathcal{DP}(R)^\perp, & \mcX'' & = \Mod(R), \\
\hspace{0.3cm} \mcY & = \mathcal{DI}(R), & \nu & = \mcI(R), & \mcY' & = {}^\perp\mathcal{DI}(R), & \mcY'' & = \Mod(R). 
\end{align*}
By Iacob's \cite[Thm. 2]{IacobCoresolved} and Bravo, Gillespie and Hovey's \cite[\S \ 8]{BravoGillespieHovey}, every left $R$-module has a Ding projective special precover, provided that $R$ is a right coherent ring. Concerning Ding injective left $R$-modules, we have from Gillespie and Iacob's \cite[Thm. 44]{GillespieIacobDuality} that $\mathcal{DI}(R)$ is special preenveloping for any arbitrary ring $R$. It then follows that if $R$ is a right coherent ring, then the equivalence (i) $\Leftrightarrow$ (ii) $\Leftrightarrow$ (iii) in the previous example holds if we replace $\mcGP(R)$ by $(\mathcal{DP}(R)$, and $\mcGI(R)$ by $\mathcal{DI}(R)$, respectively. Moreover, if any of these corresponding equivalent conditions holds, the  relative bifunctors $\Ext^i_{(\mathcal{DP}(R),\mathcal{DI}(R))}(-,\sim)$ are denoted by ${\rm Dext}^i_R(-,\sim)$, following Yang's notation in \cite{Yang12}. From Corollary \ref{coroll:tripleKer-ex} and its dual, we can obtain \cite[parts (1), (2) \& (3) of Rmk. 3.7]{Yang12}\footnote{Yang proves the mentioned properties of ${\rm Dext}$ is the case where $R$ is a Ding-Chen ring.}.

In particular, one can note that $[(\mathcal{DP}(R),\mathcal{P}(R));(\mathcal{I}(R),\mathcal{DI}(R))]$ is a balanced system with respect to $[(\mathcal{P}(R)^\wedge,\Mod(R));(\Mod(R),\mathcal{I}(R)^\vee)]$ in the case where $R$ is a Ding-Chen ring. 

\item In this example we analyze Gorenstein modules relative to complete duality pairs. Let $R$ be a commutative ring. Let us recall that two classes $\mathcal{C}$ and $\mathcal{D}$ of $R$-modules form a \textbf{complete duality pair} $(\mathcal{C,D})$ provided that:
\begin{itemize}
\item $C \in \mathcal{C}$ if, and only if, $C^+ \in \mathcal{D}$ (where $C^+ := \Hom_{\mathbb{Z}}(C,\mathbb{Q / Z})$).

\item $D \in \mathcal{D}$ if, and only if, $D^+ \in \mathcal{C}$.

\item $\mathcal{C}$ and $\mathcal{D}$ are closed under direct summands, and $\mathcal{D}$ is closed under finite coproducts.
 
\item $\mathcal{C}$ is closed under arbitrary coproducts and extensions, and $R \in \mathcal{C}$. 
\end{itemize}
In Gillespie's \cite[Def. 4.1]{GillespieDualityStable}, an $R$-module is called \textbf{Gorenstein $\bm{(\mathcal{C,D})}$-injective} if it is a cycle of an exact and $\Hom_R(\mathcal{D},-)$-acyclic complex of injective $R$-modules. On the other hand, an $R$-module is called \textbf{Gorenstein $\bm{(\mathcal{C,D})}$-projective} if it is a cycle of an exact and $\Hom_R(-,\mathcal{C})$-acyclic complex of projective $R$-modules. Let us denote by $\mcGP_{(\mathcal{C,D})}(R)$ and $\mcGI_{(\mathcal{C,D})}(R)$ the classes of Gorenstein $(\mathcal{C,D})$-projective and Gorenstein $(\mathcal{C,D})$-injective $R$-modules, respectively. From these definitions and \cite[Thms. 4.6 \& 4.9]{GillespieDualityStable}, the choice
\begin{align*}
\hspace{0.6cm} \mcX & = \mcGP_{(\mathcal{C,D})}(R), & \omega & = \mcP(R), & \mcX' & = \mcGP_{(\mathcal{C,D})}(R)^\perp, & \mcX'' & = \Mod(R), \\
\hspace{0.6cm} \mcY & = \mcGI_{(\mathcal{C,D})}(R), & \nu & = \mcI(R), & \mcY' & = {}^\perp\mcGI_{(\mathcal{C,D})}(R), & \mcY'' & = \Mod(R). 
\end{align*}
satisfies conditions from $\mathsf{(bs1)}$ to $\mathsf{(bs6)}$ in Definition \ref{balanced-couple}, and also we can get the equivalence (i) $\Leftrightarrow$ (ii) $\Leftrightarrow$ (iii) from the second example after replacing $\mcGP(R)$ by $\mcGP_{(\mathcal{C,D})}(R)$, and $\mcGI(R)$ by $\mcGI_{(\mathcal{C,D})}(R)$, respectively.

Let us now focus on the particular case where $\mathcal{C}$ is the class of level $R$-modules and $\mathcal{D}$ is the class of absolutely clean $R$-modules. Let us recall that an $R$-module is \textbf{of type $\bm{\text{FP}_\infty}$} (or \textbf{super finitely presented}) if it admits an exact left resolution by finitely generated projective $R$-modules. If we denote this class by $\mathcal{FP}_\infty(R)$, then the orthogonal complement $\mathcal{A} = \mathcal{FP}_\infty(R)^{\perp_1}$ is the class of \textbf{absolutely clean} $R$-modules. On the other hand, an $R$-module $L$ is \textbf{level} if ${\rm Tor}^R_1(F,L) = 0$ for every $F \in \mathcal{FP}_\infty(R)$. It is known that level and absolutely clean $R$-modules form a complete duality pair (see Bravo, Gillespie and Hovey's \cite[\S \ 2]{BravoGillespieHovey}), and the Gorenstein projective and Gorenstein injective $R$-modules relative to this pair, known as \textbf{Gorenstein AC-projective} and \textbf{Gorenstein AC-injective}, were introduced and widely studied also in \cite{BravoGillespieHovey}. Let us denote these classes of Gorenstein modules by $\mcGP_{\rm AC}(R)$ and $\mcGI_{\rm AC}(R)$, respectively. In the case where $[(\mcGP_{\rm AC}(R),\mcP(R));(\mcI(R),\mcGI_{\rm AC}(R))]$ is a balanced system with respect to $[(\mcGP_{\rm AC}(R)^\perp,\Mod(R));(\Mod(R),{}^\perp\mcGI_{\rm AC}(R))]$, we denote the corresponding relative extension bifunctors by ${\rm G}_{\rm ac}{\rm ext}^i_R(-,\sim)$. We are not aware whether the existence and properties of these bifunctors have been documented before, so we mention that:
\begin{itemize}
\item ${\rm G}_{\rm ac}{\rm ext}^{\geq 1}_R(\mcGP_{\rm AC}(R),-) = 0$ and ${\rm G}_{\rm ac}{\rm ext}^{\geq 1}_R(-,\mcGI_{\rm AC}(R)) = 0$. 

\item ${\rm G}_{\rm ac}{\rm ext}^{0}_R(M,N) = \Hom_R(M,N)$ for every $M, N \in \Mod(R)$. 

\item If $0 \to M ' \to M \to M'' \to 0$ is a $\Hom_R(\mcGP_{\rm AC}(R),-)$-acyclic or $\Hom_R(-,\mcGI_{\rm AC}(R))$-acyclic (and so also exact) complex of $R$-modules, then for every $N \in \Mod(R)$ there are long exact sequences of abelian groups 
\begin{align*}
{} & \Hom_R(M'',N) \rightarrowtail \Hom_R(M,N) \to \Hom_R(M',N) \to {\rm G}_{\rm ac}{\rm ext}^1_R(M'',N) \to \cdots \\ 
{} & \cdots \to {\rm G}_{\rm ac}{\rm ext}^i_R(M,N) \to {\rm G}_{\rm ac}{\rm ext}^i_R(M',N) \to {\rm G}_{\rm ac}{\rm ext}^{i+1}_R(M'',N) \to \cdots, \\
{} & \Hom_R(N,M') \rightarrowtail \Hom_R(N,M) \to \Hom_R(N,M'') \to {\rm G}_{\rm ac}{\rm ext}^1_R(N,M') \to \cdots \\ 
{} & \cdots \to {\rm G}_{\rm ac}{\rm ext}^i_R(N,M) \to {\rm G}_{\rm ac}{\rm ext}^i_R(N,M'') \to {\rm G}_{\rm ac}{\rm ext}^{i+1}_R(N,M') \to \cdots. 
\end{align*}
\end{itemize}
One particular example in which $(\mcGP_{\rm AC}(R),\mcGI_{\rm AC}(R))$ is a balanced pair occurs when $R$ is an AC-Gorenstein ring (see Gillespie's \cite[Def. 4.1]{GillespieAC}). Over such rings, $\mcGP_{\rm AC}(R)^\perp$ and ${}^\perp\mcGI_{\rm AC}(R)$ coincide with the class of $R$-modules with finite absolutely clean dimension (or equivalently, finite level dimension \cite[Thm. 4.2]{GillespieAC}), and so assertion (ii) holds.
\end{enumerate}
The previous classes of relative Gorenstein projective and injective modules also constitute sources of balance over arbitrary rings, but restricted to certain subcategories of $\Mod(R)$, as we will mention below in Example \ref{ex:finite_balanced_systems}. 
\end{example}

Among the balanced systems considered in this work, we shall focus on a special type that we call \emph{finite}, in the sense that the induced balance for Hom, in the sense of Proposition \ref{tripleKer-ex}, is obtained over classes with finite relative left and right resolution dimensions, instead of $\overline{\mathsf{L}}_\mcX(\mcX',\mcX'') \times \overline{\mathsf{R}}^{\mcY}(\mcY'',\mcY')$.

\begin{definition}\label{finite-balanced-system} 
We shall say that two pairs $(\mcX,\omega)$ and $(\nu,\mcY)$ of classes of objects in $\mcA$ form a \textbf{finite balanced system}, denoted $[(\mcX,\omega);(\nu,\mcY)]$, provided that:
\begin{enumerate}
\item[$\mathsf{(fbs1)}$] $\mcX$ is pointed and closed under extensions. 

\item[$\mathsf{(fbs2)}$] $\mcY$ is pointed and closed under extensions. 

\item[$\mathsf{(fbs3)}$] $\omega$ is a relative cogenerator in $\mcX$ with $\id_{\mcX}(\omega) = 0$.

\item[$\mathsf{(fbs4)}$] $\nu$ is a relative generator in $\mcY$ with $\pd_{\mcY}(\nu) = 0$. 

\item[$\mathsf{(fbs5)}$] $\Ext^1_\mcA(\mcX^\wedge,\nu) = \Ext^1_\mcA(\omega^\wedge,\mcY) = 0$.

\item[$\mathsf{(fbs6)}$] $\Ext^1_\mcA(\omega,\mcY^\vee) = \Ext^1_\mcA(\mcX,\nu^\vee) = 0$.
\end{enumerate}
If in addition, $\pd_{\mcY}(\omega) = 0$ and $\id_{\mcX}(\nu) = 0$, we shall say that the finite balanced system $[(\mcX,\omega);(\nu,\mcY)]$ is \textbf{strong}. 
\end{definition}

Most of our examples of finite balanced systems in the following sections will be strong, although this additional feature is not needed to obtain relative balance, as described in the following result.

\begin{proposition}[relative balanced pairs from finite balanced systems]\label{Tfbs} 
Given a finite balanced system $[(\mcX,\omega);(\nu,\mcY)]$ in $\mcA$, the pair $(\mcX,\mcY)$ is balanced with respect to $(\mcX^\wedge, \mcY^\vee)$.
\end{proposition}

\begin{proof} 
By Example \ref{descLX} and its dual, we have that $[(\mcX,\omega);(\nu,\mcY)]$ is balanced with respect to $[(\omega^\wedge,\mcX^\wedge); (\mcY^\vee,\nu^\vee)]$, and also $\overline{\mathsf{L}}_\mcX(\omega^\wedge,\mcX^\wedge) = \mcX^\wedge$ and $\overline{\mathsf{R}}^{\mcY}(\mcY^\vee,\nu^\vee)=\mcY^\vee$. Thus, the result follows by Proposition \ref{tripleKer-ex}.
\end{proof}

\begin{remark}\label{rem:sfbs}
If $[(\mcX,\omega);(\nu,\mcY)]$ is a strong finite balanced system, then by dimension shifting we have that 
\[
\pd_{\mcY}(\omega^\wedge) = \id_{\mcX}(\nu^\vee) = \id_{\omega}(\mcY^\vee) = \pd_{\nu}(\mcX^\wedge) = 0.
\] 
\end{remark}

\begin{example}\label{ex:finite_balanced_systems}
Over an arbitrary ring $R$, 
\begin{align*}
& [(\mathcal{GP}(R),\mathcal{P}(R));(\mathcal{I}(R),\mathcal{GI}(R))], \\ 
& [(\mathcal{DP}(R),\mathcal{P}(R));(\mathcal{I}(R),\mathcal{DI}(R))] \text{ \ and} \\ 
& [(\mathcal{GP}_{(\mathcal{L,A})}(R),\mathcal{P}(R));(\mathcal{I}(R),\mathcal{GI}_{(\mathcal{L,A})}(R))]
\end{align*}
are strong finite balanced systems. In particular, the equalities $\Ext^1_R(\omega^\wedge,\mcY) = 0$ and $\Ext^1_R(\mcX,\nu^\vee) = 0$ in $\mathsf{(fbs5)}$ and $\mathsf{(fbs6)}$ follow using the arguments from \cite[Proof of Lem. 3.4]{Holm05}. The corresponding derived functors ${\rm Gext}^i_R(-,\sim)$, ${\rm Dext}^i_R(-,\sim)$ and ${\rm G}_{\rm ac}{\rm ext}^i_R(-,\sim)$ are defined on the subcategories 
\[
(\mathcal{GP}(R)^\wedge)^{\rm op} \times \mathcal{GI}(R)^\vee, \ (\mathcal{DP}(R)^\wedge)^{\rm op} \times \mathcal{DI}(R)^\vee \ \text{and} \ (\mathcal{GP}_{\rm AC}(R)^\wedge)^{\rm op} \times \mathcal{GI}_{\rm AC}(R)^\vee,
\] 
respectively. Note that in these examples, no extra assumptions on the ring $R$ are needed in order to obtain balance, compared with Example \ref{ex:balanced_sys}. The cost of this is that this balance is restricted to the mentioned subcategories. 
\end{example}

Below we provide a characterization of strong finite balanced systems.

\begin{lemma}\label{tripleKer-ex2} 
Let $\mcX$, $\mcX'$, $\mcY$ and $\nu$ be classes of objects in $\mcA$ satisfying the following:
\begin{enumerate}
\item $\mcX$ is a relative quasi-generator in $\mcX'$.

\item $\nu$ is a relative generator in $\mcY$.

\item $\Ext^1_\mcA(\mcX',\nu) = 0$ and $\id_\mcX(\nu) = 0$.

\item $\pd_\mcY(L) < \infty$ for every $L \in \mcX'$. 
\end{enumerate}
Then $\id_{\mcX'}(\mcY) = 0$.
\end{lemma}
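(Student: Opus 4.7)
The plan is to establish $\Ext^i_{\mcA}(L,Y)=0$ for every $L\in\mcX'$, $Y\in\mcY$ and $i\geq 1$. Fixing $L$ arbitrarily, I would perform downward induction on $i$, starting from $n+1$, where $n:=\pd_{\mcY}(L)<\infty$ by hypothesis (4), and descending to $i=1$.

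The preliminary step would be to upgrade hypothesis (3) from degree one to all positive degrees, showing that $\Ext^i_{\mcA}(L,V)=0$ for every $L\in\mcX'$, $V\in\nu$ and $i\geq 1$. To do so, I would iterate the quasi-generator property (1): setting $L_0:=L\in\mcX'$, there is a short exact sequence $L_1\rightarrowtail X_0\twoheadrightarrow L_0$ with $X_0\in\mcX$ and $L_1\in\mcX'$, and iterating gives short exact sequences $L_{j+1}\rightarrowtail X_j\twoheadrightarrow L_j$ with $X_j\in\mcX$ and $L_j\in\mcX'$ for every $j\geq 0$. Applying $\Hom_{\mcA}(-,V)$ and using $\id_{\mcX}(\nu)=0$, which kills $\Ext^{\geq 1}_{\mcA}(X_j,V)$, standard dimension shifting yields $\Ext^i_{\mcA}(L,V)\cong\Ext^1_{\mcA}(L_{i-1},V)$ for $i\geq 2$, and this vanishes by (3) since $L_{i-1}\in\mcX'$. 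The case $i=1$ is hypothesis (3) itself.

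With this in hand, the downward induction goes as follows. The base $i\geq n+1$ is immediate from $\pd_{\mcY}(L)=n$. For the inductive step, assume $\Ext^{i+1}_{\mcA}(L,\mcY)=0$ for some $1\leq i\leq n$ and fix $Y\in\mcY$. Since $\nu$ is a relative generator in $\mcY$ by (2), there is a short exact sequence $Y'\rightarrowtail V\twoheadrightarrow Y$ with $V\in\nu$ and $Y'\in\mcY$. The associated long exact sequence contains
$$\Ext^i_{\mcA}(L,V)\to\Ext^i_{\mcA}(L,Y)\to\Ext^{i+1}_{\mcA}(L,Y'),$$
whose left term vanishes by the preliminary step and whose right term vanishes by the inductive hypothesis (as $Y'\in\mcY$). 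Hence $\Ext^i_{\mcA}(L,Y)=0$, completing the induction.

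I expect no serious obstacle: the only mildly delicate point is arranging the infinite iterated quasi-generator resolution of $L$ with all syzygies confined to $\mcX'$, which is immediate from the definition of quasi-generator in (1). Note that hypothesis (4) is used only to guarantee a finite starting point for the induction, so that the downward recursion terminates.
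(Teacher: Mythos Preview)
Your proof is correct and follows essentially the same approach as the paper: first upgrade $\Ext^1_{\mcA}(\mcX',\nu)=0$ to $\id_{\mcX'}(\nu)=0$ via dimension shifting along an iterated quasi-generator resolution of $L$, then use the relative generator $\nu\subseteq\mcY$ together with $\pd_{\mcY}(L)<\infty$ to kill $\Ext^{\geq 1}_{\mcA}(L,\mcY)$. The only cosmetic difference is that the paper presents the second step as forward dimension shifting ($\Ext^i_{\mcA}(L,Y)\cong\Ext^{i+n}_{\mcA}(L,Y_n)=0$) along a full $\nu$-resolution of $Y$, whereas you phrase it as a downward induction using one short exact sequence at a time; these are the same argument.
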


\begin{proof} 
We assert that $\id_{\mcX'}(\nu) = 0$. Indeed, let $L \in \mcX'$ and $V \in \nu$. Since $\mcX$ is a relative quasi-generator in $\mcX',$ there is an exact left $\mcX$-resolution $\varepsilon \colon \mathcal{X}_\bullet(L) \to L$ of $L$ such that ${\rm Ker}(\varepsilon) \in \mcX'$ and $Z_i(\mathcal{X}_\bullet(L)) \in \mcX'$ for every $i \in \mathbb{N}^\ast$. Since $\id_\mcX(\nu) = 0$, by dimension shifting we have that $\Ext^{2}_\mcA(L,V) \cong \Ext^1_\mcA({\rm Ker}(\varepsilon),V) = 0$ and $\Ext^{i+2}_\mcA(L,V) \cong \Ext^1_\mcA(Z_{i}(\mathcal{X}_\bullet(L)),V) = 0$ for every $i \in \mathbb{N}^\ast$.

Now let $Y \in \mcY$. By (4), we know that $n := \pd_\mcY(L) < \infty$ is finite. On the other hand, using that $\nu$ is a relative generator in $\mcY$, we can construct an exact sequence 
\[
Y_n \rightarrowtail V_{n-1} \to \cdots \to V_1 \to V_0 \twoheadrightarrow Y
\]
where $Y_n \in \mcY$ and $V_i \in \nu$ for every $i \in \{ 0, 1, \dots, n-1 \}$. From $\id_{\mcX'}(\nu) = 0$, it follows that $\Ext^i_\mcA(L,Y) \cong \Ext^{i+n}_{\mcA}(L,Y_n) = 0$ for every $i \in \mathbb{N}^\ast$. Hence, $\id_{\mcX'}(\mcY) = 0$.
\end{proof}

\begin{proposition}[characterization of strong finite balanced systems]\label{prop:characterization_fbs}
Let $\mcX$, $\mcY$, $\omega$ and $\nu$ be classes of objects in $\mcA$ satisfying conditions from $\mathsf{(fbs1)}$ to $\mathsf{(fbs4)}$ in Definition \ref{finite-balanced-system}. Then, $[(\mcX,\omega);(\nu,\mcY)]$ is a strong finite balanced system if, and only if, the following conditions are satisfied:
\begin{enumerate}
\item $\Ext^1_{\mcA}(\omega^\wedge,\nu) = \Ext^2_{\mcA}(\mcX,\nu) = 0$.

\item $\Ext^1_{\mcA}(\omega,\nu^\vee) = \Ext^2_{\mcA}(\omega,\mcY) = 0$.

\item $\pd_{\mcY}(M) < \infty$ for every $M \in \omega^\wedge$.

\item $\id_{\mcX}(N) < \infty$ for every $N \in \nu^\vee$.

\item $\id_{\omega}(\nu) = 0$. 
\end{enumerate}
\end{proposition}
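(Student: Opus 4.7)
The plan is to prove the two implications separately. For ($\Rightarrow$), assume $[(\mcX,\omega);(\nu,\mcY)]$ is a strongly finite balanced system. Conditions (1) and (2) of the proposition follow from the inclusion $\omega \subseteq \mcX$ (hence $\omega^\wedge \subseteq \mcX^\wedge$) together with the Definition~\ref{def:sfbs} assertions $\id_\mcX(\nu) = 0$ and $\pd_\mcY(\omega) = 0$, which also supply the $\Ext^2_\mcA$-vanishings. Condition (3), that $\pd_\mcY(M) < \infty$ for $M \in \omega^\wedge$, is standard dimension-shifting along a finite $\omega$-resolution of $M$ combined with $\pd_\mcY(\omega) = 0$; (4) is dual, and (5) reads $\id_\omega(\nu) \leq \id_\mcX(\nu) = 0$.

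For ($\Leftarrow$), I would carry out six steps. Step~1 is $\pd_\mcY(\omega) = 0$: fix $W \in \omega$ and $Y \in \mcY$, and iterate the relative generator $\nu$ in $\mcY$ to obtain short exact sequences $Y_{k+1} \rightarrowtail V_k \twoheadrightarrow Y_k$ with $V_k \in \nu$ and $Y_k \in \mcY$. Since $\id_\omega(\nu) = 0$ forces $\Ext^{\geq 1}_\mcA(W,V_k) = 0$, the long exact sequences yield $\Ext^i_\mcA(W,Y) \cong \Ext^{i+k}_\mcA(W,Y_k)$ for every $k \geq 0$ and $i \geq 1$; since $W \in \omega \subseteq \omega^\wedge$, condition (3) gives $\pd_\mcY(W) < \infty$, so the right-hand side vanishes for large $k$. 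Step~2, $\id_\mcX(\nu) = 0$, is the dual dimension-shift using the relative cogenerator $\omega$ of $\mcX$ and the fact from condition (4) that $\id_\mcX(V) < \infty$ for every $V \in \nu \subseteq \nu^\vee$.

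Step~3, $\Ext^1_\mcA(\omega^\wedge,\mcY) = 0$, is a direct application of Lemma~\ref{tripleKer-ex2} with the substitution $(\mcX,\mcX',\mcY,\nu) \leftarrow (\omega,\omega^\wedge,\mcY,\nu)$: truncating any finite $\omega$-resolution makes $\omega$ a quasi-generator in $\omega^\wedge$, and the remaining hypotheses match conditions (1), (3) and (5). The conclusion is actually the stronger $\id_{\omega^\wedge}(\mcY) = 0$. Step~4 is its mirror: the dual of Lemma~\ref{tripleKer-ex2} applied with the roles $(\nu,\nu^\vee,\mcX,\omega)$ delivers $\pd_{\nu^\vee}(\mcX) = 0$, using $\pd_\nu(\omega) = 0$ (from Step~1 and $\nu \subseteq \mcY$) together with conditions (2), (4), and the fact that $\nu$ is a (quasi-)cogenerator in $\nu^\vee$.

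The hardest step is Step~5, $\Ext^1_\mcA(\mcX^\wedge,\nu) = 0$. The special $\mcX$-precover of Example~\ref{descLX} alone is insufficient, because applying $\Hom_\mcA(-,V)$ to it only controls $\Ext^{\geq 2}_\mcA(M,V)$ and leaves $\Ext^1_\mcA(M,V)$ as a residual quotient of $\Hom_\mcA(K,V)$. The remedy is the companion Auslander--Buchweitz decomposition: under our hypotheses on $(\mcX,\omega)$, every $M \in \mcX^\wedge$ also admits an exact sequence $0 \to M \to H^M \to X^M \to 0$ with $H^M \in \omega^\wedge$ and $X^M \in \mcX$, which is the dual statement to Example~\ref{descLX} and is part of the same \cite{BMS} package. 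Applying $\Hom_\mcA(-,V)$ for $V \in \nu$, the relevant piece of the long exact sequence is
\[
\Ext^1_\mcA(H^M,V) \to \Ext^1_\mcA(M,V) \to \Ext^2_\mcA(X^M,V),
\]
and the outer two groups vanish by condition (1), forcing $\Ext^1_\mcA(M,V) = 0$. Step~6 is perfectly dual: for $N \in \mcY^\vee$, the dual Auslander--Buchweitz sequence $0 \to Y_N \to V^N \to N \to 0$ (with $V^N \in \nu^\vee$, $Y_N \in \mcY$) combined with condition (2) and Step~1 sandwiches $\Ext^1_\mcA(W,N)$ between zeros for every $W \in \omega$. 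The principal obstacle throughout is orchestrating these vanishings so that each long exact sequence terminates cleanly, which is exactly why both halves of the Auslander--Buchweitz approximation theorem are required in Steps~5 and~6.
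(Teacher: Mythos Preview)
Your proof is correct and follows the paper's approach: the key ingredients for the ``if'' direction---the Auslander--Buchweitz sequence $M \rightarrowtail H \twoheadrightarrow X$ (with $H \in \omega^\wedge$, $X \in \mcX$) to obtain $\Ext^1_\mcA(\mcX^\wedge,\nu) = 0$, and Lemma~\ref{tripleKer-ex2} with $(\mcX,\mcX') := (\omega,\omega^\wedge)$ for $\Ext^1_\mcA(\omega^\wedge,\mcY) = 0$---are exactly those used in the paper, which then leaves condition (6) of Definition~\ref{def:sfbs} to duality. Your Steps~1 and~2 are correct but not strictly needed as separate arguments: $\pd_\mcY(\omega) = 0$ already falls out of the lemma's conclusion $\id_{\omega^\wedge}(\mcY) = 0$ (since $\omega \subseteq \omega^\wedge$), and the hypothesis $\pd_\nu(\omega) = 0$ you invoke in Step~4 is just a restatement of condition~(5) of the proposition, $\id_\omega(\nu) = 0$.
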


\begin{proof}
The ``only if'' part is clearly a consequence of Definition \ref{finite-balanced-system} and Remark \ref{rem:sfbs}. For the ``if'' part, suppose that conditions (1) to (5) in the previous statement hold. It is only left to show condition $\mathsf{(fbs5)}$ in Definition \ref{finite-balanced-system} and that $\pd_{\mcY}(\omega) = 0$. By \cite[Thm. 2.8]{BMS}, for every $M \in \mcX^\wedge$ there is a short exact sequence $M \rightarrowtail H \twoheadrightarrow X$ with $X \in \mcX$ and $H \in \omega^\wedge$. Then for every $V \in \nu$ we have the exact sequence 
\[
\Ext^1_{\mcA}(H,V) \to \Ext^1_{\mcA}(M,V) \to \Ext^2_{\mcA}(X,V),
\] 
where $\Ext^1_{\mcA}(H,V) = 0$ and $\Ext^2_{\mcA}(X,V) = 0$ by condition (1). Then, $\Ext^1_{\mcA}(M,V) = 0$ for every $M \in \mcX^\wedge$ and $V \in \nu$. Conditions $\Ext^1_{\mcA}(\omega^\wedge,\mcY) = 0$ and $\pd_{\mcY}(\omega) = 0$ in $\mathsf{(fbs5)}$ follows by setting $\mcX' := \omega^\wedge$ and $\mcX := \omega$ in Lemma \ref{tripleKer-ex2}.
\end{proof}


\section{Applications and examples}\label{sec:applications}

Let us apply the previous results to obtain some relative balanced pairs from balanced systems. Our examples bellow range over settings that include relative Gorenstein modules and chain complexes, flat and cotorsion quasi-coherent sheaves, among others.


\subsection*{Balance systems from cotorsion pairs sharing the same kernel}

Recall that two classes $\mcX$ and $\mcY$ of objects in an abelian category $\mcA$ form a \emph{cotorsion pair} $(\mcX,\mcY)$ in $\mcA$ if $\mcX = {}^{\perp_1}\mcY$ and $\mcY = \mcX^{\perp_1}$. Moreover, $(\mcX,\mcY)$ is \emph{complete} if every object of $\mcA$ admits a special $\mcX$-precover and a special $\mcY$-preenvelope. If $\id_{\mcX}(\mcY) = 0$, we say that $(\mcX,\mcY)$ is \emph{hereditary}.

In this section, we let $(\mathcal{X}_1,\mathcal{Y}_1)$ and $(\mathcal{X}_2,\mathcal{Y}_2)$ be  hereditary complete cotorsion pairs in an abelian category $\mathcal{A}$, satisfying 
\begin{align}\label{la_igualdad}
\mathcal{X}_1 \cap \mathcal{Y}_1 & = \mathcal{X}_2 \cap \mathcal{Y}_2.
\end{align} 
Let us refer to the previous intersection as $\omega$. We provide necessary and sufficient conditions so that $[(\mathcal{X}_1,\omega);(\omega,\mathcal{Y}_1)]$ (or equivalently, $[(\mathcal{X}_2,\omega);(\omega,\mathcal{Y}_2)]$) is a strong finite balanced system in $\mathcal{A}$. 

It is clear that conditions from $\mathsf{(fbs1)}$ to $\mathsf{(fbs4)}$ in Definition \ref{finite-balanced-system} hold for both $[(\mathcal{X}_1,\omega);(\omega,\mathcal{Y}_1)]$ and $[(\mathcal{X}_2,\omega);(\omega,\mathcal{Y}_2)]$. Moreover, condition (5) in Proposition \ref{prop:characterization_fbs} is trivial for $\omega$. Regarding (3) and (4) in the same statement, for every $M \in \omega^\wedge$ it can be shown by dimension shifting on $\resdim_{\omega}(M)$ that 
\[
\max\{ \pd_{\mcY_1}(M), \pd_{\mcY_2}(M) \} \leq \resdim_{\omega}(M),
\] 
while 
\[
\max\{ \id_{\mcX_1}(N), \id_{\mcX_2}(N) \} \leq \coresdim_{\omega}(N)
\] 
is dual for every $N \in \omega^\vee$. Moreover, we already know that 
\[
\Ext^2_\mathcal{A}(\mcX_1 \cup \mcX_2,\omega) = 0 = \Ext^2_\mathcal{A}(\omega,\mcY_1 \cup \mcY_2).
\]
The remaining conditions in Proposition \ref{prop:characterization_fbs} are not necessarily true, namely ${\rm Ext}^1_{\mcA}(\omega,\omega^\vee) = 0$ and ${\rm Ext}^1_{\mathcal{A}}(\omega^\wedge,\omega) = 0$. Thus, we have the following equivalence.

\begin{proposition}[simultaneous balance from cotorsion pairs sharing the same kernel]\label{prop:cotorsion_balance}
Let $(\mathcal{X}_1,\mathcal{Y}_1)$ and $(\mathcal{X}_2,\mathcal{Y}_2)$ be hereditary complete cotorsion pairs in an abelian category $\mathcal{A}$ which satisfy \eqref{la_igualdad}. Then, the following conditions are equivalent:
\begin{enumerate}
\item[(a)] $[(\mathcal{X}_1,\omega);(\omega,\mathcal{Y}_1)]$ is a strong finite balanced system.

\item[(b)] $[(\mathcal{X}_2,\omega);(\omega,\mathcal{Y}_2)]$ is a strong finite balanced system.

\item[(c)] $\Ext^1_\mathcal{A}(\omega,\omega^\vee) = 0$ and $\Ext^1_\mathcal{A}(\omega^\wedge,\omega) = 0$.
\end{enumerate}
\end{proposition}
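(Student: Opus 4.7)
The plan is to read off the equivalences directly from Proposition~\ref{prop:characterization_fbs}, applied to the system $[(\mcX_i,\omega);(\omega,\mcY_i)]$ for $i = 1, 2$ (so that $\nu = \omega$), verifying that every condition demanded by that proposition except the two $\Ext^1$-vanishings packaged in (c) is automatic from the hereditary complete cotorsion pair hypothesis together with the assumption $\omega = \mcX_1 \cap \mcY_1 = \mcX_2 \cap \mcY_2$. Since the set-up is symmetric in the two cotorsion pairs, once I establish (a)$\Leftrightarrow$(c) the equivalence (b)$\Leftrightarrow$(c) follows identically, giving the full chain of equivalences.

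First I would check the standing hypotheses (1)--(4) of Definition~\ref{def:sfbs}. Items (1) and (2) are immediate since the classes of any cotorsion pair contain $0$ and are closed under extensions. For (3), given $X\in\mcX_i$, completeness of $(\mcX_i,\mcY_i)$ furnishes a special $\mcY_i$-preenvelope $X\rightarrowtail Y\twoheadrightarrow C$ with $Y\in\mcY_i$ and $C\in{}^{\perp_1}\mcY_i=\mcX_i$; closure of $\mcX_i$ under extensions then forces $Y\in\mcX_i\cap\mcY_i=\omega$, so $\omega$ is a relative cogenerator in $\mcX_i$. The equality $\id_{\mcX_i}(\omega) = 0$ is just the hereditariness of $(\mcX_i,\mcY_i)$ applied to $\omega\subseteq\mcY_i$. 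A dual argument using a special $\mcX_i$-precover yields (4).

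Next I would examine the five conditions of Proposition~\ref{prop:characterization_fbs} with $\nu = \omega$. The second halves of (1) and (2), namely $\Ext^2_\mathcal{A}(\mcX_i,\omega) = 0$ and $\Ext^2_\mathcal{A}(\omega,\mcY_i) = 0$, follow from hereditariness together with $\omega\subseteq\mcY_i$ and $\omega\subseteq\mcX_i$ (as already recorded in the paragraph preceding the statement). Conditions (3) and (4) are delivered by the dimension bounds $\pd_{\mcY_i}(M)\leq\resdim_\omega(M)$ for $M\in\omega^\wedge$ and $\id_{\mcX_i}(N)\leq\coresdim_\omega(N)$ for $N\in\omega^\vee$, which were also noted there. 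Condition (5), $\id_\omega(\omega) = 0$, is again hereditariness since $\omega\subseteq\mcX_i\cap\mcY_i$. After these reductions, the only residual requirements from Proposition~\ref{prop:characterization_fbs} are $\Ext^1_\mathcal{A}(\omega^\wedge,\omega) = 0$ and $\Ext^1_\mathcal{A}(\omega,\omega^\vee) = 0$, which together are exactly (c).

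This bookkeeping establishes (a)$\Leftrightarrow$(c); the same argument with the subscript $1$ replaced by $2$ gives (b)$\Leftrightarrow$(c). Since nothing in the argument is computational beyond applying the bounds and using hereditariness, there is no serious obstacle: the only care required is to keep straight which $\Ext^k_\mathcal{A}$-vanishings are absorbed by the cotorsion pair structure (all the $\Ext^2$'s and the $\id_\omega(\omega)$) and which genuinely encode new information about how $\omega^\wedge$ and $\omega^\vee$ interact with $\omega$ itself (the two $\Ext^1$'s of (c)).
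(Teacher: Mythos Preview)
Your proposal is correct and follows essentially the same approach as the paper: the paragraph preceding the proposition already records that conditions (1)--(4) of Definition~\ref{def:sfbs} and conditions (3), (4), (5) and the $\Ext^2$-halves of (1), (2) in Proposition~\ref{prop:characterization_fbs} are automatic from the hereditary complete cotorsion pair hypotheses, leaving only the two $\Ext^1$-vanishings of (c) as the residual content. Your write-up is in fact slightly more detailed than the paper's, which simply asserts that (1)--(4) of Definition~\ref{def:sfbs} are ``clear'' without spelling out the special preenvelope/precover argument showing $\omega$ is a relative (co)generator.
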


\begin{example}
Let $X$ be a semi-separated noetherian scheme, and $\Qcoh(X)$ denote the category of quasi-coherent sheaves over $X$. Consider the classes $\mfF(X)$ and $\mfGF(X)$ of flat and Gorenstein flat quasi-coherent sheaves, along with their orthogonal complements $\mfC(X) = (\mfF(X))^{\perp_1}$ and $\mathfrak{GC}(X) = (\mfGF(X))^{\perp_1}$, known as cotorsion and Gorenstein cotorsion quasi-coherent sheaves. It is known from Christensen, Estrada and Thompson's work \cite[Thm. 2.2., Lem. 2.3 \& Rmk. 2.4]{CET} that $(\mfGF(X),\mfGC(X))$ and $(\mfF(X),\mfC(X))$ are hereditary complete cotorsion pairs in $\Qcoh(X)$ with 
\[
\mfGF(X) \cap \mfGC(X) = \mfF(X) \cap \mfC(X).
\] 
Then, one has that the pair $[(\mathfrak{GF}(X),\mathfrak{F}(X) \cap \mathfrak{C}(X));(\mathfrak{F}(X) \cap \mathfrak{C}(X),\mathfrak{GC}(X))]$ (or equivalently, $[(\mathfrak{F}(X),\mathfrak{F}(X) \cap \mathfrak{C}(X));(\mathfrak{F}(X) \cap \mathfrak{C}(X),\mathfrak{C}(X))]$) is a strong finite balanced system in $\mathfrak{Qcoh}(X)$ if, and only if, (c) in Proposition \ref{prop:cotorsion_balance} holds for  flat-cotorsion sheaves. This condition is difficult to fulfill in general. Indeed, there are wide classes of rings and schemes over which it is not possible to obtain balanced from the class of flat objects. It is known for instance that if $R$ is a left noetherian but not artinian ring then there is no preenveloping class $\mathcal{Q}$ such that $\Hom_R(-,\sim)$ is right balanced by $(\mathcal{F}(R), \mathcal{Q})$ (see Enochs' \cite[Thm. 4.1]{EnochsBalanceFlat}). Moreover, if $R$ is left noetherian, then $\mathcal{F}(R)$ is the left part of a balanced pair if, and only if, $R$ is left perfect (see \cite[Thm. 5.2]{EPZ}). Concerning schemes \cite[Coroll. 5.3]{EPZ}, for any noetherian and semi-separated scheme $X$, with semi-separating open affine covering $\mathcal{U} = \{U_1, \dots, U_n\}$ such that $\mathcal{O}_X(U_i)$ is a noetherian but not artinian ring for some $i \in \{1, \dots, n\}$, then $\mathfrak{F}(X)$ is not the left part of a balanced pair in $\Qcoh(X)$. It then follows from Propositions \ref{Tfbs} and \ref{prop:cotorsion_balance}, that over such $X$ one has that 
\[
\Ext^1_X(\mathfrak{F}(X) \cap \mathfrak{C}(X),(\mathfrak{F}(X) \cap \mathfrak{C}(X))^\vee) \neq 0
\] 
or 
\[
\Ext^1_X((\mathfrak{F}(X) \cap \mathfrak{C}(X))^\wedge,\mathfrak{F}(X) \cap \mathfrak{C}(X)) \neq 0.
\]
\end{example}


\subsection*{Induced balance in the category of chain complexes}

So far we have mostly presented examples of balanced systems in the category $\Mod(R)$, where the cogenerating class is $\omega = \mcP(R)$ and the generating class is $\nu = \mcI(R)$. In that follows, we explore some cases where this situation is different. 

Let us commence recalling some concepts and results for the category $\Ch(\mcA)$ of chain complexes over $\mcA$. For $n \in \mathbb{Z}$ and a complex $X_\bullet \in \Ch(\mcA)$, let $X_{\bullet}[n]$ denote the \emph{$n$-th suspension complex} defined by $X[n]_{m} := X _{m-n}$ and with differentials $\partial_m^{X_{\bullet}[n]} := (-1)^n d^{m-n}_{X_{\bullet}}$. For each $M \in \mcA$, let $\overline{M}$ denote the \emph{disk complex} 
\[
\overline{M} = \cdots \to 0 \to M \xrightarrow{{\rm id}_M} M \to 0 \to \cdots ,
\] 
with all terms equal to $0$ except $M$ in degrees $0$ and $1$. The \emph{sphere complex} at $M$, on the other hand, is the complex
\[
\underline{M} = \cdots \to 0 \to M \to 0 \to \cdots,
\] 
with all terms equal to $0$ except $M$ in degree $0$. For a pair of complexes $X_{\bullet}, Y_{\bullet} \in \Ch(\mcA)$, we denote by $\Hom_{\Ch}(X_{\bullet},Y_{\bullet})$ the abelian group of morphisms from $X_{\bullet} \to Y_{\bullet}$ in $\Ch(\mcA)$, and by $\Ext^i_{\Ch}(X_{\bullet},Y_{\bullet})$ for $i \in \mathbb{N}^\ast$ the group of $i$-fold extensions. It will be useful to recall from Gillespie's \cite[Lem. 3.1]{GillespieFlat} that there is a natural isomorphism
\begin{align}
\Ext^1_{\mcA}(X_0,M) & \cong \Ext^1_{\Ch}(X_{\bullet},\overline{M}), \label{NatExtCh1} 
\end{align}
for every $X_\bullet \in \Ch(\mcA)$ and $M \in \mcA$. 

Given $\mcX \subseteq \mcA$, we denote by $\widetilde{\mcX}$ the class of exact complexes $X_{\bullet} \in \Ch(\mcA)$ such that $Z_m(X_\bullet) \in \mcX$ for every $m \in \mathbb{Z}$, and by $\Ch(\mcX)$ the class of complexes $X_{\bullet} \in \Ch(\mcA)$ such that $X_m \in \mcX$ for every $m \in \mathbb{Z}$. For example, it is known that $\widetilde{\mcP(\mcA)}$ and $\widetilde{\mcI(\mcA)}$ are the classes of projective and injective complexes over $\mcA$. 

We point out some properties involving the previous notations. See Liang, Ding and Yang's \cite[Lem. 4.1 \& Thm. 4.6]{LiangDingYang} for a proof.

\begin{proposition}\label{rem:complexes} 
Let $\mcX$ and $\omega$ be classes of objects in $\mcA$. Then following assertions hold:
\begin{enumerate}

\item If $\Ext_{\mathcal{A}}^1(\mcX,\mcX) = 0$, then every complex $X_{\bullet} \in \widetilde{\mcX}$ is isomorphic to a coproduct\footnote{In this case, such a coproduct is also a direct product.} of complexes of the form $\overline{X}[n]$ with $X \in \mcX$ and $n \in \mathbb{Z}$. 

\item Suppose that $\mcA$ has enough injective objects, $\mcX$ is closed under extensions and direct summands, and $\omega$ is closed under finite coproducts. Then, $\omega$ is a relative cogenerator in $\mcX$ with $\id_{\mcX}(\omega) = 0$ if, and only if, $\widetilde{\omega}$ is a relative cogenerator in $\Ch(\mcX)$ with $\id_{\Ch(\mcX)}(\widetilde{\omega}) = 0$. 
\end{enumerate}
\end{proposition}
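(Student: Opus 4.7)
The plan is to treat (1) as a splitting result and then leverage it, together with the Ext-adjunction \eqref{NatExtCh1}, to prove both directions of (2).

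For (1), the exactness of $X_\bullet \in \widetilde{\mcX}$ supplies, in each degree $n$, the short exact sequence
\[
Z_{n+1}(X_\bullet) \rightarrowtail X_n \twoheadrightarrow Z_n(X_\bullet),
\]
whose outer terms lie in $\mcX$. The vanishing $\Ext^1_\mcA(\mcX,\mcX) = 0$ splits each of these, and I would assemble the degreewise splittings compatibly with the differentials to identify $X_\bullet$ with $\bigoplus_{n \in \mathbb{Z}} \overline{Z_{n+1}(X_\bullet)}[n]$.

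For (2), direction ``$\Rightarrow$'': given $X_\bullet \in \Ch(\mcX)$, choose for each $n$ a cogenerator embedding $f_n \colon X_n \rightarrowtail W_n$ with $W_n \in \omega$ and $W_n / X_n \in \mcX$. The complex $\widetilde{W}_\bullet := \bigoplus_n \overline{W_n}[n]$ lies in $\widetilde{\omega}$ (only two summands contribute in any given degree, and $\omega$ is closed under finite coproducts), and the formula $\varphi_k(x) := (f_{k-1}(d_k^X x),\, f_k(x))$ defines an injective chain map $X_\bullet \hookrightarrow \widetilde{W}_\bullet$. A snake-lemma inspection in each degree, combined with extension-closure of $\mcX$, places the cokernel in $\Ch(\mcX)$. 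For $\id_{\Ch(\mcX)}(\widetilde{\omega}) = 0$, I would first verify the higher-degree analogue
\[
\Ext^i_{\Ch}(Y_\bullet, \overline{W}[n]) \cong \Ext^i_\mcA(Y_n, W),
\]
which vanishes for $i \geq 1$ by $\id_\mcX(\omega) = 0$, and then handle an arbitrary $\widetilde{W}_\bullet \in \widetilde{\omega}$ by combining part (1) with the contractibility of $\widetilde{W}_\bullet$ (which follows from $\Ext^1_\mcA(\omega,\omega) = 0$ splitting its cycle short exact sequences): every extension $\widetilde{W}_\bullet \rightarrowtail E_\bullet \twoheadrightarrow Y_\bullet$ is degreewise split (since $\Ext^1_\mcA(Y_k,\widetilde{W}_k) = 0$ via the cycle sequence for $\widetilde{W}_k$), and the ``defect'' chain map into $\widetilde{W}_\bullet[-1]$ is null-homotopic, yielding a chain-level splitting.

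For direction ``$\Leftarrow$'': the injective-dimension half is immediate from $\Ext^i_\mcA(X,W) \cong \Ext^i_{\Ch}(\underline{X},\overline{W}) = 0$ for $X \in \mcX$, $W \in \omega$, and $i \geq 1$, using $\underline{X} \in \Ch(\mcX)$ and $\overline{W} \in \widetilde{\omega}$. For the cogenerator property, apply the hypothesis to the stalk $\underline{X} \in \Ch(\mcX)$ to obtain a short exact sequence $\underline{X} \rightarrowtail \widetilde{W}_\bullet \twoheadrightarrow C_\bullet$ in $\Ch(\mcA)$ with $\widetilde{W}_\bullet \in \widetilde{\omega}$ and $C_\bullet \in \Ch(\mcX)$. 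Since $X$ is a $0$-cycle, its image lies inside $W := Z_0(\widetilde{W}_\bullet) \in \omega$, and the snake lemma applied to the degree-$0$ vs.\ degree-$(-1)$ layers of the sequence identifies $W/X$ with $\ker(C_0 \twoheadrightarrow Z_{-1}(\widetilde{W}_\bullet))$. The ambient short exact sequence $Z_0(\widetilde{W}_\bullet) \rightarrowtail \widetilde{W}_0 \twoheadrightarrow Z_{-1}(\widetilde{W}_\bullet)$ splits because $\Ext^1_\mcA(\omega,\omega) = 0$ (a consequence of the just-established $\id$-half), and composing a section with the quotient $\widetilde{W}_0 \twoheadrightarrow C_0$ descends this splitting to $W/X \rightarrowtail C_0 \twoheadrightarrow Z_{-1}(\widetilde{W}_\bullet)$, so $W/X$ is a direct summand of $C_0 \in \mcX$ and lies in $\mcX$ by closure under direct summands. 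The main obstacle is precisely this last step: producing the embedding $X \hookrightarrow W \in \omega$ is immediate from the hypothesis applied to $\underline{X}$, but verifying that $W/X \in \mcX$ requires the snake-lemma identification together with the splitting argument driven by $\Ext^1_\mcA(\omega,\omega) = 0$.
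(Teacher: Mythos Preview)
Your proposal is correct and follows essentially the same route as the reference the paper cites (and the argument the authors had originally written out): part (1) via degreewise splitting, the forward cogenerator in (2) via the embedding $X_m \mapsto (f_m(X_m), f_{m-1}\partial_m(X_m))$ into $\bigoplus_m \overline{W_m}[m]$, and the $\id$-computations via the disk decomposition together with the adjunction \eqref{NatExtCh1} and dimension shifting along injective cosyzygies (which are again disks).

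One place where you work harder than necessary is the converse cogenerator step. Once you have $\underline{X} \rightarrowtail \widetilde{W}_\bullet \twoheadrightarrow C_\bullet$ with $\widetilde{W}_\bullet \in \widetilde{\omega}$ and $C_\bullet \in \Ch(\mcX)$, there is no need to pass to cycles and invoke the splitting of $Z_0(\widetilde{W}_\bullet) \rightarrowtail \widetilde{W}_0 \twoheadrightarrow Z_{-1}(\widetilde{W}_\bullet)$. Since $\Ext^1_{\mcA}(\omega,\omega) = 0$ (from the $\id$-half you just proved, together with $\omega \subseteq \mcX$) and $\omega$ is closed under finite coproducts, every degree of $\widetilde{W}_\bullet$ already lies in $\omega$; reading off degree $0$ directly gives the short exact sequence $X \rightarrowtail \widetilde{W}_0 \twoheadrightarrow C_0$ with $\widetilde{W}_0 \in \omega$ and $C_0 \in \mcX$, and you are done. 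Your argument is not wrong, just longer than it needs to be.
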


The following two results will be very useful to induce a strong finite balanced system in $\Ch(\mcA)$ from a strong finite balanced system in $\mcA$.

\begin{lemma}\label{BalanceComplejo0}
Let $\mcX$ and $\omega$ be classes of objects in $\mcA$ such that $\mcX$ is pointed and closed under extensions, and $\omega$ is a relative quasi-cogenerator in $\mcX$. If $\nu$ is a class of objects in $\mcA$ with $\Ext^1_{\mcA}(\nu,\nu) = \Ext^1_{\mcA}(\omega^{\wedge},\nu) = \Ext^2_{\mcA}(\mcX, \nu) = 0$, then 
\[
\Ext^1_{\Ch}(\Ch(\mcX^{\wedge}),\widetilde{\nu}) = 0.
\] 
In particular, $\Ext^1_{\Ch}(\Ch(\mcX)^{\wedge},\widetilde{\nu}) = 0$.
\end{lemma}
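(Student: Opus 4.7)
The plan is to reduce the chain-complex statement to the object-level vanishing $\Ext^1_\mcA(\mcX^\wedge,\nu)=0$, and then handle the latter via the interplay between the quasi-cogenerator sequences and an Auslander--Buchweitz-type approximation. First, I apply Proposition~\ref{rem:complexes}(1) to the class $\nu$: since $\Ext^1_\mcA(\nu,\nu)=0$, every $N_\bullet\in\widetilde\nu$ decomposes as $N_\bullet\cong\bigoplus_{m\in\mathbb{Z}}\overline{Z_m(N_\bullet)}[m]$ with each $Z_m(N_\bullet)\in\nu$, and by the cited footnote this coproduct is simultaneously a product. Only two summands contribute in each degree, so the product is componentwise exact in $\Ch(\mcA)$ and $\Ext^1_{\Ch}(Y_\bullet,-)$ commutes with it.

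Combining this decomposition with (a shift of) the natural isomorphism \eqref{NatExtCh1}, which gives $\Ext^1_{\Ch}(Y_\bullet,\overline{V}[n])\cong\Ext^1_\mcA(Y_n,V)$, I obtain
\[
\Ext^1_{\Ch}(Y_\bullet,N_\bullet)\cong\prod_{m\in\mathbb{Z}}\Ext^1_\mcA(Y_m,Z_m(N_\bullet)).
\]
Since $Y_m\in\mcX^\wedge$ and $Z_m(N_\bullet)\in\nu$, it suffices to prove $\Ext^1_\mcA(\mcX^\wedge,\nu)=0$. The base case $M\in\mcX$ is immediate from the quasi-cogenerator SES $0\to M\to W\to X'\to 0$ with $W\in\omega\subseteq\omega^\wedge$ and $X'\in\mcX$: the LES segment $\Ext^1_\mcA(W,V)\to\Ext^1_\mcA(M,V)\to\Ext^2_\mcA(X',V)$ has both flanking terms zero by the hypotheses $\Ext^1_\mcA(\omega^\wedge,\nu)=0$ and $\Ext^2_\mcA(\mcX,\nu)=0$.

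For general $M\in\mcX^\wedge$ I intend to upgrade the argument using an AB-type envelope $0\to M\to H\to X\to 0$ with $H\in\omega^\wedge$ and $X\in\mcX$. Such an envelope can be manufactured from an AB-precover $0\to H_0\to X_0\to M\to 0$ by pushing out along a quasi-cogenerator sequence $0\to X_0\to W\to X_0'\to 0$: the cokernel $W/H_0$ fits into $0\to M\to W/H_0\to X_0'\to 0$ and inherits an $\omega$-resolution of length $\resdim_{\omega}(H_0)+1$ (obtained by splicing an $\omega$-resolution of $H_0$ with the composite $W_0\twoheadrightarrow H_0\hookrightarrow W$), so $W/H_0\in\omega^\wedge$. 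The associated LES then forces $\Ext^1_\mcA(M,V)=0$ via the chain $\Ext^1_\mcA(X_0',V)=0$ (previous step), $\Ext^1_\mcA(W/H_0,V)=0$ (hypothesis), and $\Ext^2_\mcA(X_0',V)=0$ (hypothesis). The hard part will be securing the AB-precover under the weak assumption that $\omega$ is merely a \emph{quasi}-cogenerator in $\mcX$: the version recalled in Example~\ref{descLX} via \cite[Thm.~2.8]{BMS} presupposes $\omega\subseteq\mcX$ and $\id_\mcX(\omega)=0$, so either a refinement of that theorem, or a direct construction using iterated quasi-cogenerator sequences along an $\mcX$-resolution of $M$, will be required here.
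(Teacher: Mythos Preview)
Your approach is essentially the same as the paper's: reduce to the degreewise statement $\Ext^1_\mcA(\mcX^\wedge,\nu)=0$ via Proposition~\ref{rem:complexes}(1) and the isomorphism~\eqref{NatExtCh1}, and then obtain that vanishing from an Auslander--Buchweitz type approximation. The paper simply writes ``From \cite[Thm.~2.8]{BMS} we can deduce that $\Ext^1_\mcA(\mcX^\wedge,\nu)=0$'' and moves on; the argument it has in mind is exactly the one appearing in the proof of Proposition~\ref{prop:characterization_fbs}, namely using the short exact sequence $M\rightarrowtail H\twoheadrightarrow X$ with $H\in\omega^\wedge$ and $X\in\mcX$ together with $\Ext^1_\mcA(\omega^\wedge,\nu)=0$ and $\Ext^2_\mcA(\mcX,\nu)=0$.

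Your hesitation at the end is unnecessary. The construction of the approximation $M\rightarrowtail H\twoheadrightarrow X$ in \cite[Thm.~2.8]{BMS} needs only that $\mcX$ is pointed and closed under extensions and that $\omega$ is a relative quasi-cogenerator in $\mcX$; the extra conditions $\omega\subseteq\mcX$ and $\id_\mcX(\omega)=0$ quoted in Example~\ref{descLX} are used there to make the $\mcX$-precover \emph{special}, which is irrelevant here. In fact the ``direct construction using iterated quasi-cogenerator sequences along an $\mcX$-resolution of $M$'' that you propose is precisely the standard inductive proof: your own pushout recipe, started from the trivial precover $0\rightarrowtail M\twoheadrightarrow M$ when $M\in\mcX$ and iterated along a finite $\mcX$-resolution, already produces both approximations. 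So there is no gap---you have all the pieces, and once you drop the caveat your argument is complete and coincides with the paper's.
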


\begin{proof}
From \cite[Thm. 2.8]{BMS}, we can deduce that $\Ext_{\mcA}^1(\mcX^{\wedge}, \nu) = 0$ (see the proof of Proposition \ref{prop:characterization_fbs}). The rest follows by Proposition \ref{rem:complexes}-(1) and \eqref{NatExtCh1}. For the last assertion, note that the containment $\Ch(\mcX)^\wedge \subseteq \Ch(\mcX^\wedge)$ is trivial. 
\end{proof}

\begin{lemma}\label{BalanceComplejo} 
Let $\mcA$ be an abelian category with enough injective objects. Let $\mcX$, $\omega$ and $\nu$ be classes of objects in $\mcA$ satisfying the following conditions:
\begin{enumerate}
\item $\mcX$ is closed under extensions and direct summands.

\item $\omega$ is closed under finite coproducts and a relative cogenerator in $\mcX$.

\item $\Ext^1_{\mcA}(\nu,\nu) = 0$.

\item $\Ext^1_{\mcA}(\omega,\nu^\vee) = 0$. 

\item $\id_{\mcX}(\omega \cup \nu) = 0$. 
\end{enumerate} 
Then $\id_{\Ch(\mcX)}(\widetilde{\nu}^\vee) = 0$.
\end{lemma}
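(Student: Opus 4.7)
\noindent The plan is a four-step bootstrap which lifts hypothesis (4) from $\omega \times \nu^\vee$ in $\mcA$ up to $\Ch(\mcX) \times \widetilde{\nu}^\vee$ in $\Ch(\mcA)$. In outline: first prove $\Ext^{\geq 1}_\mcA(\omega,\nu^\vee) = 0$; next translate vanishings across disc-complex $\Ext$ isomorphisms to obtain $\id_{\Ch(\mcX)}(\widetilde{\nu}) = 0$ and $\Ext^{\geq 1}_{\Ch}(\widetilde{\omega}, \widetilde{\nu}^\vee) = 0$; finally combine these with $\widetilde{\omega}$ serving as a relative cogenerator in $\Ch(\mcX)$.

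For the first step, let $W \in \omega$ (which lies in $\mcX$ by (2)) and let $V \in \nu^\vee$ with right $\nu$-resolution $0 \to V \to V^0 \to \cdots \to V^k \to 0$. Hypothesis (5) gives $\Ext^{\geq 1}_\mcA(W,V^i) = 0$, so splitting the resolution into short exact sequences $0 \to K^i \to V^i \to K^{i+1} \to 0$ (with $K^0 = V$, $K^k = V^k$) and dimension shifting yields $\Ext^{j+1}_\mcA(W,V) \cong \Ext^1_\mcA(W,K^j)$ for all $j \geq 0$. Since every $K^j$ lies in $\nu^\vee$, hypothesis (4) annihilates these; combined with (4) itself, one obtains $\Ext^{\geq 1}_\mcA(\omega,\nu^\vee) = 0$.

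For the translation to $\Ch(\mcA)$, I would first promote \eqref{NatExtCh1} to the degreewise $\Ext$ isomorphism $\Ext^i_{\Ch}(X_\bullet, \overline{V}[n]) \cong \Ext^i_\mcA(X_n, V)$ for all $i \geq 0$: taking an injective coresolution $V \to I^\bullet$ in $\mcA$, the disc functor (exact and sending injectives to injective complexes) produces an injective coresolution $\overline{V}[n] \to \overline{I^0}[n] \to \overline{I^1}[n] \to \cdots$ in $\Ch(\mcA)$ along which $\Hom_{\Ch}(X_\bullet, \overline{I^j}[n])$ identifies with $\Hom_\mcA(X_n, I^j)$. By Proposition \ref{rem:complexes}-(1) applied to $\nu$ (valid by $\Ext^1_\mcA(\nu,\nu) = 0$ in (3)), every $N_\bullet \in \widetilde{\nu}$ is a product $\prod_n \overline{V_n}[n]$ of disks, so this isomorphism together with $\id_\mcX(\nu) = 0$ gives $\id_{\Ch(\mcX)}(\widetilde{\nu}) = 0$. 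The dual computation, evaluating an injective coresolution of $N_\bullet$ in $\Ch(\mcA)$ at degree $n+1$, yields $\Ext^i_{\Ch}(\overline{W}[n], N_\bullet) \cong \Ext^i_\mcA(W, N_{n+1})$. For $N_\bullet \in \widetilde{\nu}^\vee$, a right $\widetilde{\nu}$-coresolution becomes, via the product-of-disks shape of $\widetilde{\nu}$-objects and closure of $\nu$ under finite coproducts, a right $\nu$-coresolution of $N_{n+1}$, so $N_{n+1} \in \nu^\vee$ and the first step gives $\Ext^{\geq 1}_\mcA(W, N_{n+1}) = 0$. Decomposing any $Z_\bullet \in \widetilde{\omega}$ via Proposition \ref{rem:complexes}-(1) applied to $\omega$ (permissible since $\Ext^1_\mcA(\omega,\omega) \subseteq \Ext^1_\mcA(\mcX,\omega) = 0$ by (5)) then extends this to $\Ext^{\geq 1}_{\Ch}(\widetilde{\omega}, \widetilde{\nu}^\vee) = 0$.

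To conclude, Proposition \ref{rem:complexes}-(2) makes $\widetilde{\omega}$ a relative cogenerator in $\Ch(\mcX)$ with $\id_{\Ch(\mcX)}(\widetilde{\omega}) = 0$. For $X_\bullet \in \Ch(\mcX)$ and $N_\bullet \in \widetilde{\nu}^\vee$ with $\coresdim_{\widetilde{\nu}}(N_\bullet) = k$, iterating the $\widetilde{\omega}$-cogenerator short exact sequence $n$ times and using $\Ext^{\geq 1}_{\Ch}(\widetilde{\omega}, \widetilde{\nu}^\vee) = 0$ produces $X_\bullet^{(n)} \in \Ch(\mcX)$ with $\Ext^i_{\Ch}(X_\bullet, N_\bullet) \cong \Ext^{i+n}_{\Ch}(X_\bullet^{(n)}, N_\bullet)$; simultaneously, dimension shifting along the $\widetilde{\nu}$-coresolution of $N_\bullet$ combined with $\id_{\Ch(\mcX)}(\widetilde{\nu}) = 0$ gives $\Ext^{\geq k+1}_{\Ch}(\Ch(\mcX), N_\bullet) = 0$. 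Taking $n$ with $i + n \geq k+1$ forces the conclusion for every $i \geq 1$. The main obstacle I anticipate is coordinating the two dimension-shifting procedures—on the source via $\widetilde{\omega}$, on the target via the $\widetilde{\nu}$-coresolution—so that they jointly reach every positive $\Ext$-degree, together with correctly promoting the $i=1$ natural isomorphism \eqref{NatExtCh1} to all $i \geq 0$ via the disc-preserves-injectives argument and the degreewise-$\nu^\vee$ analysis of $N_{n+1}$.
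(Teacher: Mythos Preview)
Your argument is correct and uses the same toolkit as the paper's proof: the disk decomposition of $\widetilde{\nu}$ and $\widetilde{\omega}$ from Proposition~\ref{rem:complexes}-(1), the promotion of \eqref{NatExtCh1} to higher Ext via injective disk coresolutions, the relative cogenerator $\widetilde{\omega} \subseteq \Ch(\mcX)$ from Proposition~\ref{rem:complexes}-(2), and the degreewise observation that $N_\bullet \in \widetilde{\nu}^\vee$ forces $N_m \in \nu^\vee$.

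The organization, however, differs. The paper runs an induction on $m = \coresdim_{\widetilde{\nu}}(D_\bullet)$: the base case is your $\id_{\Ch(\mcX)}(\widetilde{\nu}) = 0$, and in the inductive step the cases $i \geq 2$ and $i = 1$ are handled separately, the latter by a \emph{single} $\widetilde{\omega}$-cogenerator shift together with only the $\Ext^1$-vanishing $\Ext^1_{\Ch}(\widetilde{\omega}, D_\bullet) = 0$, which hypothesis (4) supplies directly. You instead invest in the preliminary upgrade $\Ext^{\geq 1}_{\mcA}(\omega,\nu^\vee) = 0$ (your Step 1), which lets you establish the full vanishing $\Ext^{\geq 1}_{\Ch}(\widetilde{\omega},\widetilde{\nu}^\vee) = 0$ and then finish by a clean two-sided dimension shift with no case split. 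Your route costs an extra paragraph but yields a more symmetric endgame; the paper's induction is more economical with the hypotheses (it never needs $\Ext^{\geq 2}_{\mcA}(\omega,\nu^\vee) = 0$) at the price of the $i=1$/$i\geq 2$ asymmetry.
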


\begin{proof}
Let $X_\bullet \in \Ch(\mcX)$ and $D_\bullet \in \widetilde{\nu}^\vee$ with $m = \coresdim_{\widetilde{\nu}}(D_\bullet) < \infty$. We prove that $\Ext^{\geq 1}_{\Ch}(X_\bullet,D_\bullet) = 0$ by induction on $m$. The case $m = 0$ can be deduced from Proposition \ref{rem:complexes}-(1), \eqref{NatExtCh1} and the fact that $\mcA$ has enough injectives. Indeed, $D_{\bullet} \in \widetilde{\nu}$ can be written as a direct product of complexes of the form $\overline{V}[n]$ with $V \in \nu$ and $n \in \mathbb{Z}$. It suffices to show that $\Ext^{\geq 1}_{\mcA}(X_\bullet,\overline{V}) = 0$. Note that every injective $i$-th cosyzygy of $\overline{V}$ is a disk $\overline{K}$ with $K$ an injective $i$-th cosyzygy of $V$, and thus
\[
\Ext^{i+1}_{\Ch}(X_\bullet,\overline{V}) \cong \Ext^1_{\Ch}(X_\bullet,\overline{K}) \cong \Ext^1_{\mathcal{A}}(X_0,K) \cong \Ext^{i+1}_{\mcA}(X_0,V) = 0.
\]

Now suppose that $\Ext^{\geq 1}_{\Ch}(X_\bullet,D'_\bullet) = 0$ for every $D'_\bullet \in \widetilde{\nu}^\vee$ with $\coresdim_{\widetilde{\nu}}(D'_\bullet) \leq m-1$. Note that we can form an exact sequence $D_\bullet \rightarrowtail V_\bullet \twoheadrightarrow D'_\bullet$ with $V_\bullet \in \widetilde{\nu}$ and $\coresdim_{\widetilde{\nu}}(D'_\bullet) \leq m-1$. On the one hand, for every $i \in \mathbb{N}^\ast$ we have an exact sequence 
\[
\Ext^i_{\Ch}(X_\bullet,D'_\bullet) \to \Ext^{i+1}_{\Ch}(X_\bullet,D_\bullet) \to \Ext^{i+1}_{\Ch}(X_\bullet,V_\bullet)
\]
where $\Ext^{i+1}_{\Ch}(X_\bullet,V_\bullet) = 0$ by the case $m = 0$, and $\Ext^i_{\Ch}(X_\bullet,D'_\bullet) = 0$ by the induction hypothesis. Thus, $\Ext^{\geq 2}_{\Ch}(X_\bullet,D_\bullet) = 0$. On the other hand, by Proposition \ref{rem:complexes}-(2) we have an exact sequence $X_\bullet \rightarrowtail W_\bullet \twoheadrightarrow X'_\bullet$ with $W_\bullet \in \widetilde{\omega}$ and $X'_\bullet \in \Ch(\mcX)$. Then, there is an exact sequence 
\[
\Ext^1_{\Ch}(W_\bullet,D_\bullet) \to \Ext^1_{\Ch}(X_\bullet,D_\bullet) \to \Ext^2_{\Ch}(X'_\bullet,D_\bullet)
\] 
where $\Ext^2_{\Ch}(X'_\bullet,D_\bullet) = 0$ and $\Ext^1_{\Ch}(W_\bullet,D_\bullet) = 0$. For the latter, note that since $\Ext^1_{\mcA}(\omega,\omega) = 0$, one can write $W_\bullet$ as a coproduct of disk complexes centered at objects in $\omega$. So it suffices to note that $\Ext^1_{\Ch}(\overline{W},D_\bullet) \cong \Ext^1_{\Ch}(W,D_0) = 0$ for every $W \in \omega$, but this follows by the assumption that $\Ext^1_{\mcA}(\omega,\nu^\vee) = 0$. Indeed, $D_0 \in \nu^\vee$ is a consequence of condition (3).
\end{proof}

The following result is a direct consequence of Propositions \ref{Tfbs}, \ref{prop:characterization_fbs} and \ref{rem:complexes}-(2), and Lemmas \ref{BalanceComplejo0} and \ref{BalanceComplejo} (and their duals).

\begin{proposition}[induced strong balanced systems in chain complexes]
Suppose $[(\mcX, \omega);(\nu, \mcY)]$ is a strong finite balanced system in an abelian category $\mcA$ with enough projective and injective objects, such that $\mcX$ and $\mcY$ are closed under direct summands, and $\omega$ and $\nu$ under finite coproducts. Then, $[(\Ch(\mcX),\widetilde{\omega});(\widetilde{\nu},\Ch(\mcY))]$ is a strong finite balanced system in $\Ch(\mcA)$. Furthermore, $(\Ch(\mcX),\Ch(\mcY))$ is a balanced pair with respect to $(\Ch(\mcX)^\wedge,\Ch(\mcY)^\vee)$.
\end{proposition}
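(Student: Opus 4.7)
The plan is to reduce everything to Proposition \ref{prop:characterization_fbs}: one verifies its four background conditions (the first four items of Definition \ref{def:sfbs}) together with its five Ext/dimension conditions, after which Proposition \ref{Tfbs} delivers the balanced-pair statement. I would organize the verification in three layers.

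First, the background conditions. Pointedness and closure under extensions of $\Ch(\mcX)$ and $\Ch(\mcY)$ are degreewise consequences of the corresponding properties of $\mcX$ and $\mcY$. Proposition \ref{rem:complexes}-(2) applied to $(\mcX,\omega)$ gives that $\widetilde{\omega}$ is a relative cogenerator in $\Ch(\mcX)$ with $\id_{\Ch(\mcX)}(\widetilde{\omega})=0$; its dual applied to $(\mcY,\nu)$ gives that $\widetilde{\nu}$ is a relative generator in $\Ch(\mcY)$ with $\pd_{\Ch(\mcY)}(\widetilde{\nu})=0$.

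Second, the Ext-vanishings that come directly from Lemmas \ref{BalanceComplejo0} and \ref{BalanceComplejo}. Since any finite left $\widetilde{\omega}$-resolution is degreewise a finite left $\omega$-resolution, one has $\widetilde{\omega}^\wedge\subseteq\Ch(\mcX^\wedge)$. The hypotheses of Lemma \ref{BalanceComplejo0} (applied with the $\mcX$, $\omega$, $\nu$ of the given system) follow from the strongly finite conditions: $\Ext^1_\mcA(\nu,\nu)=0$ since $\pd_\mcY(\nu)=0$ and $\nu\subseteq\mcY$, while $\Ext^1_\mcA(\omega^\wedge,\nu)=0$ and $\Ext^2_\mcA(\mcX,\nu)=0$ are supplied by Proposition \ref{prop:characterization_fbs} and Remark \ref{rem:sfbs} applied to the ground system. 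One thus obtains $\Ext^1_\Ch(\widetilde{\omega}^\wedge,\widetilde{\nu})=0$, the first half of condition (1) of Proposition \ref{prop:characterization_fbs}. Dually, Lemma \ref{BalanceComplejo} applied to $(\mcX,\omega,\nu)$ gives $\id_{\Ch(\mcX)}(\widetilde{\nu}^\vee)=0$; by $\widetilde{\omega}\subseteq\Ch(\mcX)$ this yields the first half of condition (2), together with conditions (4) and (5) of that proposition.

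Third, the remaining $\Ext^2$-vanishings of Proposition \ref{prop:characterization_fbs} and its condition (3) require the disk decomposition together with the disk-sphere adjunction. By $\Ext^1_\mcA(\nu,\nu)=0$, Proposition \ref{rem:complexes}-(1) writes every $D_\bullet\in\widetilde{\nu}$ as a direct sum (and product) of disks $\overline{V}[n]$ with $V\in\nu$; the adjunction then gives $\Ext^2_\Ch(X_\bullet,\overline{V}[n])\cong\Ext^2_\mcA(X_n,V)=0$ by $\Ext^2_\mcA(\mcX,\nu)=0$, yielding $\Ext^2_\Ch(\Ch(\mcX),\widetilde{\nu})=0$. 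Dually, $W_\bullet\in\widetilde{\omega}$ decomposes via $\Ext^1_\mcA(\omega,\omega)=0$ (from $\omega\subseteq\mcX$ and $\id_\mcX(\omega)=0$), and $\Ext^i_\Ch(\overline{W}[n],Y_\bullet)\cong\Ext^i_\mcA(W,Y_{n+1})$; this delivers $\Ext^2_\Ch(\widetilde{\omega},\Ch(\mcY))=0$ via $\Ext^2_\mcA(\omega,\mcY)=0$, and, for every $i\geq 1$, also $\pd_{\Ch(\mcY)}(\widetilde{\omega})=0$. Dimension shifting along a finite $\widetilde{\omega}$-resolution then extends this to $\pd_{\Ch(\mcY)}(\widetilde{\omega}^\wedge)=0<\infty$, giving condition (3). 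Proposition \ref{Tfbs} closes the balanced-pair assertion. The main technical obstacle is precisely this pair of $\Ext^2$-vanishings, which is not directly furnished by Lemmas \ref{BalanceComplejo0} and \ref{BalanceComplejo}; the workaround is the disk decomposition of $\widetilde{\omega}$ and $\widetilde{\nu}$, which reduces the higher Ext computations in $\Ch(\mcA)$ to the $\Ext^2$-vanishings already known at the level of $\mcA$.
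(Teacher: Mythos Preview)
Your proposal is correct and follows the same strategy as the paper, which simply records the result as a consequence of Proposition~\ref{prop:characterization_fbs}, Proposition~\ref{rem:complexes}-(2), and Lemmas~\ref{BalanceComplejo0} and~\ref{BalanceComplejo} together with their duals. Your first two layers implement exactly this.

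Your third layer, however, is redundant, and your closing remark that the $\Ext^2$-vanishings are ``not directly furnished by Lemmas~\ref{BalanceComplejo0} and~\ref{BalanceComplejo}'' is mistaken. Lemma~\ref{BalanceComplejo} gives $\id_{\Ch(\mcX)}(\widetilde{\nu}^\vee)=0$, i.e.\ $\Ext^{\geq 1}_{\Ch}(\Ch(\mcX),\widetilde{\nu}^\vee)=0$; since $\widetilde{\nu}\subseteq\widetilde{\nu}^\vee$, this already contains $\Ext^2_{\Ch}(\Ch(\mcX),\widetilde{\nu})=0$. The \emph{dual} of Lemma~\ref{BalanceComplejo} (applied to $(\mcY,\nu,\omega)$, using that $\mcA$ has enough projectives) yields $\pd_{\Ch(\mcY)}(\widetilde{\omega}^\wedge)=0$, which simultaneously delivers condition~(3) and $\Ext^2_{\Ch}(\widetilde{\omega},\Ch(\mcY))=0$. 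So the disk-decomposition workaround is not needed; you simply forgot to invoke the dual of Lemma~\ref{BalanceComplejo}. One minor slip: your dimension-shifting sentence claims $\pd_{\Ch(\mcY)}(\widetilde{\omega}^\wedge)=0$, but shifting along a length-$k$ resolution only gives $\leq k$; this is harmless since condition~(3) only requires finiteness.
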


\begin{example}
The balance situations we have mentioned in previous examples concerning Gorenstein homological algebra of modules carry over to chain complexes. From Example \ref{ex:finite_balanced_systems}, we know that $[(\mathcal{GP}(R),\mathcal{P}(R));(\mathcal{I}(R),\mathcal{GI}(R))]$ is a strong finite balanced system in $\Mod(R)$. Then by the previous proposition, we obtain the strong finite balanced system in $\Ch(R)$ given by 
\[
[(\Ch(\mathcal{GP}(R)),\widetilde{\mathcal{P}(R)});(\widetilde{\mathcal{I}(R)},\Ch(\mathcal{GI}(R)))].
\]
On the other hand, it is known from Yang and Liu's \cite[Thm. 2.2 \& Prop. 2.8]{YangLiu11} that over an arbitrary ring $R$, $\Ch(\mathcal{GP}(R))$ and $\Ch(\mathcal{GI}(R))$ are precisely the classes of Gorenstein projective and Gorenstein injective complexes. 

One also has a similar example of balance in $\Ch(R)$ by the classes of Ding projective and Ding injective complexes, since $\Ch(\mathcal{DP}(R))$ and $\Ch(\mathcal{DI}(R))$ coincide with the classes of Ding projective and Ding injective complexes for any ring $R$ (see \cite[Thm. 2.8]{YangDa18} and Gillespie and Iacob's \cite[Thm. 2]{GillespieIacobDingEnvelopes}). For Gorenstein AC-projective and Gorenstein AC-injective complexes, it was proved in \cite[Thm. 3.2 \& 4.13]{BravoGillespieComplexes} that a chain complex $X_\bullet$ is Gorenstein AC-injective (resp., Gorenstein AC-projective) if, and only if, $X_\bullet \in \Ch(\mcGI_{\rm AC}(R))$ (resp., $X_\bullet \in \Ch(\mcGP_{\rm AC}(R))$) and every chain map $A_\bullet \to X_\bullet$ (resp., $X_\bullet \to L_\bullet$) is null homotopic for every absolutely clean complex $A_\bullet$ (resp., for every level complex $L_\bullet$). We are not aware if the null homotopy condition can be removed, as it occurs for Gorenstein and Ding complexes. 
\end{example}


\subsection*{Inner balance over chain complexes}

Given two complexes $C_\bullet, D_\bullet \in \Ch (\mcA)$ over an abelian category $\mcA$, let $\Hin(C_\bullet,D_\bullet)$ denote the complex with entries 
\[
\Hin(C_\bullet,D_\bullet)_m = \prod_{i \in \mathbb{Z}} \Hom_{\mcA}(C_i,D_{i+m})
\]
for every $m \in \mathbb{Z}$, and differentials defined by
\[
\partial_m^{\Hin(C_\bullet,D_\bullet)}(f_\bullet) = (\partial_{m+i}^{D_\bullet} f_i -(-1)^{m} f_{i-1} \partial_i^{C_\bullet}) _{i \in \mathbb{Z}}
\]
for every $f_\bullet \in \Hin(C_\bullet,D_\bullet)_m$. Let $\mcX$ and $\omega$ be classes of objects in $\mcA$, where $\mcX$ closed under extensions and direct summands. If $\mcA$ has enough injective objects, $\omega$ is a relative cogenerator in $\mcX$ and $\id_{\mcX}(\omega) = 0$, then by Proposition \ref{rem:complexes}-(2) and \cite[Thm. 2.8]{BMS}, we have that every $C_\bullet \in \Ch(\mcX)^{\wedge}$ has a special $\Ch(\mcX)$-precover $X_{\bullet} \to C_\bullet$ with kernel in $\widetilde{\omega}^\wedge$. Thus using Lemma \ref{ComparisonT}, for each $D_\bullet \in \Ch(\mcA)$ we can define the $i$-th relative cohomology group (with $i \in \mathbb{Z}$) by 
\begin{align}\label{inner_ext_X}
\underline{\Exin}^i_{\mcX}(C_\bullet, D_\bullet) & := \mathrm{H}^i(\Hin(X_{\bullet}, D_\bullet)).
\end{align}
With dual properties and arguments, provided that $\mcA$ has enough projective objects, for each $D_\bullet \in \Ch(\mcY)^{\vee}$ and $C_\bullet \in \Ch(\mcA)$ we can define the $i$-th relative cohomology group by
\begin{align}\label{inner_ext_Y}
\overline{\Exin}^i_{\mcY}(C_\bullet, D_\bullet) & := \mathrm{H}^i(\Hin(C_\bullet, Y_{\bullet})),
\end{align}
where $D_\bullet \to Y_\bullet$ is a special $\Ch(\mcY)$-preenvelope of $D_\bullet$ with cokernel in $\widetilde{\nu}^\vee$. These definitions of $\underline{\Exin}^i_{\mcX}(-,\sim)$ and $\overline{\Exin}^i_{\mcY}(-,\sim)$ were originally introduced by Di, Lu and Zhao in \cite[Def. 3.4]{DiLuZhao20}. 

In this section, we show an analog for Proposition \ref{CriterioBalance} which involves the ``inner'' extensions functors defined in \eqref{inner_ext_X} and \eqref{inner_ext_Y}, that is, both definitions coincide under certain conditions. More precisely, we shall require that $\mcX$ and $\mcY$ are part of a strong finite balanced system. 

The following lemma can be proven as in Liu's \cite[Lem. 11]{Liu18}, or by using \cite[Lem. 2.1]{GillespieFlat}. Although the arguments there are stated in the category of modules, they carry over to abelian categories.

\begin{lemma} \label{HomTrivial}
Let $X_{\bullet}, L_{\bullet} \in \Ch(\mcA)$. If $\Ext^1_{\Ch}(X_{\bullet}, L_{\bullet}[-i]) = 0$ for every $i \in \mathbb{Z}$, then $\Hin(X_{\bullet}, L_{\bullet})$ is an exact complex.
\end{lemma}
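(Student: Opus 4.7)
The plan is to reduce the exactness of $\Hin(X_\bullet,L_\bullet)$ to the null-homotopy of certain chain maps, which is in turn controlled by $\Ext^{1}$ in the abelian category $\Ch(\mcA)$. First I would unwind the definition of $\Hin$: a degree $m$ element is a family $f=(f_i\colon X_i\to L_{i+m})_{i\in\mathbb{Z}}$, and under the paper's convention $L_\bullet[-m]_i=L_{i+m}$ this is the same as a family of morphisms $X_i\to L_\bullet[-m]_i$. A direct check against the stated differential formula shows that the cycle equation $\partial_m^{\Hin(X_\bullet,L_\bullet)}(f)=0$ is, up to a harmless sign $(-1)^m$, the commutation condition for $f$ to be a chain map $X_\bullet\to L_\bullet[-m]$, while the boundaries in degree $m$ are exactly the chain null-homotopies. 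Thus showing $H^m(\Hin(X_\bullet,L_\bullet))=0$ amounts to showing that every chain map $X_\bullet\to L_\bullet[-m]$ is null-homotopic.

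Given such a chain map $f\colon X_\bullet\to L_\bullet[-m]$, I would form its mapping cone and consider the short exact sequence
\[
0\to L_\bullet[-m]\to \mathrm{cone}(f)\to X_\bullet[1]\to 0
\]
in $\Ch(\mcA)$. This sequence is degreewise split and hence represents an element $[\xi_f]\in\Ext^1_{\Ch}(X_\bullet[1],L_\bullet[-m])$, and it is a standard fact that this extension admits a chain-map section in $\Ch(\mcA)$ if and only if $f$ is chain null-homotopic. Since the shift functor is an autoequivalence of the abelian category $\Ch(\mcA)$, one has
\[
\Ext^1_{\Ch}(X_\bullet[1],L_\bullet[-m])\;\cong\;\Ext^1_{\Ch}(X_\bullet,L_\bullet[-m-1])\;=\;\Ext^1_{\Ch}(X_\bullet,L_\bullet[-i])
\]
with $i=m+1\in\mathbb{Z}$, and this group vanishes by hypothesis. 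Therefore $[\xi_f]=0$, $f$ is null-homotopic, and $H^m(\Hin(X_\bullet,L_\bullet))=0$ for every $m\in\mathbb{Z}$.

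The main obstacle I anticipate is a bookkeeping one: verifying that the sign in the paper's differential for $\Hin$ together with the shift convention $X_\bullet[n]_m=X_{m-n}$ really produces the identification of cycles with chain maps and of boundaries with null-homotopies (the conclusion is robust to these signs, but they must be tracked), and pinning down the equivalence between the splitting of the cone extension inside $\Ch(\mcA)$ and the null-homotopy of $f$, so that the hypothesis is invoked with the correct shift index $i=m+1$.
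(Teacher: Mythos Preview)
Your argument is correct. The paper does not supply its own proof but simply refers to \cite[Lem.~11]{Liu18}, remarking that the module-theoretic argument there carries over to abelian categories; your approach---identifying $H^m(\Hin(X_\bullet,L_\bullet))$ with $\Hom_{K(\mcA)}(X_\bullet,L_\bullet[-m])$ and then using the mapping-cone extension to convert null-homotopy of $f$ into the vanishing of a class in $\Ext^1_{\Ch}(X_\bullet[1],L_\bullet[-m])\cong\Ext^1_{\Ch}(X_\bullet,L_\bullet[-(m+1)])$---is exactly the standard route and is what the cited reference does.
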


The previous lemma, along with Lemma \ref{BalanceComplejo} and the fact that $\widetilde{\nu}^\vee$ is closed under suspensions, implies the following result.

\begin{corollary}\label{CorolarioHomTrivial}
Let $\mcA$ be an abelian category with enough injective objects, and $\mcX$, $\omega$ and $\nu$ be classes of objects in $\mcA$ satisfying the list of conditions in Lemma \ref{BalanceComplejo}. Then, for every $X_{\bullet} \in \Ch (\mcX)$ and $L_{\bullet} \in \widetilde{\nu}^\vee$, the complex $\Hin(X_{\bullet}, L_{\bullet})$ is exact.
\end{corollary}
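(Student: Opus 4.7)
The plan is to invoke Lemma~\ref{HomTrivial} directly, so the whole task reduces to verifying, for the given $X_\bullet\in\Ch(\mcX)$ and $L_\bullet\in\widetilde{\nu}^\vee$, that
\[
\Ext^1_{\Ch}(X_\bullet,L_\bullet[-i])=0\quad\text{for every }i\in\mathbb{Z}.
\]
Once this is established, Lemma~\ref{HomTrivial} immediately gives exactness of $\Hin(X_\bullet,L_\bullet)$.

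The key observation is that the class $\widetilde{\nu}^\vee$ is closed under the shift functor $(-)[-i]$ for every $i\in\mathbb{Z}$. First, $\widetilde{\nu}$ itself is shift-stable: if $V_\bullet\in\widetilde{\nu}$ is an exact complex with $Z_m(V_\bullet)\in\nu$ for all $m$, then $V_\bullet[-i]$ is again exact and, up to the standard reindexing, $Z_m(V_\bullet[-i])=Z_{m+i}(V_\bullet)\in\nu$. Consequently, given a finite coresolution
\[
0\to L_\bullet\to V^0_\bullet\to V^1_\bullet\to\cdots\to V^n_\bullet\to 0
\]
with each $V^k_\bullet\in\widetilde{\nu}$, applying $(-)[-i]$ produces a coresolution of $L_\bullet[-i]$ of the same length by objects of $\widetilde{\nu}$. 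Thus $L_\bullet[-i]\in\widetilde{\nu}^\vee$ for every $i\in\mathbb{Z}$.

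With this closure in hand, the hypotheses of Lemma~\ref{BalanceComplejo} (which are precisely the conditions assumed here) yield $\id_{\Ch(\mcX)}(\widetilde{\nu}^\vee)=0$, so
\[
\Ext^1_{\Ch}(X_\bullet,L_\bullet[-i])=0
\]
for every $i\in\mathbb{Z}$, and Lemma~\ref{HomTrivial} then delivers the exactness of $\Hin(X_\bullet,L_\bullet)$. The main (and essentially only) point requiring care is the shift-stability of $\widetilde{\nu}^\vee$, which is routine; everything else is a direct appeal to the two preceding lemmas.
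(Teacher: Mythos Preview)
Your proof is correct and follows exactly the paper's approach: the paper states that the corollary is a consequence of Lemma~\ref{HomTrivial}, Lemma~\ref{BalanceComplejo}, and the closure of $\widetilde{\nu}^\vee$ under suspensions, and you have spelled out precisely this argument. The only additional content you provide is the routine verification of shift-stability, which the paper simply asserts.
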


\begin{theorem}[relative inner extension bifunctors and balance]
Let $\mcA$ be an abelian category with enough projective and injective objects and consider a strong finite balanced system $[(\mcX , \omega); (\nu , \mcY)]$ in $\mcA$, with $\mcX$ and $\mcY$ closed under direct summands, and $\omega$ and $\nu$ closed under finite coproducts. Then, for every $C_{\bullet} \in \Ch (\mcX)^{\wedge}$, $D_{\bullet} \in \Ch(\mcY)^{\vee}$ and $i \in \mathbb{Z}$, there is a natural isomorphism
\[
\underline{\Exin}^{i}_{\mcX}(C_{\bullet}, D_{\bullet}) \cong \overline{\Exin}^{i}_{\mcY}(C_{\bullet}, D_{\bullet}).
\] 
\end{theorem}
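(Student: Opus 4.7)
The plan is to bridge both inner cohomology groups through a common complex $\Hin(X_\bullet, Y_\bullet)$, where $X_\bullet \twoheadrightarrow C_\bullet$ is the special $\Ch(\mcX)$-precover computing $\underline{\Exin}^i_\mcX$ and $D_\bullet \rightarrowtail Y_\bullet$ the special $\Ch(\mcY)$-preenvelope computing $\overline{\Exin}^i_\mcY$. First, using Proposition \ref{rem:complexes}-(2) and its dual together with the Auslander-Buchweitz approximation invoked immediately before \eqref{inner_ext_X}, I would produce short exact sequences
\[
K_\bullet \rightarrowtail X_\bullet \twoheadrightarrow C_\bullet \qquad \text{and} \qquad D_\bullet \rightarrowtail Y_\bullet \twoheadrightarrow L_\bullet,
\]
with $X_\bullet \in \Ch(\mcX)$, $Y_\bullet \in \Ch(\mcY)$, $K_\bullet \in \widetilde{\omega}^\wedge$ and $L_\bullet \in \widetilde{\nu}^\vee$. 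These are the approximations whose existence underlies the admissibility of the balanced pair $(\Ch(\mcX),\Ch(\mcY))$ furnished by the preceding proposition.

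Next I would apply $\Hin(X_\bullet,-)$ to the second short exact sequence. A degreewise check, using the Ext-vanishing $\Ext^1_\mcA(\mcX, \mcY^\vee) = 0$ that is extracted from the strongly finite balanced system axioms via dimension shifting through the relative generator $\nu$ of $\mcY$, yields a short exact sequence of complexes of abelian groups
\[
0 \to \Hin(X_\bullet, D_\bullet) \to \Hin(X_\bullet, Y_\bullet) \to \Hin(X_\bullet, L_\bullet) \to 0.
\]
By Corollary \ref{CorolarioHomTrivial}, the third term is acyclic (since $X_\bullet \in \Ch(\mcX)$ and $L_\bullet \in \widetilde{\nu}^\vee$), so the induced long exact sequence in cohomology delivers $H^i(\Hin(X_\bullet, D_\bullet)) \cong H^i(\Hin(X_\bullet, Y_\bullet))$ for every $i \in \mathbb{Z}$.

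Symmetrically, applying $\Hin(-, Y_\bullet)$ to the first sequence and using the opposite Ext-vanishing $\Ext^1_\mcA(\mcX^\wedge, \mcY) = 0$ produces the short exact sequence
\[
0 \to \Hin(C_\bullet, Y_\bullet) \to \Hin(X_\bullet, Y_\bullet) \to \Hin(K_\bullet, Y_\bullet) \to 0,
\]
whose rightmost term is acyclic by the dual of Corollary \ref{CorolarioHomTrivial} (here using enough projectives). Hence $H^i(\Hin(C_\bullet, Y_\bullet)) \cong H^i(\Hin(X_\bullet, Y_\bullet))$, and composing the two isomorphisms yields the claimed $\underline{\Exin}^i_\mcX(C_\bullet, D_\bullet) \cong \overline{\Exin}^i_\mcY(C_\bullet, D_\bullet)$. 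Naturality in each variable follows from an application of Lemma \ref{ComparisonT} lifted to $\Ch(\mcA)$, which guarantees that the bridging maps are independent of the particular choices of approximation up to chain homotopy.

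The principal obstacle is to establish the Ext-vanishings $\Ext^1_\mcA(\mcX, \mcY^\vee) = 0$ and its dual, required for the degreewise short exactness of the $\Hin$-sequences. These are not explicit axioms of Definition \ref{def:sfbs} and must be deduced from the base orthogonalities $\Ext^1_\mcA(\omega, \mcY^\vee) = \Ext^1_\mcA(\mcX, \nu^\vee) = 0$, the dimension conditions $\pd_\mcY(\omega) = \id_\mcX(\nu) = 0$, and the relative (co)generator structure of $(\mcX,\omega)$ and $(\nu,\mcY)$, via an Auslander-Buchweitz dimension-shifting argument analogous to the proof of Proposition \ref{prop:characterization_fbs}. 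A secondary subtlety is the verification of naturality, which proceeds by a standard diagram chase using the uniqueness up to chain homotopy in Lemma \ref{ComparisonT}.
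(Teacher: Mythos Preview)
Your overall strategy---bridging through $H^i(\Hin(X_\bullet, Y_\bullet))$ via the two approximation sequences and invoking Corollary \ref{CorolarioHomTrivial} (and its dual) for the acyclicity of the outer terms---matches the paper's. The genuine gap lies exactly where you flag it: the vanishing $\Ext^1_\mcA(\mcX, \mcY^\vee) = 0$ does \emph{not} follow from the strongly finite balanced system axioms and is in fact false in the motivating example. Take $R = k[x]/(x^2)$ with $\mcX = \mcGP(R)$, $\omega = \mcP(R)$, $\nu = \mcI(R)$, $\mcY = \mcGI(R)$; since $R$ is quasi-Frobenius one has $\mcX = \mcY = \mcY^\vee = \Mod(R)$, yet $\Ext^1_R(k,k) \neq 0$. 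Hence the degreewise surjectivity of $\Hin(X_\bullet, \lambda)_m$ cannot be obtained from $\Ext^1_\mcA(X_i, D_{i+m}) = 0$, and the dimension-shifting argument you sketch (analogous to Proposition \ref{prop:characterization_fbs}) cannot produce it either.

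The paper obtains this surjectivity by a two-step factorization instead. Using that $\widetilde{\omega}$ is a relative cogenerator in $\Ch(\mcX)$ (Proposition \ref{rem:complexes}-(2)), one gets a short exact sequence $X_\bullet \rightarrowtail W_\bullet \twoheadrightarrow X''_\bullet$ with $W_\bullet \in \widetilde{\omega}$ and $X''_\bullet \in \Ch(\mcX)$; degreewise this reads $X_i \overset{\alpha_i}\rightarrowtail W_i \twoheadrightarrow X''_i$ with $W_i \in \omega$ and $X''_i \in \mcX$. Now Remark \ref{rem:sfbs} supplies the two vanishings that \emph{are} available: $\id_\omega(\mcY^\vee) = 0$ makes $\Hom_\mcA(W_i, \lambda_{i+m})$ epic (since $D_{i+m} \in \mcY^\vee$), and $\id_\mcX(\nu^\vee) = 0$ makes $\Hom_\mcA(\alpha_i, L_{i+m})$ epic (since $L_{i+m} \in \nu^\vee$). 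A commutative-square chase then forces $\Hom_\mcA(X_i, \lambda_{i+m})$ to be epic without ever asserting $\Ext^1_\mcA(X_i, D_{i+m}) = 0$. The dual step for $\Hin(-, Y_\bullet)$ uses $\pd_\nu(\mcX^\wedge) = 0$ and $\pd_\mcY(\omega^\wedge) = 0$ in the same way. Once you replace your Ext-vanishing claim with this factorization, the remainder of your argument goes through as written.
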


The previous result was originally proved in \cite[Thm. 3.5]{DiLuZhao20} under slightly different hypotheses. Namely, the authors assume $\underline{\Ext}^{\geq 1}_{\omega}(\omega^\wedge,\nu) = 0$ and $\overline{\Ext}^{\geq 1}_{\nu}(\omega,\nu^\vee) = 0$, while we assume instead conditions $\mathsf{(fbs5)}$ and $\mathsf{(fbs6)}$ in Definition \ref{finite-balanced-system}, along with $\pd_{\mcY}(\omega) = 0$ and $\id_{\mcX}(\nu) = 0$.

\begin{proof}
First, let us consider for $D_{\bullet} \in \Ch (\mcY)^{\vee}$ a short exact sequence 
\[
D_{\bullet} \rightarrowtail Y_{\bullet} \stackrel{\lambda_\bullet} \twoheadrightarrow L_{\bullet},
\] 
with $Y_{\bullet} \in \Ch(\mcY)$ and $L_{\bullet} \in \widetilde{\nu}^{\vee}$, by \cite[Thm. 2.8]{BMS}. We show that 
\[
\Hin (X_{\bullet} , \lambda_\bullet) \colon \Hin (X_{\bullet}, Y_{\bullet}) \to \Hin(X_{\bullet}, L_{\bullet})
\] 
is an epimorphism for every $X_{\bullet} \in \Ch (\mcX)$. We know that 
\[
\Hin(X_{\bullet}, \lambda_\bullet)_{m} = \prod_{i \in \mathbb{Z}} \Hom_{\mcA} (X_i, \lambda _{i+m}),
\]
for each $m \in \mathbb{Z}$, since the morphism $\Hin(X_{\bullet}, \lambda_\bullet)_{m}$ is defined by
\[
\Hin(X_{\bullet}, \lambda_\bullet)_{m}(f_\bullet) := (\lambda_{i+m} \circ f_i)_{i \in \mathbb{Z}}
\]
for every $f_\bullet = (f_i)_{i \in \mathbb{Z}} \in \Hin (X_{\bullet}, Y_{\bullet})_m$. Thus, it is enough to show that for each $i \in \mathbb{Z}$ the morphism 
\[
\Hom_{\mcA}(X_i, \lambda _{i+m}) \colon \Hom_{\mcA} (X_i, Y_{i+m}) \to \Hom_{\mcA}(X_i, L_{i+m})
\] 
is an epimorphism, as the product of epimorphisms in $\Mod(\mathbb{Z})$ is again epic. 

From $\id_{\Ch(\mcX)}(\widetilde{\omega}) = 0$ and that $\widetilde{\omega}$ is a relative cogenerator in $\Ch(\mcX)$ (Proposition \ref{rem:complexes}-(2)) there is a short exact sequence $X_{\bullet} \stackrel{\alpha_\bullet}\rightarrowtail W_{\bullet} \twoheadrightarrow X''_{\bullet}$ with $X''_{\bullet} \in \Ch (\mcX)$ and $W_{\bullet} \in \widetilde{\omega}$. Then for each $i \in \mathbb{Z}$, we have the following family of exact sequences
\[
X_i \stackrel{\alpha _{i}} \rightarrowtail W_i \twoheadrightarrow X''_i \;\;\mbox{and}\;\; D_{i+m} \rightarrowtail Y_{i+m} \stackrel{\lambda _{i+m}} \twoheadrightarrow L_{i+m}
\]
where $X_i , X''_i \in \mcX$, $Y_{i+m} \in \mcY$, $W_i \in \omega$, $D_{i+m} \in \mcY^{\vee}$ and $L_{i+m} \in \nu^{\vee}$. For the latter relation $L_{i+m} \in \nu^{\vee}$, note that for $L_\bullet$ there is an exact sequence 
\[
L_\bullet \rightarrowtail V^0_\bullet \to V^1_\bullet \to \cdots \to V^{t-1}_\bullet \twoheadrightarrow V^t_\bullet
\] 
for some $t \in \mathbb{N}$, where each $V^i_\bullet$ belongs to $\widetilde{\nu}$. Since $\Ext^1_{\mcA}(\nu,\nu) = 0$, every complex in $\widetilde{\nu}$ is a direct sum of disk complexes centered at objects in $\nu$. Also, since $\nu$ is closed under finite coproducts, we have that $V^i_m \in \nu$ for every $m \in \mathbb{Z}$. Now consider the following commutative diagram 
\[
\begin{tikzpicture}[description/.style={fill=white,inner sep=2pt}] 
\matrix (m) [matrix of math nodes, row sep=3em, column sep=6.5em,text height=1.25ex, text depth=0.25ex] 
{ 
\Hom_{\mcA}(W_i,Y_{i+m}) & \Hom_{\mcA}(X_i,Y_{i+m}) \\
\Hom_{\mcA}(W_i,L_{i+m}) & \Hom_{\mcA}(X_i,L_{i+m}) \\
}; 
\path[->]
(m-1-1) edge node[above] {\footnotesize$\Hom_{\mcA}(\alpha_i,Y_{i+m})$} (m-1-2) edge node[left] {\footnotesize$\Hom_{\mcA}(W_i,\lambda_{i+m})$} (m-2-1)
(m-1-2) edge node[right] {\footnotesize$\Hom_{\mcA}(X_i,\lambda_{i+m})$} (m-2-2)
(m-2-1) edge node[below] {\footnotesize$\Hom_{\mcA}(\alpha_i,L_{i+m})$} (m-2-2)
;
\end{tikzpicture} 
\]
By Remark \ref{rem:sfbs}, we have that $\id_{\omega}(\mcY^\vee) = \id_{\mcX}(\nu^\vee) = 0$, and so $\Hom_{\mcA}(W_i, \lambda_{i+m})$ and $\Hom_{\mcA}(\alpha_i, L_{i+m})$ are epimorphisms. Hence, so is $\Hom_{\mcA}(X_i, \lambda_{i+m})$.  

So far we have proved that for every $D_{\bullet} \in \Ch(\mcY)^{\vee}$ there is a short exact sequence $D_{\bullet} \rightarrowtail Y_{\bullet} \twoheadrightarrow  L_{\bullet}$, such that $\Hin(X_{\bullet}, D_{\bullet}) \rightarrowtail \Hin (X_{\bullet}, Y_{\bullet}) \twoheadrightarrow \Hin(X_{\bullet}, L_{\bullet})$ is a short exact sequence of complexes for every $X_{\bullet} \in \Ch (\mcX)$. Applying Corollary \ref{CorolarioHomTrivial} to the complex $\Hin(X_{\bullet}, L_{\bullet})$, we conclude that 
\[
\mathrm{H}^{i} (\Hin (X_{\bullet}, D_{\bullet})) \cong \mathrm{H}^{i}(\Hin (X_{\bullet}, Y_{\bullet})), \text{ for every } i \in \mathbb{Z}.
\]
Now let $C_{\bullet} \in \Ch(\mcX)^\wedge$. Using a dual argument, there is a short exact sequence $K_{\bullet} \rightarrowtail X'_{\bullet} \twoheadrightarrow C_{\bullet}$, with $X'_{\bullet} \in \Ch(\mcX)$ and $K_{\bullet} \in \widetilde{\omega}^{\wedge}$, such that the sequence of abelian groups $\Hin(C_{\bullet}, Y_{\bullet}) \rightarrowtail \Hin(X'_{\bullet}, Y_{\bullet}) \twoheadrightarrow \Hin(K_{\bullet},Y_{\bullet})$ is exact, where the last term is an exact complex by the dual of Corollary \ref{CorolarioHomTrivial}. Then, we obtain
\[
\mathrm{H}^{i}(\Hin(C_{\bullet}, Y_{\bullet})) \cong \mathrm{H}^{i}(\Hin(X'_{\bullet}, Y_{\bullet})).
\]
Therefore, $\mathrm{H}^{i}(\Hin(X'_{\bullet}, D_{\bullet})) \cong \mathrm{H}^{i}(\Hin(X'_{\bullet}, Y_{\bullet})) \cong \mathrm{H}^i(\Hin(C_\bullet,Y_\bullet))$, that is, 
\[
\underline{\Exin}^{i}_{\mcX}(C_{\bullet}, D_{\bullet}) \cong \overline{\Exin}^i_{\mcY}(C_\bullet,D_\bullet).
\]
\end{proof}


\subsection*{Virtually Gorenstein balanced pairs}

From Zareh-Khoshchehreh, Asgharzadeh and Divaani-Aazar's \cite[Def. 3.9]{Fatem14}, a commutative noetherian ring $R$ of finite Krull dimension such that $\mcGP(R)^{\perp _1} = {}^{\perp_1} \mcGI(R)$ is called \emph{virtually Gorenstein}. In this work, the authors proved that over a commutative noetherian ring $R$ of finite Krull dimension, the pair $(\mcGP(R), \mcGI(R))$ is balanced with respect to the pair $(\Mod(R), \Mod(R))$ if and only if $R$ is virtually Gorenstein (see \cite[Thm. 3.10]{Fatem14}). In what follows, we show that the conditions of being noetherian and having finite Krull dimension can be replaced by other general conditions in the setting of abelian categories. To that end, let us present some  terminology. Given a class $\mcX$ of objects in an abelian category $\mcA$, the class $\LPRes^{0}(\mcX) \subseteq \LPRes(\mcX)$ is formed by those objects $M \in \LPRes(\mcX)$ for which there exists an exact left $\mcX$-resolution $\varepsilon \colon \mcX_{\bullet}(M) \twoheadrightarrow M$ such that ${\rm Ker}(\varepsilon) \in \mcX^{\perp}$ and $Z_i(\mcX_{\bullet}(M)) \in \mcX^{\perp}$ for every $i \in \mathbb{N}^\ast$. Dually, we have the class $\RPRes^{0}(\mcX) \subseteq \RPRes(\mcX)$.

\begin{theorem}\label{virtual_balance}
Let $\mcA$ be an abelian category with enough projective and injective objects, and $\mcX$ and $\mcY$ be classes of objects in $\mcA$ such that $\mcX^{\perp _1} = {}^{\perp _1}\mcY$. Then, $(\mcX, \mcY)$ is an admissible balanced pair with respect to $(\LPRes^{0} (\mcX), \RPRes^{0} (\mcY))$.
\end{theorem}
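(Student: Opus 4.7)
The plan is to verify the three conditions of Definition~\ref{def:balanced_pair} for $(\mcX,\mcY)$ with respect to $(\LPRes^0(\mcX),\RPRes^0(\mcY))$. The hypothesis $\mcX^{\perp_1}={}^{\perp_1}\mcY$ is self-dual, so the verification for $\RPRes^0(\mcY)$ mirrors the one for $\LPRes^0(\mcX)$; I will focus on the latter, with the right-hand side obtained by a symmetric argument using enough injectives and the inclusion ${}^\perp\mcY\subseteq{}^{\perp_1}\mcY=\mcX^{\perp_1}$.

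For condition (1), pick $X\in\mcX$ and take the stalk resolution $\cdots\to 0\to X\xrightarrow{\mathrm{id}} X\to 0$: it is exact and all of its cycles vanish, hence lie in $\mcX^\perp$, so $X\in\LPRes^0(\mcX)$. The inclusion $\LPRes^0(\mcX)\subseteq\LPRes(\mcX)$ is built into the definition, and dually $\mcY\subseteq\RPRes^0(\mcY)\subseteq\RPRes(\mcY)$.

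For condition (2), fix $M\in\LPRes^0(\mcX)$ and let
\[
\eta\colon \cdots\to X_1\to X_0\xrightarrow{\varepsilon} M\to 0
\]
be an exact left $\mcX$-resolution with cycles $Z_i\in\mcX^\perp$ for every $i\geq 0$, as provided by the definition. Admissibility is automatic from exactness. To show that $\eta$ is simultaneously $\Hom_\mcA(\mcX,-)$-acyclic (i.e.\ proper) and $\Hom_\mcA(-,\mcY)$-acyclic, I invoke Lemma~\ref{ex-xyres} (and its contravariant analogue, which follows by the same arguments), reducing each acyclicity assertion to verifying that every short exact sequence
\[
\eta_i\colon Z_i\rightarrowtail X_i\twoheadrightarrow Z_{i-1}\qquad (i\geq 0,\ Z_{-1}:=M)
\]
has the corresponding acyclicity. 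The $\Hom_\mcA(\mcX,-)$-acyclicity of $\eta_i$ reduces to $\Ext^1_\mcA(\mcX,Z_i)=0$, which is immediate from $Z_i\in\mcX^\perp$. The $\Hom_\mcA(-,\mcY)$-acyclicity of $\eta_i$ reduces to $\Ext^1_\mcA(Z_{i-1},\mcY)=0$; for $i\geq 1$ this follows at once from $Z_{i-1}\in\mcX^\perp\subseteq\mcX^{\perp_1}={}^{\perp_1}\mcY$, i.e.\ the full force of the hypothesis.

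The main obstacle is the case $i=0$ of the second acyclicity, which demands $\Ext^1_\mcA(M,\mcY)=0$. Here the assumption that $\mcA$ has enough projective objects becomes essential. The plan is to fix a projective presentation $K\rightarrowtail P\twoheadrightarrow M$ and pull back $Z_0\rightarrowtail X_0\twoheadrightarrow M$ along it. Projectivity of $P$ splits the pullback and yields a short exact sequence $K\rightarrowtail P\oplus Z_0\twoheadrightarrow X_0$. Comparing the $\Hom_\mcA(-,Y)$ long exact sequences of $Z_0\rightarrowtail X_0\twoheadrightarrow M$, $K\rightarrowtail P\twoheadrightarrow M$, and $K\rightarrowtail P\oplus Z_0\twoheadrightarrow X_0$, and exploiting $\Ext^1_\mcA(Z_0,Y)=0=\Ext^1_\mcA(P,Y)$, one transfers the vanishing granted by $\mcX^{\perp_1}={}^{\perp_1}\mcY$ from the syzygies to $M$ itself. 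Condition (3) is then obtained by the dual argument, closing the verification.
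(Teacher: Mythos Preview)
Your argument is sound through the cases $i\geq 1$, but the crucial case $i=0$ does not close with the projective-presentation approach you sketch. The phrase ``one transfers the vanishing \dots\ to $M$ itself'' is the problem. If it is meant to assert $\Ext^1_{\mcA}(M,\mcY)=0$, that is false in general: over a virtually Gorenstein ring with $\mcX=\mcGP(R)$ and $\mcY=\mcGI(R)$ one has $\LPRes^0(\mcX)=\Mod(R)$ (via special $\mcGP(R)$-precovers), yet certainly not every module lies in ${}^{\perp_1}\mcGI(R)$. If instead you only aim at surjectivity of $\Hom_{\mcA}(X_0,Y)\to\Hom_{\mcA}(Z_0,Y)$, tracing your three long exact sequences shows the construction is circular: writing the inclusion $K\hookrightarrow P\oplus Z_0$ as $k\mapsto (k,-s(k))$ with $s\colon P\to X_0$ a lift of $P\twoheadrightarrow M$, a map $(g,f)\colon P\oplus Z_0\to Y$ descends to $X_0$ iff $g|_K=f\circ s|_K$, and the obstruction to finding such $g$ is exactly the image of $f\circ s|_K$ in $\Ext^1_{\mcA}(M,Y)$, which one checks equals the connecting image $\delta(f)$ from $\eta_0$ --- precisely what you are trying to prove is zero.

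The paper handles $\eta_0$ using enough \emph{injectives}, not projectives. One embeds $Z_0\rightarrowtail E$ with $E\in\mcI(\mcA)$; since $Z_0,E\in\mcX^{\perp}$ and $\mcX^{\perp}$ is closed under cokernels of monomorphisms, the cokernel $K$ lies in $\mcX^{\perp}\subseteq{}^{\perp_1}\mcY$, so $\Hom_{\mcA}(E,Y)\to\Hom_{\mcA}(Z_0,Y)$ is onto. Injectivity of $E$ then extends $Z_0\rightarrowtail E$ along $Z_0\rightarrowtail X_0$ to a map $X_0\to E$, and composing any preimage in $\Hom_{\mcA}(E,Y)$ with $X_0\to E$ gives the desired lift. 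Enough projectives enter only in the dual verification of condition (3).
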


\begin{proof}
Let us take $M \in \LPRes(\mcX)^{0}$, and let $\varepsilon \colon \mcX_{\bullet}(M) \twoheadrightarrow M$ be an exact left $\mcX$-resolution with ${\rm Ker}(\varepsilon) \in \mcX^{\perp}$ and $Z_i(\mcX_{\bullet}(M)) \in \mcX^{\perp}$ for every $i \in \mathbb{N}^\ast$. We show that $\varepsilon \colon\mcX_{\bullet} (M) \twoheadrightarrow M$ is $\Hom _{\mcA}(-,\mcY)$-acyclic. For, it suffices to check that the short exact sequence ${\rm Ker}(\varepsilon) \rightarrowtail X_0  \twoheadrightarrow M$ is $\Hom_{\mcA}(-,\mcY)$-acyclic. Since $\mcA$ has enough injective objects, there exists a short exact sequence ${\rm Ker}(\varepsilon) \rightarrowtail E \twoheadrightarrow K$ with $E \in \mcI(\mcA)$. Moreover, since ${\rm Ker}(\varepsilon), E \in \mcX^{\perp}$, we have that $K \in \mcX^{\perp} \subseteq {^{\perp _1} \mcY}$. Then, the sequence ${\rm Ker}(\varepsilon) \rightarrowtail E \twoheadrightarrow K$ is $\Hom_{\mcA}(-,\mcY)$-acyclic. On the other hand, using the fact that $E$ is injective along with the universal property of cokernels, we can obtain the following commutative diagram with exact rows:
\[
\begin{tikzpicture}[description/.style={fill=white,inner sep=2pt}] 
\matrix (m) [matrix of math nodes, row sep=2em, column sep=1.5em,text height=1.25ex, text depth=0.25ex] 
{ 
{\rm Ker}(\varepsilon) & X_0 & M \\
{\rm Ker}(\varepsilon) & E & K \\
}; 
\path[->]
(m-1-2) edge (m-2-2)
(m-1-3) edge (m-2-3)
;
\path[>->]
(m-1-1) edge (m-1-2)
(m-2-1) edge (m-2-2)
;
\path[->>]
(m-1-2) edge (m-1-3)
(m-2-2) edge (m-2-3)
;
\path[-,font=\scriptsize]
(m-1-1) edge [double, thick, double distance=2pt] (m-2-1)
;
\end{tikzpicture} 
\]
Applying $\Hom_{\mcA}(-,Y)$ with $Y \in \mcY$, we obtain the following commutative diagram
\[
\begin{tikzpicture}[description/.style={fill=white,inner sep=2pt}] 
\matrix (m) [matrix of math nodes, row sep=2em, column sep=1.5em,text height=1.25ex, text depth=0.25ex] 
{ 
\Hom_{\mcA}(K,Y) & \Hom_{\mcA}(E,Y) & \Hom_{\mcA}({\rm Ker}(\varepsilon),Y) \\
\Hom_{\mcA}(M,Y) & \Hom_{\mcA}(X_0,Y) & \Hom_{\mcA}({\rm Ker}(\varepsilon),Y) \\
}; 
\path[->]
(m-1-1) edge (m-2-1)
(m-1-2) edge (m-2-2)
(m-2-2) edge (m-2-3)
;
\path[>->]
(m-1-1) edge (m-1-2)
(m-2-1) edge (m-2-2)
;
\path[->>]
(m-1-2) edge (m-1-3)
;
\path[-,font=\scriptsize]
(m-1-3) edge [double, thick, double distance=2pt] (m-2-3)
;
\end{tikzpicture} 
\]
where the upper row is exact. It follows that the sequence ${\rm Ker}(\varepsilon) \rightarrowtail X_0 \twoheadrightarrow M$ is $\Hom_{\mcA}(-,\mcY)$-acyclic. Condition $\mathsf{(bp3)}$ in Definition \ref{def:balanced_pair} follows dually. 
\end{proof}

Under certain conditions, the relative balanced pair in Theorem \ref{virtual_balance} can be obtained from a  balanced system. The proof of the following result is straightforward.

\begin{proposition}
Let $\mcX$ and $\mcY$ be classes of objects in an abelian category $\mcA$ with enough projective and injective objects, satisfying 
\[
\mcX^{\perp_1} = {}^{\perp_1}\mcY.
\] 
If $\mcX \cap \mcX^{\perp_1}$ is a relative cogenerator in $\mcX$ and ${}^{\perp_1}\mcY \cap \mcY$ is a relative generator in $\mcY$, then 
\[
[(\mcX,\mcX \cap \mcX^{\perp_1});({}^{\perp_1}\mcY \cap \mcY,\mcY)]
\] 
is a balanced system with respect to the couple 
\[
[(\mcX \cap \mcX^{\perp_1},\LPRes^0(\mcX));(\RPRes^0(\mcY),{}^{\perp_1}\mcY \cap \mcY)].
\]
\end{proposition}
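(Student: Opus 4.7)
The plan is to verify, one-by-one, the eight axioms of Definition~\ref{balanced-couple} with the choices $\mcX' := \omega := \mcX\cap\mcX^{\perp_1}$, $\mcY' := \nu := {}^{\perp_1}\mcY\cap\mcY$, $\mcX'' := \LPRes^0(\mcX)$, and $\mcY'' := \RPRes^0(\mcY)$ (reading the $\RPRes$ in the statement as $\RPRes^0$ to preserve the symmetry of the theorem under duality).

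First I would dispatch the routine axioms. For the cospan condition (1), the trivial resolution $\cdots \to 0 \to X \xrightarrow{\mathrm{id}} X$ of any $X \in \mcX$ has cycles equal to $0 \in \mcX^\perp$, placing $\mcX$ inside $\LPRes^0(\mcX)$, while $\mcX\cap\mcX^{\perp_1}\subseteq\mcX\subseteq\LPRes^0(\mcX)$ disposes of the other side; axiom (2) is formally dual. Axioms (3)--(4) are immediate from the definitions: the augmentation $X_0 \twoheadrightarrow M$ of the resolution witnessing $M\in\LPRes^0(\mcX)$ is the required epimorphism for (3), and the one-step truncation of a proper right $\mcY$-resolution stays inside $\RPRes^0(\mcY)$ for (4). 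Axioms (5)--(6) are restatements of the standing hypotheses on $\nu$ and $\omega$.

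The substance lies in the Ext-orthogonalities (7)--(8). Two of the four vanishings are tautological: the inclusion $\mcX\cap\mcX^{\perp_1}\subseteq{}^{\perp_1}\mcY$, supplied by the identity $\mcX^{\perp_1}={}^{\perp_1}\mcY$, immediately gives $\Ext^1_{\mcA}(\omega,\mcY)=0$, and dually ${}^{\perp_1}\mcY\cap\mcY\subseteq\mcX^{\perp_1}$ gives $\Ext^1_{\mcA}(\mcX,\nu)=0$. For the remaining pair I would invoke Theorem~\ref{virtual_balance}, which for each $M\in\LPRes^0(\mcX)$ supplies a $\Hom_{\mcA}(-,\mcY)$-acyclic proper left $\mcX$-resolution of $M$. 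Truncating at the first syzygy produces a short exact sequence $Z_0\rightarrowtail X_0 \twoheadrightarrow M$; for $V\in\nu$ the long exact Ext-sequence
\[
\Hom_{\mcA}(X_0,V)\to\Hom_{\mcA}(Z_0,V)\to\Ext^1_{\mcA}(M,V)\to\Ext^1_{\mcA}(X_0,V)
\]
has its leftmost arrow surjective by the $\Hom(-,V)$-acyclicity of the syzygy sequence (Lemma~\ref{ex-xyres}) and its rightmost term zero because $X_0\in\mcX$ and $V\in\mcX^{\perp_1}$, forcing $\Ext^1_{\mcA}(M,V)=0$. A dual run of the same argument with a $\Hom_{\mcA}(\mcX,-)$-acyclic proper right $\mcY$-resolution of $N\in\RPRes^0(\mcY)$ delivers $\Ext^1_{\mcA}(\omega,N)=0$.

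The principal obstacle is this very last step: the identity $\mcX^{\perp_1}={}^{\perp_1}\mcY$ together with $Z_i\in\mcX^\perp$ does not by itself allow one to lift a morphism $Z_0\to V$ through $X_0$, and a pure dimension-shifting argument stalls because the hypothesized orthogonality is only at degree one. Theorem~\ref{virtual_balance} is precisely the mechanism that converts the syzygy condition into the required $\Hom(-,\mcY)$- and $\Hom(\mcX,-)$-acyclicities; once that conversion is invoked, the verification reduces to routine diagram chasing in the Yoneda Ext long exact sequence.
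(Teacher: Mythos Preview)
Your proposal is correct and matches the paper's intent: the paper merely says the proof is ``straightforward'' and gives no details, but the natural reading—direct verification of the eight axioms of Definition~\ref{balanced-couple}, with the two nontrivial Ext-vanishings $\Ext^1_{\mcA}(\LPRes^0(\mcX),\nu)=0$ and $\Ext^1_{\mcA}(\omega,\RPRes^0(\mcY))=0$ supplied by the $\Hom$-acyclicity established in (the proof of) Theorem~\ref{virtual_balance}—is exactly what you do. Your observation that $\RPRes(\mcY)$ should be read as $\RPRes^0(\mcY)$ is also well taken, since otherwise axiom~(4) need not hold (a merely proper right $\mcY$-resolution does not guarantee a monomorphism $N\rightarrowtail Y^0$).

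One small caveat worth making explicit: your appeal to Theorem~\ref{virtual_balance} imports the standing hypothesis that $\mcA$ has enough injectives and projectives, which the proposition does not state; this is harmless in context (the proposition sits immediately after that theorem and is framed as a companion to it), but strictly speaking your argument for $\Ext^1_{\mcA}(\mcX'',\nu)=0$ uses the injective embedding of $Z_0$ from that proof, and no purely formal argument from $\mcX^{\perp_1}={}^{\perp_1}\mcY$ alone seems to yield this vanishing.
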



\subsection*{Balance over Cohen-Macaulay rings}

Throughout, consider a commutative Cohen-Macaulay ring $R$ with unit. For a fixed $R$-module $C$, the classes of $C$-projective and $C$-injective $R$-modules were defined by Holm and J{\o}rgensen in \cite{HolmJorgensen06} as follows:
\begin{align*}
\mcP_{C}(R) & := \{ C \otimes_{R} P \ : \ P \in \mathcal{P}(R) \} & & \text{and} & \mcI_{C}(R) & := \{ \Hom_{R} (C, I) \ : \ I \in \mathcal{I}(R) \}.
\end{align*}
Based on these classes, they also introduced the classes of $C$-Gorenstein projective and $C$-Gorenstein injective $R$-modules, which will be denoted by $\mathcal{GP}_C(R)$ and $\mathcal{GI}_C(R)$, respectively (see \cite[Def. 2.7]{HolmJorgensen06}). 

Associated to a semidualizing $R$-module $C$, we have the Auslander and Bass classes (see for instance Takahashi and White's \cite[\S 1.8]{TWhite10}). The \textit{Bass class respect to $C$}, denoted $\mcB_{C}(R)$, consists of all $R$-modules $M$ satisfying
\[
\Ext^{\geq 1}_R(C,M) = 0 = \mathrm{Tor}^R_{\geq 1}(C, \Hom_{R}(C, M)),
\]
and such that the natural evaluation map $\nu_{M} \colon C \otimes_R \Hom_{R}(C, M) \to M$ is an isomorphism. Dually, the \textit{Auslander class with respect to $C$}, denoted $\mcA_{C}(R)$, consists of all $R$-modules $M$ satisfying
\[
\mathrm{Tor}_{\geq 1}^R(C,M) = 0 = \Ext^{\geq 1}_R(C, C \otimes_R M),
\]
and such that the natural map $\mu_M \colon M \to \Hom_{R}(C, C \otimes_R M)$ is an isomorphism. With these notions in mind, we give the following example of strong finite balanced system.

\begin{proposition}
Let $R$ be a commutative Cohen-Macaulay ring with a dualizing $R$-module $D$ and a semidualizing $R$-module $C$, and let $C^{\dag} := \Hom_{R}(C,D)$. Then, 
\[
[(\mathcal{GP}_{C}(R),\mathcal{P}_{C}(R));(\mathcal{I}_{C^{\dag}}(R), \mathcal{GI}_{C^{\dag}}(R))]
\] 
is a strong finite balanced system. In particular, $(\mathcal{GP}_{C}(R), \mathcal{GI}_{C^{\dag}}(R))$ is a balanced pair with respect to $(\mathcal{GP}_{C}(R)^\wedge, \mathcal{GI}_{C^{\dag}}(R)^{\vee})$.
\end{proposition}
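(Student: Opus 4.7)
The plan is to verify that the quadruple $[(\mathcal{GP}_{C}(R),\mathcal{P}_{C}(R));(\mathcal{I}_{C^{\dag}}(R),\mathcal{GI}_{C^{\dag}}(R))]$ satisfies all six conditions of Definition \ref{def:sfbs}, from which Proposition \ref{Tfbs} will immediately deliver the asserted balanced pair with respect to $(\mathcal{GP}_{C}(R)^\wedge,\mathcal{GI}_{C^{\dag}}(R)^{\vee})$. I would organise the verification by first disposing of the structural conditions (1)--(4), which are essentially formal consequences of the definitions of $C$-Gorenstein projective and $C^{\dag}$-Gorenstein injective modules, and then treating the four Ext-orthogonality conditions packaged inside (5) and (6) through Foxby equivalence.

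For conditions (1)--(4), I would appeal to standard facts from Holm--J{\o}rgensen and White: both $\mathcal{GP}_C(R)$ and $\mathcal{GI}_{C^{\dag}}(R)$ are pointed and closed under extensions; moreover, the totally acyclic complexes defining these classes show that every $X \in \mathcal{GP}_C(R)$ embeds as $X \rightarrowtail W \twoheadrightarrow X'$ with $W \in \mathcal{P}_C(R)$ and $X' \in \mathcal{GP}_C(R)$, while $\Ext^{\geq 1}_R(\mathcal{GP}_C(R),\mathcal{P}_C(R)) = 0$. This gives that $\mathcal{P}_C(R)$ is a relative cogenerator in $\mathcal{GP}_C(R)$ with $\id_{\mathcal{GP}_C(R)}(\mathcal{P}_C(R)) = 0$. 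The dual statement for $\mathcal{I}_{C^{\dag}}(R) \subseteq \mathcal{GI}_{C^{\dag}}(R)$ is completely analogous.

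The heart of the proof is checking conditions (5) and (6), and the key tool is the Foxby equivalence associated to the CM hypothesis together with the dualizing module $D$: the adjoint pair $(C \otimes_R -, \Hom_R(C,-))$ restricts to quasi-inverse equivalences between the Auslander class $\mathcal{A}_C(R)$ and the Bass class $\mathcal{B}_C(R)$, with an analogous picture for $C^{\dag}$. I would first record the inclusions
\[
\mathcal{GP}_C(R)^\wedge \cup \mathcal{P}_C(R)^\wedge \subseteq \mathcal{A}_C(R), \qquad \mathcal{GI}_{C^{\dag}}(R)^\vee \cup \mathcal{I}_{C^{\dag}}(R)^\vee \subseteq \mathcal{B}_{C^{\dag}}(R),
\]
which follow because both Auslander and Bass classes are resolving/coresolving and each of the generating classes sits inside the appropriate one by construction. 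The derived adjunction then provides the natural isomorphism
\[
\Ext^i_R(M,\Hom_R(C,N)) \cong \Ext^i_R(C \otimes_R M, N)
\]
whenever $\mathrm{Tor}^R_{>0}(C,M) = 0$, and dually for $N \in \mathcal{B}_C(R)$. Combining this with the identification $C \otimes_R \mathcal{GP}(R) = \mathcal{GP}_C(R)$ (and its dual $\Hom_R(C^{\dag},-)$-picture for $\mathcal{GI}_{C^{\dag}}(R)$), every Ext-vanishing required in (5) and (6) translates into a vanishing of the form $\Ext^{\geq 1}_R(A,B) = 0$ where $A$ has finite projective dimension and $B$ is injective, or where $A$ is projective and $B$ has finite injective dimension. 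All four vanishings $\Ext^1_R(\mathcal{GP}_C(R)^\wedge,\mathcal{I}_{C^{\dag}}(R)) = \Ext^1_R(\mathcal{P}_C(R)^\wedge,\mathcal{GI}_{C^{\dag}}(R)) = \Ext^1_R(\mathcal{P}_C(R),\mathcal{GI}_{C^{\dag}}(R)^\vee) = \Ext^1_R(\mathcal{GP}_C(R),\mathcal{I}_{C^{\dag}}(R)^\vee) = 0$, as well as the finiteness conditions $\pd_{\mathcal{GI}_{C^{\dag}}(R)}(\mathcal{P}_C(R)) = \id_{\mathcal{GP}_C(R)}(\mathcal{I}_{C^{\dag}}(R)) = 0$ completing conditions (5) and (6), follow by the same mechanism.

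The main obstacle I anticipate is bookkeeping: one has to confirm, for each of the resolution and coresolution closures appearing above, that the class genuinely lies inside the correct Auslander or Bass class so that the Foxby isomorphism applies; this is exactly where the CM hypothesis on $R$, the dualizing property of $D$, and the identity $C^{\dag} = \Hom_R(C,D)$ jointly do the work. Once these inclusions are in place, the Foxby--adjunction isomorphisms reduce every assertion to a standard absolute Ext vanishing, all six conditions of Definition \ref{def:sfbs} hold, and Proposition \ref{Tfbs} finishes the proof.
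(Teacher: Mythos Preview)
Your overall strategy matches the paper's: verify conditions (1)--(4) of Definition~\ref{def:sfbs} from standard facts about $C$-Gorenstein projectives, then handle the Ext-orthogonality via Auslander/Bass classes. However, the bookkeeping you flagged as the main obstacle has indeed gone wrong. The inclusions you record,
\[
\mathcal{GP}_C(R)^\wedge \cup \mathcal{P}_C(R)^\wedge \subseteq \mathcal{A}_C(R), \qquad \mathcal{GI}_{C^{\dag}}(R)^\vee \cup \mathcal{I}_{C^{\dag}}(R)^\vee \subseteq \mathcal{B}_{C^{\dag}}(R),
\]
are false in general: already $C = C \otimes_R R \in \mathcal{P}_C(R)$ lies in $\mathcal{B}_C(R)$, not $\mathcal{A}_C(R)$, and dually $\Hom_R(C^{\dag},I) \in \mathcal{A}_{C^{\dag}}(R)$, not $\mathcal{B}_{C^{\dag}}(R)$. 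The paper places everything in the \emph{same} class $\mathcal{A}_{C^{\dag}}(R)$: it uses $\mathcal{P}_C(R)^\wedge \subseteq \mathcal{A}_{C^{\dag}}(R)$ (from \cite[Lem.~6.14]{SeanWhite08}, relying on $D \in \mathcal{B}_C(R)$), $\mathcal{GP}_C(R) \subseteq \mathcal{A}_{C^{\dag}}(R)$ (from \cite[Thm.~4.6]{HolmJorgensen06}), and $\mathcal{I}_{C^{\dag}}(R)^\vee \subseteq \mathcal{A}_{C^{\dag}}(R)$ (from \cite[Coroll.~6.1]{HolmWhite07}). Once both arguments of $\Ext$ live in $\mathcal{A}_{C^{\dag}}(R)$, the paper invokes \cite[Coroll.~4.2]{TWhite10}, which gives $\Ext^i_R(M,N) \cong \overline{\Ext}^i_{\mathcal{I}_{C^{\dag}}(R)}(M,N)$ directly, so the vanishing is immediate rather than passing through a derived tensor-Hom adjunction and the identification $\mathcal{GP}_C(R) = C \otimes_R \mathcal{GP}(R)$ (which, incidentally, is not the Holm--J{\o}rgensen definition).

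A secondary difference: the paper does not verify conditions (5) and (6) of Definition~\ref{def:sfbs} directly but instead checks conditions (1)--(5) of the characterization in Proposition~\ref{prop:characterization_fbs}, which streamlines the argument slightly. Your plan is salvageable once the correct Auslander/Bass classes are substituted, but as written the key inclusions do not hold.
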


\begin{proof}
Let us check first conditions from $\mathsf{(fbs1)}$ to $\mathsf{(fbs4)}$ in Definition \ref{finite-balanced-system}. The closure under extensions of $\mathcal{GP}_C(R)$ was proved by White in \cite[Thm. 2.8]{White10}, who also showed, along with Sather-Wagstaff and Sharif in \cite[Fact 2.4]{SeanWhite08}, that $\mathcal{P}_C(R)$ is a relative cogenerator in $\mathcal{GP}_{C}(R)$. Moreover, $\id_{\mathcal{GP}_{C}(R)}(\mathcal{P}_C(R)) = 0$ by \cite[Prop. 2.2]{White10}. Thus, conditions $\mathsf{(fbs1)}$ and $\mathsf{(fbs3)}$ hold. Dually, one also has $\mathsf{(fbs2)}$ and $\mathsf{(fbs4)}$, since $C^{\dag}$ is semidualizing by \cite[Not. 3.1]{SeanWhite10}.

In order to conclude the proof, let us now check conditions (1) to (5) in Proposition \ref{prop:characterization_fbs}. We first verify that $\Ext^{\geq 1}_R(\mathcal{P}_C(R)^\wedge,\mathcal{I}_{C^\dag}(R)) = 0$. So let $M \in \mathcal{P}_C(R)^\wedge$ and $\Hom_{R}(C^\dag,I) \in \mathcal{I}_{C^\dag}(R)$ (that is, $I \in \mathcal{I}(R)$). Note that $D \in \mathcal{B}_C(R)$ since $D$ has finite injective dimension (see \cite[Fact 2.8]{SeanWhite08}). Then we can apply \cite[Lem. 6.14]{SeanWhite08} that asserts that $\mathcal{P}_{C}(R)^\wedge \subseteq \mathcal{A}_{C^\dag}(R)$. On the other hand, by Holm and White's \cite[Coroll. 6.1]{HolmWhite07} we have that $\mathcal{I}_{C^\dag}(R) \subseteq \mathcal{I}_{C^\dag}(R)^\vee \subseteq \mathcal{A}_{C^\dag}(R)$. Then,
\[
\Ext^i_{R}(M,\Hom_R(C^\dag,I)) \cong \overline{\Ext}^i_{\mathcal{I}_{C^\dag}(R)}(M,\Hom_R(C^\dag,I)) = 0,
\]
for every $i \in \mathbb{N}^\ast$, where the isomorphism results from \cite[Coroll. 4.2]{TWhite10}. Dually, we have $\Ext^{\geq 1}_R(\mathcal{P}_C(R),\mathcal{I}_{C^{\dag}}(R)^\vee) = 0$. 

Now let us show that $\Ext^{\geq 1}_R(\mathcal{GP}_{C}(R),\mathcal{I}_{C^{\dag}}(R)) = 0$. By \cite[Thm. 4.6]{HolmJorgensen06}, we have the containment $\mathcal{GP}_{C}(R) \subseteq \mathcal{A}_{C^{\dag}}(R)$. We also know that $\mathcal{I}_{C^\dag}(R) \subseteq \mathcal{A}_{C^\dag}(R)$. Then, the result follows again by \cite[Coroll. 4.2]{TWhite10}, while $\Ext^{\geq 1}_R(\mathcal{P}_C(R),\mathcal{GI}_{C^{\dag}}(R)) = 0$ is dual. The latter implies that $\pd_{\mathcal{GI}_{C^\dag}(R)}(M) = 0$ and $\id_{\mathcal{GP}_{C}(R)}(N) = 0$ for every $M \in \mathcal{P}_C(R)$ and $N \in \mathcal{I}_{C^\dag}(R)$. Using a dimension shifting argument, we can deduce that $\pd_{\mathcal{GI}_{C^\dag}(R)}(M) < \infty$ and $\id_{\mathcal{GP}_{C}(R)}(N) < \infty$ for every $M \in \mathcal{P}_C(R)^\wedge$ and $N \in \mathcal{I}_{C^\dag}(R)^\vee$. Moreover, in particular we have $\id_{\mathcal{P}_C(R)}(\mathcal{I}_{C^{\dag}}(R)) = 0$, since $\mathcal{P}_C(R) \subseteq \mathcal{GP}_C(R)$. 
\end{proof}

The previous proposition, along with Proposition \ref{CriterioBalance} implies that 
\begin{align}
\underline{\Ext}^i_{\mathcal{GP}_C(R)}(M,N) & \cong \overline{\Ext}^i_{\mathcal{GI}_{C^\dag}(R)}(M,N).
\end{align}
for every $M \in \mathcal{GP}_{C}(R)^\wedge$ and $N \in \mathcal{GI}_{C^\dag}(R)^\vee$, getting thus another proof of \cite[Thm. 5.7]{SeanWhite10} (also by Sather-Wagstaff, Sharif and White).




\section*{\textbf{Funding}}

The authors thank Project PAPIIT-Universidad Nacional Aut\'onoma de M\'exico IN100124. The first author was partially supported by a postdoctoral fellowship from Programa de Desarrollo de las Ciencias B\'asicas (PEDECIBA). The third author was partially supported by the following grants and institutions: Fondo Vaz Ferreira \# II/FVF/2019/135 (funds are given by the Direcci\'on Nacional de Innovaci\'on, Ciencia y Tecnolog\'ia - Ministerio de Educaci\'on y Cultura, and administered through Fundaci\'on Julio Ricaldoni), Agencia Nacional de Investigaci\'on e Innovaci\'on (ANII), and PEDECIBA.


\bibliographystyle{plain}
\bibliography{biblio_balance_system}

\begin{thebibliography}{10}

\bibitem{AvMar02}
L.~L. Avramov and A.~Martsinkovsky.
\newblock Absolute, relative, and {T}ate cohomology of modules of finite
  {G}orenstein dimension.
\newblock {\em Proc. London Math. Soc. (3)}, 85(2):393--440, 2002.

\bibitem{BMS}
V.~{Becerril}, O.~{Mendoza}, and V.~{Santiago}.
\newblock {Relative Gorenstein objects in abelian categories}.
\newblock {\em {Commun. Algebra}}, 49(1):352--402, 2021.

\bibitem{BeligiannisReiten}
A.~{Beligiannis} and I.~{Reiten}.
\newblock {\em {Homological and homotopical aspects of torsion theories}},
  volume 883.
\newblock Providence, RI: American Mathematical Society (AMS), 2007.

\bibitem{BravoGillespieComplexes}
D.~Bravo and J.~Gillespie.
\newblock Absolutely clean, level, and {Gorenstein} {AC}-injective complexes.
\newblock {\em Commun. Algebra}, 44(5):2213--2233, 2016.

\bibitem{BravoGillespieHovey}
D.~Bravo, J.~Gillespie, and M.~Hovey.
\newblock The stable module category of a general ring.
\newblock {\em Preprint. arXiv:1405.5768}, 2014.

\bibitem{Chen}
X.-W. Chen.
\newblock Homotopy equivalences induced by balanced pairs.
\newblock {\em J. Algebra}, 324(10):2718--2731, 2010.

\bibitem{CET}
L.~W. {Christensen}, S.~{Estrada}, and P.~{Thompson}.
\newblock {The stable category of Gorenstein flat sheaves on a Noetherian
  scheme}.
\newblock {\em {Proc. Am. Math. Soc.}}, 149(2):525--538, 2021.

\bibitem{DiLuZhao20}
Z.~X. {Di}, B.~{Lu}, and J.~X. {Zhao}.
\newblock {Balance for relative cohomology of complexes}.
\newblock {\em {J. Algebra Appl.}}, 19(1):19, 2020.
\newblock Id/No 2050005.

\bibitem{EnochsBalanceFlat}
E.~E. Enochs.
\newblock Balance with flat objects.
\newblock {\em J. Pure Appl. Algebra}, 219(3):488--493, 2015.

\bibitem{EnochsJenda95}
E.~E. Enochs and O.~M.~G. Jenda.
\newblock Gorenstein balance of {Hom} and tensor.
\newblock {\em Tsukuba J. Math.}, 19(1):1--13, 1995.

\bibitem{EnJen00}
E.~E. Enochs and O.~M.~G. Jenda.
\newblock {\em Relative Homological Algebra}, volume~30 of {\em De Gruyter
  Expositions in Mathematics}.
\newblock Walter de Gruyter \& Co., Berlin, 2000.

\bibitem{EstradaIacobYeomans}
S.~Estrada, A.~Iacob, and K.~Yeomans.
\newblock Gorenstein projective precovers.
\newblock {\em Mediterr. J. Math.}, 14(1):10, 2017.
\newblock Id/No 33.

\bibitem{EPZ}
S.~Estrada, M.~A. P\'{e}rez, and H.~Zhu.
\newblock Balanced pairs, cotorsion triplets and quiver representations.
\newblock {\em Proc. Edinb. Math. Soc. (2)}, 63(1):67--90, 2020.

\bibitem{GillespieFlat}
J.~Gillespie.
\newblock The flat model structure on {${\rm Ch}(R)$}.
\newblock {\em Trans. Amer. Math. Soc.}, 356(8):3369--3390, 2004.

\bibitem{GillespieAC}
J.~{Gillespie}.
\newblock {AC-Gorenstein rings and their stable module categories}.
\newblock {\em {J. Aust. Math. Soc.}}, 107(2):181--198, 2019.

\bibitem{GillespieDualityStable}
J.~Gillespie.
\newblock Duality pairs and stable module categories.
\newblock {\em J. Pure Appl. Algebra}, 223(8):3425--3435, 2019.

\bibitem{GillespieIacobDuality}
J.~Gillespie and A.~Iacob.
\newblock Duality pairs, generalized {Gorenstein} modules, and {Ding} injective
  envelopes.
\newblock {\em C. R., Math., Acad. Sci. Paris}, 360:381--398, 2022.

\bibitem{GillespieIacobDingEnvelopes}
J.~Gillespie and A.~Iacob.
\newblock Ding injective envelopes in the category of complexes.
\newblock {\em Rend. Circ. Mat. Palermo (2)}, 72(2):997--1004, 2023.

\bibitem{Holm05}
H.~Holm.
\newblock Gorenstein derived functors.
\newblock {\em Proc. Amer. Math. Soc.}, 132(7):1913--1923, 2004.

\bibitem{Holm}
H.~Holm.
\newblock Gorenstein homological dimensions.
\newblock {\em J. Pure Appl. Algebra}, 189(1-3):167--193, 2004.

\bibitem{HolmJorgensen06}
H.~Holm and P.~J{\o}rgensen.
\newblock Semi-dualizing modules and related {G}orenstein homological
  dimensions.
\newblock {\em J. Pure Appl. Algebra}, 205(2):423--445, 2006.

\bibitem{HolmWhite07}
H.~Holm and D.~White.
\newblock Foxby equivalence over associative rings.
\newblock {\em J. Math. Kyoto Univ.}, 47(4):781--808, 2007.

\bibitem{IacobCoresolved}
A.~Iacob.
\newblock Projectively coresolved {Gorenstein} flat and {Ding} projective
  modules.
\newblock {\em Commun. Algebra}, 48(7):2883--2893, 2020.

\bibitem{LiangDingYang}
L.~Liang, N.~Q. Ding, and G.~Yang.
\newblock Some remarks on projective generators and injective cogenerators.
\newblock {\em Acta Math. Sin. (Engl. Ser.)}, 30(12):2063--2078, 2014.

\bibitem{Liu18}
Z.~K. Liu.
\newblock Relative cohomology of complexes.
\newblock {\em J. Algebra}, 502:79--97, 2018.

\bibitem{SarochStovicek}
J.~{\v{S}}aroch and J.~{\v{S}}t'ov{\'{\i}}{\v{c}}ek.
\newblock Singular compactness and definability for {{\(\Sigma\)}}-cotorsion
  and {Gorenstein} modules.
\newblock {\em Sel. Math., New Ser.}, 26(2):40, 2020.
\newblock Id/No 23.

\bibitem{SeanWhite08}
S.~Sather-Wagstaff, T.~Sharif, and D.~White.
\newblock Gorenstein cohomology in abelian categories.
\newblock {\em J. Math. Kyoto Univ.}, 48(3):571--596, 2008.

\bibitem{SeanWhite10}
S.~Sather-Wagstaff, T.~Sharif, and D.~White.
\newblock Comparison of relative cohomology theories with respect to
  semidualizing modules.
\newblock {\em Math. Z.}, 264(3):571--600, 2010.

\bibitem{TWhite10}
R.~Takahashi and D.~White.
\newblock Homological aspects of semidualizing modules.
\newblock {\em Math. Scand.}, 106(1):5--22, 2010.

\bibitem{White10}
D.~White.
\newblock Gorenstein projective dimension with respect to a semidualizing
  module.
\newblock {\em J. Commut. Algebra}, 2(1):111--137, 2010.

\bibitem{Yang12}
G.~Yang.
\newblock Homological properties of modules over {D}ing-{C}hen rings.
\newblock {\em J. Korean Math. Soc.}, 49(1):31--47, 2012.

\bibitem{YangDa18}
G.~{Yang} and X.~S. {Da}.
\newblock {On Ding projective complexes}.
\newblock {\em {Acta Math. Sin., Engl. Ser.}}, 34(11):1718--1730, 2018.

\bibitem{YangLiu11}
X.~Y. {Yang} and Z.~K. {Liu}.
\newblock {Gorenstein projective, injective, and flat complexes.}
\newblock {\em {Commun. Algebra}}, 39(5):1705--1721, 2011.

\bibitem{Fatem14}
F.~Zareh-Khoshchehreh, M.~Asgharzadeh, and K.~Divaani-Aazar.
\newblock Gorenstein homology, relative pure homology and virtually
  {G}orenstein rings.
\newblock {\em J. Pure Appl. Algebra}, 218(12):2356--2366, 2014.

\end{thebibliography}

\end{document}